\documentclass[11pt]{amsart}
\usepackage[
  letterpaper,
  left=1.7in,
  right=1.7in,
  top=1.1in,
  bottom=1.1in
]{geometry}

\title{Three-manifolds that do not admit veering triangulations}
\author{\textsc{Layne Hall}
}
\email{layne.hall@warwick.ac.uk}

\usepackage{amssymb}
\usepackage{amsmath}
\usepackage{amsthm}
\usepackage{mathtools}
\usepackage[font={small}]{caption}
\usepackage{subcaption}
\usepackage{algorithmicx}
\usepackage[noend]{algpseudocode}
\usepackage{algorithm}
\usepackage{adjustbox} 
\usepackage{booktabs} 
\usepackage{enumerate}

\usepackage[hidelinks]{hyperref}
\RequirePackage{xcolor}
\definecolor{link}{RGB}{152, 90, 191}
\definecolor{cite}{RGB}{73, 184, 110} 
\definecolor{url}{RGB}{230, 163, 122} 
\usepackage[hidelinks]{hyperref}
\hypersetup{
    colorlinks=true,
    linkcolor=link,
    citecolor=cite,
    urlcolor=url
    }
\usepackage{enumerate}
\usepackage{enumitem}
\setlist[enumerate,1]{label=\arabic*.,ref=\arabic*}
\usepackage{textcomp}
\usepackage{titlesec}
\usepackage{nameref}
\usepackage{lipsum}
\usepackage[pagewise]{lineno} 
\usepackage{xcolor, soul} 
\sethlcolor{green} 

\usepackage[final]{microtype} 
\usepackage[capitalise]{cleveref}

\theoremstyle{plain}

\usepackage{thmtools}
\newtheorem{thm}{Theorem}[section]
\newtheorem{prop}[thm]{Proposition}
\newtheorem{lemma}[thm]{Lemma}

\newtheorem{claim}[thm]{Claim}

\newtheorem{fact}[thm]{Fact}

\makeatletter
\let\c@algorithm\relax 
\makeatother

\usepackage{aliascnt}
\newaliascnt{algorithm}{thm} 

\theoremstyle{definition}
\newtheorem{algo/}[thm]{Algorithm}
\newtheorem{con/}[thm]{Construction}
\newtheorem{conj/}[thm]{Conjecture}
\newtheorem{question/}[thm]{Question}
\newtheorem{example/}[thm]{Example}
\newtheorem{rem/}[thm]{Remark}
\newtheorem{defn/}[thm]{Definition}
\newtheorem{observation/}[thm]{Remark}
\newtheorem{notation/}[thm]{Notation}
\newtheorem{convention/}[thm]{Convention}
\newtheorem{move/}[thm]{Move}
\newtheorem{process/}[thm]{Process}
\newtheorem{procedure/}[thm]{Procedure}
\newtheorem{sketch/}[thm]{Sketch}
\newtheorem{algothm/}[thm]{Algorithm}
\newtheorem{obs/}[thm]{Observation}

\usepackage{graphicx}

\newenvironment{defn}
  {%
   \pushQED{\qed}\begin{defn/}}
  {\popQED\end{defn/}}

\newenvironment{example}
  {%
   \pushQED{\qed}\begin{example/}}
  {\popQED\end{example/}}

\newenvironment{rem}
  {%
   \pushQED{\qed}\begin{rem/}}
  {\popQED\end{rem/}}

\newenvironment{con}
  {%
   \pushQED{\qed}\begin{con/}}
  {\popQED\end{con/}}

\newcommand{\cal}[1]{\mathcal{#1}}
\newcommand{\lift}[1]{\widetilde{#1}}

\newcommand{\cusps}{\Delta}

\DeclareMathOperator{\windingorder}{<_{\gw}}

\DeclareMathOperator{\bbZ}{\mathbb{Z}}
\DeclareMathOperator{\gw}{\Theta}

\numberwithin{algorithm}{section}
\numberwithin{equation}{section}

\titleformat{\title}
{\normalfont\scshape\centering}{\title}{1em}{}
\titleformat{\section}
{\normalfont\scshape\centering}{\thesection}{1em}{}
\titleformat{\subsection}
{\normalfont\bfseries}{\thesubsection}{1 em}{}

\usepackage{graphicx}
\graphicspath{ {./figures/} }

\usepackage[
backend=biber,
style=alphabetic,
citestyle=alphabetic,
maxbibnames=99
]{biblatex}
\DeclareMultiCiteCommand{\parencites}[\mkbibbrackets]
  {\parencite}{\addsemicolon\space}
  
\theoremstyle{plain}
\newtheorem*{thm:censusnoveering}{\Cref{thm:census_no_veering}}
\newtheorem*{thm:smoothcriterion}{\Cref{thm:smoothcriterion}}
\newtheorem*{thm:infiniteegs}{\Cref{thm:infiniteegs}}
\newtheorem*{thm:totalorder}{\Cref{thm:totalorder}}
\newtheorem*{thm:biordercensus}{\Cref{thm:biordercensus}}

\begin{document}
\begin{abstract}
    We show that if a three-manifold $M$ admits a veering triangulation, then the dual spine of every essential triangulation of $M$ can be smoothed to a branched surface. We use this criterion to identify a large collection of cusped hyperbolic three-manifolds that do not admit veering triangulations. We then use the same criterion to construct the first infinite family of cusped hyperbolic three-manifolds without veering triangulations. 
\end{abstract}

\maketitle

\markboth{\textsc{Layne Hall}}{\textsc{Obstructing with branchings}}

\section{Introduction}

A veering triangulation of a three-manifold is a rigid combinatorial object that closely reflects the structure of the manifold \cite{fg-angle-veering, ss-link, landry-minsky-taylor, parlak-taut-alex, agol-tsang}. A natural problem is to understand which manifolds admit such triangulations. Large-scale surveys suggest that cusped hyperbolic three-manifolds that admit a veering triangulation are rare \cite{veering-census, shana_10_tets}. However, only a handful of these manifolds are known not to admit a veering triangulation \cite{misha-dunfield-egs,ss-veering-finiteness}. We give the first local criterion, \emph{branchability}, that obstructs the presence of veering triangulations.

\begin{thm:smoothcriterion}
    Suppose that $M$ admits a veering triangulation. Let $\cal{T}$ be an essential ideal triangulation of $M$. Then $\cal{T}$ is branchable. 
\end{thm:smoothcriterion}

Thus, if $M$ admits a single essential triangulation that is not branchable, then $M$ does not admit a veering triangulation. The property of being branchable is a simple combinatorial condition: the dual spine of the triangulation can be smoothed to give a branched surface in $M$.

Schmalian gave the first few examples of three-manifolds that do not admit veering triangulations \cite{misha-dunfield-egs}. Given an oriented cusped hyperbolic three-manifold $M$, they consider every double cover of $M$. They then compare fillings of these covers to Dunfield's computed data for closed manifolds that do not admit cooriented taut foliations \cite{dunfield-foliar}. In particular, the test requires a large amount of data on closed manifolds.

Schleimer-Segerman recently gave a means of counting veering triangulations on certain manifolds \cite{ss-veering-finiteness}. However, it also only rules out veering triangulations for a small set of examples \cite[Proposition 9.7]{ss-veering-finiteness}.

In contrast, our criterion is readily applicable to large datasets. Shana Li \cite{shana_11_tets} recently compiled a census of all orientable cusped hyperbolic three-manifolds up to eleven tetrahedra. Applying our criterion, we find:

\begin{thm:censusnoveering}
    At least 165,065 manifolds in the census of orientable cusped hyperbolic three-manifolds up to eleven tetrahedra do not admit veering triangulations.
\end{thm:censusnoveering}
These examples constitute roughly 23\% of the 717,993 manifolds in the census.

There are also simple local obstructions to branchability. Such obstructions can persist under certain `cut-and-paste' operations. We use this to build the first explicit infinite family of hyperbolic three-manifolds that do not admit veering triangulations.
\begin{thm:infiniteegs}
     There are infinitely many cusped hyperbolic three-manifolds that do not admit veering triangulations. 
\end{thm:infiniteegs}

Our strategy for proving \Cref{thm:smoothcriterion} is to relate branchings to invariant total orders on the cusps $\cusps(\lift{M})$ of the universal cover. Showing that such an order yields a branching is straightforward (\Cref{prop:cusps_give_branching}). Instead, the main technical step in proving \Cref{thm:smoothcriterion} is finding an invariant total order. A veering triangulation has two natural branchings given by the \emph{veering branched surfaces}. When these branched surfaces are cooriented, each imparts a partial order on the cusps of $\lift{M}$. We show that this can be completed to a total order.

\begin{thm:totalorder}
    Suppose that $M$ admits an edge-orientable veering triangulation $\cal{V}$. Then there is a $\pi_1(M)$-invariant total order on $\cusps(\lift{M})$.
\end{thm:totalorder}

The fact that this order extends the partial order is \Cref{prop:compatible_order}. The relevant definitions are given precisely in \Cref{sec:branchings}. We note that \Cref{thm:totalorder} requires coorientability of the branched surface, but that \Cref{thm:smoothcriterion} does not. We bypass coorientability by choosing a specific total order that interacts nicely with double covers (\Cref{lem:twisted_order_flip}). 

The existence of an invariant total order on the cusps of $\lift{M}$ can be formulated as an orderability property of $\pi_1(M)$ (\Cref{rem:convexity}). Thus, the obstruction is in some sense analogous to the closed case, where pseudo-Anosov flows are obstructed by the absence of circular orders \cite{calegari-dunfield}.

To prove \Cref{thm:totalorder}, we study the \emph{loom space} of a veering triangulation \cite{loom_new_arxiv}. This is a \emph{bifoliated plane} that can equivalently be viewed as the universal cover of the drilled orbit space of a pseudo-Anosov flow without perfect fits. In this light, \Cref{thm:totalorder} is a cusped analogue of recent work that uses foliations and bifoliated planes to build left orders on the fundamental groups of closed three-manifolds \cite{zhao-orderability, camargo-lu-lobfp}. Our proof is not a direct adaptation of these results, but it borrows the core idea: count signs on piecewise paths between the cusps.

Invariant cusp orders also give a connection between branchings and \emph{bi-orders}. In general, bi-orderability of three-manifold groups is less understood than left-orderability, with most work focussing on knot and link complements \cite{clay-rolfsen, kin-rolfsen}. We show that a bi-order on $\pi_1(M)$ implies the existence of cooriented branchings (\Cref{prop:bi-order_branching}). We use this to obstruct bi-orderability in the same way we obstruct \emph{edge-orientable} veering triangulations. As an application, we provide large-scale data on hyperbolic census manifolds whose fundamental groups are not bi-orderable.

\begin{thm:biordercensus}
    At least 570,207 manifolds in the census of orientable cusped hyperbolic three-manifolds up to eleven tetrahedra do not have bi-orderable fundamental groups. These manifolds also do not admit edge-orientable veering triangulations.
\end{thm:biordercensus}

\textbf{Outline.}
We describe triangulations, branchings and cusp orders in \Cref{sec:branchings}. The connection to bi-orders is made in \Cref{subsec:orders}. In \Cref{sec:medians}, we recall the combinatorial geometry of loom spaces. Then we use the geometry to show that loom spaces have medians. This proof requires a careful examination of several cases. 

In \Cref{sec:totalorder}, we build a total order to prove \Cref{thm:totalorder}. The order defined in \Cref{def:winding_order} requires only the basics of loom space geometry. The majority of the work is in showing that the order is transitive. To establish this, we use the existence of medians to prove the cycle identity in \Cref{lem:cycle_identity}. Transitivity is then a straightforward consequence of the cycle identity. We then use the total order to deduce \Cref{thm:smoothcriterion}. 

In \Cref{sec:examples}, we describe the numerical census examples. We then build an explicit infinite family to prove \Cref{thm:infiniteegs}. The construction uses a natural Dehn surgery operation on taut triangulations (\Cref{con:mobius_surgery}). 

\textbf{Acknowledgements.} The author thanks Saul Schleimer and Henry Segerman for many discussions and clarifications about the structure of loom spaces. We also thank them for sharing an early version of their work on counting veering triangulations and for comments on an earlier version of this manuscript. 

This work took place while the author was supported by the Warwick Mathematics Institute Centre for Doctoral Training. The author gratefully acknowledges funding from the University of Warwick and the UK Engineering and Physical Sciences Research Council (Grant number: EP/W524645/1).

\textbf{AI declaration.} In the final stages of writing, the author used ChatGPT 5.6 Sol to point out possible copy-editing errors and clarity concerns in the exposition. The same model was used to suggest improvements to the code that we used to generate computational examples within the paper. 

\section{Triangulations and  branchings}\label{sec:branchings}
We define veering triangulations and branchings. Then we relate branchings to orientations on edges, total cusp orders and bi-orders.

\subsection{Veering and essential triangulations}
Let $M$ be an irreducible oriented compact three-manifold with boundary a disjoint union of tori. Veering triangulations were introduced by Agol \cite[Definition 4.1]{agol-veering}, and generalised in \cite[Definition 1.3]{hodgson-veering}. Recall that an \emph{ideal tetrahedron} is an oriented tetrahedron minus its vertices. An \emph{ideal triangulation} of $M$ is a collection of ideal tetrahedra with face pairings such that the realisation is homeomorphic to the interior of $M$ \cite{thurston-notes}.

    A \emph{veering tetrahedron} is an ideal tetrahedron $t$ equipped with:
    \begin{itemize}
        \item A transverse taut structure: an assignment of dihedral angles in $\{\pi,0\}$ such that the sum of the interior angles of every truncated vertex is $\pi$. Moreover, there is a coorientation of the faces of $t$, with two pointing into $t$ and two pointing out of $t$, such that faces sharing a $\pi$-edge have the same coorientation. 
    
        \item A veering colouring: a colouring of the edges in red and blue that satisfies the following condition. For a vertex $v$ of $t$, let $e$ be the edge incident to $v$ with dihedral angle $\pi$. Using the orientation of $t$, view $\partial t$ from outside $t$. Suppose that $e, e', e''$ are the edges that meet $v$ in counter-clockwise order. Then $e'$ is coloured blue and $e''$ is coloured red.
    \end{itemize}
    This does not specify the colours of the upper and lower edges of a veering tetrahedron. So, there are four combinatorially distinct veering tetrahedra. 
    \begin{figure}[h]
        \centering
        \includegraphics[width=0.7\linewidth]{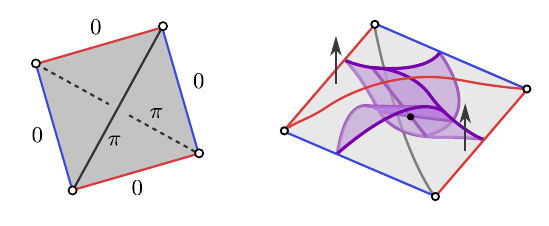}
        \caption{Left: a veering tetrahedron. Right: the upper veering branched surface of a veering tetrahedron with a red top edge.}
        \label{fig:veering_tet}
    \end{figure}
    
    \begin{defn}\label{def:veering}
    A \emph{veering triangulation} is an ideal triangulation by veering tetrahedra such that:
    \begin{enumerate}
        \item The sum of the dihedral angles around each edge in the triangulation is $2\pi$.
        \item The face pairings identify faces cooriented into their tetrahedra with those cooriented out of their tetrahedra.
        \item The face pairings preserve the edge-colouring. \qedhere
    \end{enumerate}
    \end{defn}

A triangulation consisting of transverse taut tetrahedra satisfying the first two items of \Cref{def:veering} is a \emph{transverse taut triangulations} \cite{lackenby-taut}. Further relaxing the condition of face coorientations defines a \emph{taut triangulation}.
\begin{rem}
    If $M$ admits a veering triangulation, then $M$ is a cusped hyperbolic three-manifold by \cite[Theorem 1.5]{hodgson-veering}.
\end{rem}
\begin{rem}
Veering triangulations are in correspondence with \emph{pseudo-Anosov flows without perfect fits} on closed manifolds. The flows are obtained by Dehn filling the cusped manifold along sufficiently far slopes. Conversely, the triangulation of the cusped manifold is realised by drilling the singular orbits of the flow. We refer to \cite{barthelme-mann-book} for background on pseudo-Anosov flows, and \cite{ss-link} for a discussion of this correspondence.
\end{rem}

Let $\cal{T}$ be an ideal triangulation of $M$. A topological feature of certain ideal triangulations is essentiality. Let $\dot{\cal{T}}$ be the truncated triangulation of $M$ obtained by truncating the vertices of $\cal{T}$. For an edge $e$, let $\dot{e}$ be the truncated image of $e$ as an edge in $\dot{\cal{T}}$.
\begin{defn}
     We say that an edge $e$ of $\cal{T}$ is \emph{inessential} if $\dot{e}$ is homotopic, relative to the boundary of $M$, into the boundary of $M$. 

    If no edges of $\cal{T}$ are inessential, we say that $\cal{T}$ is \emph{essential}. 
\end{defn}
  For a broader discussion, see \cite[Definitions 3-5]{edge_essential}.

By \cite[Theorem 16]{edge_essential}, all taut triangulations, and hence veering triangulations, are essential. The same theorem implies that geometric triangulations are also essential. In contrast, a triangulation with a degree-one edge $e$ has an inessential edge that `links' $e$ \cite[\S 3]{henry_deg_one_edges}. 

Essential triangulations interact nicely with the boundary components on the universal cover.
\begin{defn}\label{def:cusps}
Let $\lift{M}$ be the universal cover of $M$. Call the connected boundary components of $\lift{M}$ the \emph{cusps} of $\lift{M}$, denoted $\cusps(\lift{M})$.
\end{defn}

 If $\cal{T}$ is essential, the endpoints of the edges of $\lift{\cal{T}}$ are distinct cusps in $\cusps(\lift{M})$. When $M$ is hyperbolic, this definition of a cusp corresponds to the identification of a cusp of $\lift{M}$ with a parabolic fixed point in $\partial \mathbb{H}^3$.

\subsection{Branchings}
  Following \cite{gabai-oertel}, a \emph{branched surface} in $M$ is an embedded compact two-complex locally modelled on the neighbourhoods shown in \Cref{fig:local_models}. The vertices of a branched surface are \emph{triple points}. 

  \begin{figure}[h]
    \centering
    \includegraphics[width=0.8\linewidth]{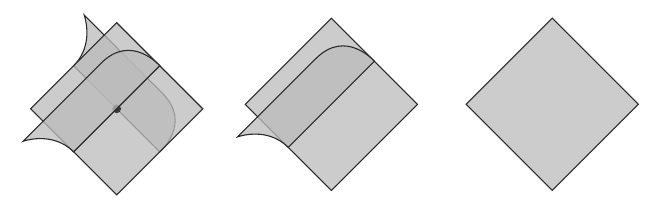}
    \caption{The local models of a branched surface.}
    \label{fig:local_models}
\end{figure}

    Let $D(\cal{T})$ be the dual complex of $\cal{T}$. Recall that $D(\cal{T})$ has the structure of a \emph{special spine} \cite[Theorem 1.1.26]{matveev}. The dual spine of a tetrahedron can be smoothed to the local neighbourhood of a triple point in a branched surface. See \Cref{fig:foliar_tet}, left. A tetrahedron has twelve distinct such smoothings. 
    
\begin{defn}
A \emph{branching} on $\cal{T}$ is a choice of smoothing of each vertex of $D(\cal{T})$ that agrees across faces, giving $D(\cal{T})$ the structure of a branched surface. If $\cal{T}$ admits a branching, we say that $\cal{T}$ is \emph{branchable}.

If $\cal{T}$ admits a branching for which the branched surface is coorientable, we say that $\cal{T}$ is \emph{coorientably branchable}. 
\end{defn}

By a slight abuse of terminology, we also call the branched surface itself a branching. Note that we do not impose global constraints on the branched surface, such as carrying a lamination.

A fundamental example is the following.

\begin{example}\label{eg:vbs}
    A veering triangulation has two natural branchings given by the \emph{veering branched surfaces} \parencites[\S 6.1]{ss-link}[\S 4.2]{lmt-polynomial}.  Call these the \emph{stable} and \emph{unstable} branchings of $\cal{V}$. Denote them by $B^s$ and $B^u$. 
    
    We describe the train track of the unstable branching on the boundary of each tetrahedron with respect to the veering structure. On the lower two faces, the track points to the lower $\pi$-edge. On the upper two faces, it points to the equatorial edges of the same colour as the upper $\pi$-edge. See \Cref{fig:veering_tet}. The stable branched surface is defined with the roles of `upper' and `lower' interchanged. 
    
    When one of the branchings is coorientable, both are, and $\cal{V}$ is called \emph{edge-orientable}. The veering branched surfaces are \emph{dynamic} in the sense of Christy and Mosher. Moreover, they carry the blow-ups of the weak-stable and weak-unstable foliations of the filled pseudo-Anosov flow \cite{christy, mosher1996laminations,  ss-pair, agol-tsang}.
\end{example}

\subsection{Edge orientations}
We encode cooriented branchings with edge orientations, following  Dunfield's persistently foliar triangulations \cite[\S 8]{dunfield-foliar}.

    Let $B$ be a branching on $\cal{T}$. When $B$ is cooriented, the coorientation of each sector induces an orientation on the sector's dual edge. These orientations are preserved under face pairings. We call this the \emph{edge orientation} of $B$. See \Cref{fig:foliar_tet}.

    If $B$ is a non-coorientable branching, there is a double cover of the underlying manifold under which the lift of $B$ is coorientable (see, for example, \cite[Lemma 5.4]{fg-angle-veering}). For $B^s$ and $B^u$ the veering branched surfaces, this is the same cover, called the \emph{edge-orientable double cover} \cite[\S 4]{parlak-taut-alex}.

    \begin{figure}[h]
        \centering
        \includegraphics[width=0.6\linewidth]{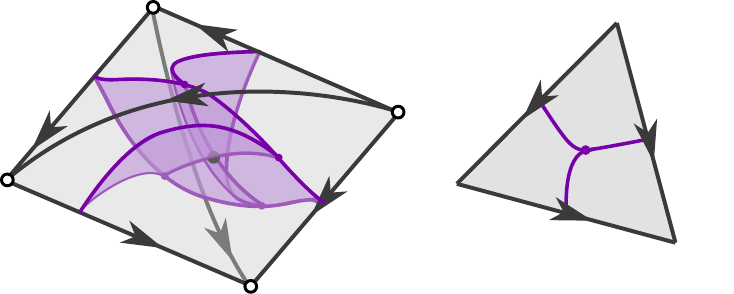}
        \caption[The model of a cooriented branched tetrahedron]{Left: A cooriented branching structure on a tetrahedron. The edge orientations determine the coorientation; ignoring them gives an uncooriented branching. Right: A single face, showing the train track pointing from two short edges to one long edge.}
        \label{fig:foliar_tet}
    \end{figure}
    
\begin{defn}
    Let $B$ be a cooriented branching on $\cal{T}$. The one-skeleton of $\lift{\cal{T}}$ with the lifted edge orientations induced by $B$ has the structure of a directed graph. We say that $B$ is \emph{acyclic} if this directed graph is acyclic.

    Suppose that $B$ is acyclic. For distinct $c,d \in \cusps(\lift{M})$ set $c<_Bd$ if there is a directed path from $c$ to $d$. Call this partial order the \emph{edge order} of $B$. 
\end{defn}

Note that if $B$ is acyclic, the triangulation $\cal{T}$ is necessarily essential. 

\begin{lemma}\label{lem:lamination_implies_acyclic}
    Suppose that $B$ fully carries a cooriented taut lamination. Then $B$ is acyclic.
\end{lemma}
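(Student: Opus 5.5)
We argue by contradiction: a directed cycle in the lifted one-skeleton is turned into a closed transversal to the carried lamination, and then contradicts Novikov-type results for taut laminations.

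Let $\cal{L}$ be a cooriented taut lamination fully carried by $B$. Suppose the directed graph on the one-skeleton of $\lift{\cal{T}}$ has a directed cycle $e_1 e_2 \cdots e_k$. Each edge $e_i$ is dual to a sector $S_i$ of $\lift{B}$ and meets it transversely in one point. By definition of the edge orientation, $e_i$ crosses $S_i$ in the positive coorientation direction. We first fix the branched surface in a fibred neighbourhood $N(B)$ so that $\cal{L}$ lies in $N(B)$ transverse to the $I$-fibres. Since $\cal{L}$ is fully carried, every sector's $I$-fibres meet $\cal{L}$. Hence each $e_i$, perturbed to run along an $I$-fibre near its crossing point, is positively transverse to some leaf of $\lift{\cal{L}}$.

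Next we isotope the cycle so it is transverse to $\lift{\cal{L}}$ everywhere. The part of each $e_i$ away from its dual sector lies in the complement of $N(\lift B)$. This complement is a union of pieces, each a neighbourhood of a truncated vertex, i.e. a cusp of $\lift M$ times an interval region. Consecutive edges $e_i, e_{i+1}$ share a cusp $c_i$, the head of $e_i$ and tail of $e_{i+1}$. We replace the path by segments that leave $N(\lift B)$ on the positive side of $S_i$, travel inside the complementary region containing $c_i$, and re-enter $N(\lift B)$ on the negative side of $S_{i+1}$. Inside complementary regions the path meets no leaves. So the concatenation is a closed curve in $\lift{M}$ that is everywhere positively transverse (or disjoint from) $\lift{\cal{L}}$ and meets it at least once.

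\textbf{Main obstacle.} The key check is that this routing exists: we need the positive side of $S_i$ and the negative side of $S_{i+1}$ to face the same complementary component near $c_i$. This follows from the local model of a cooriented branched tetrahedron (\Cref{fig:foliar_tet}) together with the fact that complementary regions of the dual spine correspond to vertices, i.e. cusps.

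Finally, $\lift{\cal{L}}$ in the simply connected $\lift M$ is a lamination by planes. Complete it to a foliation, or argue directly on leaves, so that each leaf separates $\lift M$, using tautness and irreducibility (Novikov/Palmeira). A closed curve positively transverse to a separating, cooriented leaf must cross it an even algebraic number of times with sign zero. But every crossing is positive, which is a contradiction. Hence $B$ is acyclic.
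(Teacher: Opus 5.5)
Your argument is correct and is essentially the paper's proof. In both, the edges of the directed cycle are joined by arcs running through the cusp complementary regions to give a closed curve in $\lift{M}$ that is positively transverse to the lifted lamination. The paper then projects this curve to a null-homotopic closed transversal in $M$ and cites Gabai--Oertel, Theorem 1(d); you reach the same contradiction in $\lift{M}$ using separating plane leaves.

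One caveat: a taut lamination cannot in general be completed to a foliation. That step should therefore rest on your alternative, namely that leaves of the lifted lamination are properly embedded, hence separating, planes in $\lift{M}$, a fact which itself comes from Gabai--Oertel.
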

\begin{proof}
    Suppose for contradiction that $B$ is cyclic. Let $\Lambda$ be the carried lamination. Lift $\Lambda$ to $\lift{\Lambda}$ on $\lift{M}$. We build a loop $\alpha$ by connecting the edges in the cycle with paths that run close to the boundary. Since $\lift{\Lambda}$ is fully carried, $\alpha$ is positively transverse to $\lift{\Lambda}$. Then $\alpha$ descends to a null-homotopic loop in $M$ that is positively transverse to $\Lambda$, contradicting \cite[Theorem 1 (d)]{gabai-oertel}. 
\end{proof}

The veering branched surfaces of \Cref{eg:vbs} carry laminations. 

\begin{lemma}\label{lem:veer_acyclic}
    The stable and unstable veering branched surfaces of an edge-orientable veering triangulation are acyclic. \qed
\end{lemma}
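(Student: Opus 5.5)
The plan is to deduce \Cref{lem:veer_acyclic} directly from \Cref{lem:lamination_implies_acyclic}. To do so I need two facts about the veering branched surfaces $B^s$ and $B^u$ of an edge-orientable veering triangulation $\cal{V}$: each is cooriented, and each fully carries a cooriented taut lamination.

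\textbf{Coorientability.} This holds by the definition of edge-orientability in \Cref{eg:vbs}. When $\cal{V}$ is edge-orientable, both veering branched surfaces are coorientable. Fix a coorientation on each.

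\textbf{Carrying.} As recalled in \Cref{eg:vbs}, $B^u$ (and symmetrically $B^s$) fully carries a lamination. This lamination is obtained by blowing up the weak-unstable (resp.\ weak-stable) foliation of the pseudo-Anosov flow on a Dehn filling of $M$, and then restricting to the drilled manifold \cite{ss-pair, agol-tsang}. I would cite this rather than reprove it, noting that every sector receives leaves, so the carrying is full. The coorientation of $B^u$ then coorients the carried lamination.

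\textbf{Tautness.} This is the step needing the most care. Tautness of the carried lamination in $M$ requires that no leaf is a sphere or compressing disc, and that every leaf is $\pi_1$-injective with no null-homotopic closed transversal. I would argue it in one of two ways.

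\begin{itemize}
\item The branched surfaces are laminar in the sense of Gabai--Oertel, since they are dynamic in the sense of Christy--Mosher. There are no disks of contact and no monogons in the complement, because the complementary regions of the veering branched surfaces are cusped solid tori and torus shells around the cusps, described in \cite{ss-link}. Gabai--Oertel then gives that the carried lamination is essential. For a cooriented lamination with no compact leaves bounding Reeb-type regions, this gives the transversal property needed in \cite[Theorem 1 (d)]{gabai-oertel}.
\item Alternatively, the leaves of the weak (un)stable foliation of a pseudo-Anosov flow are planes and annuli transverse to the flow. A closed transversal is homotopic to a curve transverse to the flow foliation in the orbit space. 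Lifting to the universal cover, where the orbit space is a plane and the leaves project to a foliation by lines, such a curve cannot close up. The same holds after drilling the singular orbits.
\end{itemize}

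\textbf{Conclusion.} With these facts in hand, \Cref{lem:lamination_implies_acyclic} applies to each of $B^s$ and $B^u$, and the lemma follows.

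An even more direct route avoids laminations entirely. Each oriented edge of $\lift{\cal{V}}$ crosses the lifted lamination positively. A directed cycle would give a closed positive transversal in $\lift{M}$, which contradicts the fact that leaves of the lifted lamination separate the simply connected $\lift{M}$.
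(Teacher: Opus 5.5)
Your main line is the paper's argument. The paper deduces \Cref{lem:veer_acyclic} from \Cref{lem:lamination_implies_acyclic}, using that for edge-orientable $\cal{V}$ the veering branched surfaces are cooriented and fully carry the blown-up weak stable and unstable laminations of the filled flow, facts it cites rather than reproves. Your side remarks contain a few slips, none of which affects this: in $M$ itself the complementary regions of $B^s$ and $B^u$ are only the cusped torus shells (cusped solid tori appear only after filling); weak (un)stable leaves are saturated by the flow, not transverse to it; and your ``more direct route'' still uses the lifted lamination, being the proof of \Cref{lem:lamination_implies_acyclic} with separation by properly embedded lifted leaves in place of Gabai--Oertel's Theorem 1(d).
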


We give a cyclic example. 

\begin{example}\label{eg:m006}
    Let $M$ be the census manifold $\texttt{m006}$. Its census triangulation is not branchable. A single Pachner $2\to3$ move yields a triangulation with a coorientable branching. However, the branching is cyclic. See \Cref{fig:local_obs}. 
\begin{figure}[h]
    \centering
    \includegraphics[width=0.7\linewidth]{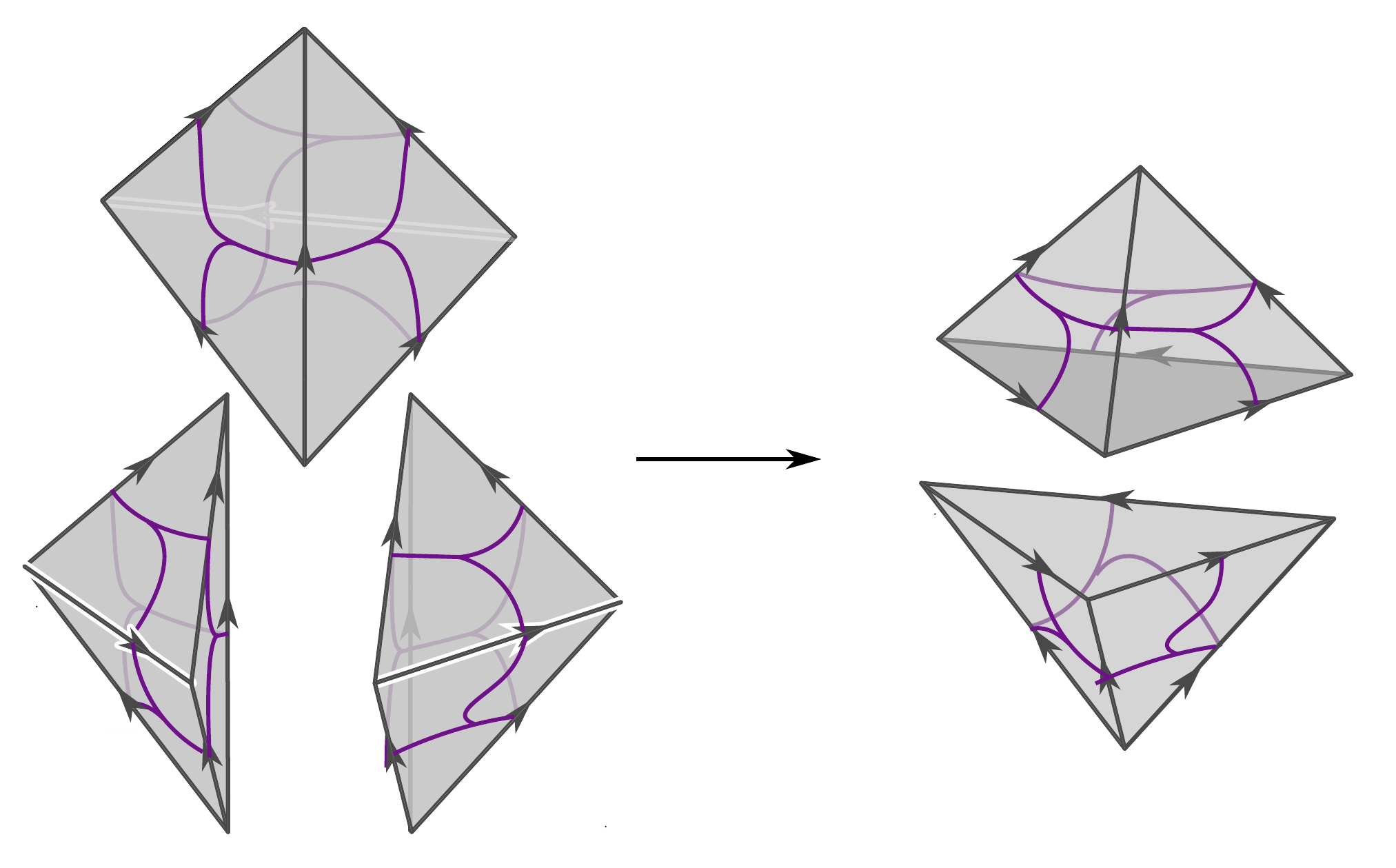}
    \caption[A cyclic branching]{Left: A cyclic cooriented branching. Right: Under a Pachner $3\to2$ move, there is no induced branching.}
    \label{fig:local_obs}
\end{figure}
\end{example}


\begin{defn}
    Let $B$ be an acyclic cooriented branching on $\cal{T}$. A total order on $\cusps(\lift{M})$ is \emph{compatible} with $B$ if it extends $<_{B}$. 
\end{defn}

It is not clear in general whether an acyclic branching admits a compatible order that is also $\pi_1(M)$-invariant. \Cref{thm:totalorder} asserts that we can extend the partial order when it arises from a veering branched surface.

Conversely, edge orientations give cooriented branchings as follows.
\begin{con}\label{con:edge_o_to_branch}
    Suppose that $\cal{T}$ admits an orientation of its edges such that the restriction to each tetrahedron of $\cal{T}$ is acyclic. We call the orientation \emph{locally acyclic}.
    
    In a locally acyclic orientation, each face $f$ has a maximal and a minimal vertex with respect to the edge orientations. There is a unique edge of $f$ from the minimal to the maximal vertex; call this edge \emph{long}. The other two edges are \emph{short}. Place a train track in $f$ that points to the long edge. In a tetrahedron $t$, the train tracks on the faces interpolate uniquely to a branching on the dual spine of $t$. See  \Cref{fig:foliar_tet}. The branching is cooriented by the edge orientations.
\end{con}

Summarising the discussion, we have:
\begin{fact}\label{lem:order_gives_branch}
    There is a bijection between locally acyclic edge orientations on $\cal{T}$ and cooriented branchings on $\cal{T}$. \qed 
\end{fact}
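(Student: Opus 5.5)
We construct mutually inverse maps between the two sets: Construction~\ref{con:edge_o_to_branch} goes from locally acyclic edge orientations to cooriented branchings, and the edge orientation of a cooriented branching $B$ goes back. The plan has three steps: check that each map lands in the claimed set, then check that both composites are the identity. Everything is local, so it suffices to work in a single tetrahedron $t$ and then check compatibility across face pairings.

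\textbf{Step 1: from orientations to branchings.} Let an orientation be locally acyclic. On each face $f$, the restriction is acyclic, hence it is a total order on the three vertices of $f$. So $f$ has a well-defined long edge, and we place a train track in $f$ pointing to it. Within $t$, acyclicity gives a total order $v_0<v_1<v_2<v_3$ on the vertices. The four face tracks must then extend to a smoothing of the dual vertex. One checks directly that the tracks pointing to the long edges $v_0v_2$, $v_0v_3$, $v_1v_3$ and $v_0v_3$ (on the faces missing $v_3,v_2,v_0,v_1$) are the boundary train track of exactly one of the twelve smoothings. The smoothings correspond to the $24/2=12$ vertex orderings up to reversal, so this is consistent. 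The track on a face depends only on the orientations of that face's edges, so the smoothings agree across face pairings. Finally, the coorientation of each sector is read off from the orientation of its dual edge. Local consistency of this coorientation at each branch curve is exactly the statement that, at a face, the long edge direction agrees with the two short edges composed. So the orientation cooriented the branching.

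\textbf{Step 2: from branchings to orientations.} This is the step I expect to be the main obstacle, since it is the one requiring a genuine local verification. Let $B$ be cooriented. We must show that its edge orientation is locally acyclic. In the model of Figure~\ref{fig:foliar_tet}, a cooriented smoothing of the triple point in $t$ determines an ordering of the four complementary regions, namely the vertices of $t$, by passing through sectors positively. Coorientation consistency at the branch locus in each face forces the edges to point from the two "short" sides into the long one. Hence a directed $3$-cycle on a face is impossible, because the branch curve there would have incoherent coorientation. A directed $4$-cycle in $t$ is then also excluded: a tournament on four vertices with no directed $3$-cycle is transitive, hence acyclic. Concretely, I would enumerate the twelve smoothings with their two coorientations each, $24$ in total, and match them with the $24$ total orders of the vertices of $t$.

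\textbf{Step 3: the composites.} Starting from an orientation, building the branching, and reading off the edge orientation returns the original orientation, because the coorientation was defined from it. Starting from a cooriented $B$, its edge orientation produces tracks pointing to the long edges. By the local matching in Step 2, these tracks are the tracks of $B$, so the branching with its coorientation is recovered. This gives the bijection.
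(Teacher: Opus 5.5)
Your proposal is correct and matches the paper, which records this Fact without further proof as the observation that the edge orientation induced by a coorientation and Construction~\ref{con:edge_o_to_branch} are mutually inverse. Your face-level check supplies exactly the verification the paper leaves implicit: coherence of the coorientation at a switch forces the face's edges to be transitively oriented, with the track pointing to the long edge, and a tournament on four vertices with no directed $3$-cycle is acyclic (this also makes your proposed enumeration of the $24$ cooriented smoothings unnecessary).
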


We record the following basic fact that we will use to descend cooriented branchings to non-coorientable ones.

\begin{lemma}\label{lem:smooth_flipped}
The two branchings on a tetrahedron given by an edge orientation and its reverse induce the same branching with opposite coorientations.
\end{lemma}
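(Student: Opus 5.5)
We prove \Cref{lem:smooth_flipped} by working with \Cref{con:edge_o_to_branch} one face at a time. The underlying branched surface (forgetting coorientation) on the dual spine of a tetrahedron $t$ is determined by the train tracks on the four faces of $t$. This is because the construction says the face train tracks interpolate uniquely to a smoothing of the triple point. It therefore suffices to show two things: reversing all edge orientations leaves each face's train track unchanged, and it reverses the coorientation.

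First, fix a face $f$ of $t$ with a locally acyclic orientation. Reversing every edge of $f$ keeps the orientation on $f$ acyclic. It also swaps the maximal and minimal vertices of $f$. The long edge is the unique edge joining the minimal vertex to the maximal vertex. After reversal, the same edge joins the new minimal vertex (the old maximum) to the new maximal vertex (the old minimum). So the long edge is unchanged, and the two short edges stay short. Since the train track on $f$ is the one pointing from the two short edges to the long edge, it is the same track as before.

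Second, the reversed orientation is still acyclic on the whole of $t$, since a directed cycle after reversal would give a directed cycle before reversal. So \Cref{con:edge_o_to_branch} applies to the reversed orientation. The four face tracks agree with the original ones, so by uniqueness of the interpolation the two smoothings of the dual spine of $t$ coincide.

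Third, each sector of the branched surface is cooriented by the orientation of its dual edge. Reversing every edge therefore reverses the coorientation on every sector. This gives the same branching with the opposite coorientation.

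The only point needing care is the uniqueness of the interpolation, that is, that the face tracks determine the smoothing of the tetrahedron. The construction asserts this, and we take it as given.
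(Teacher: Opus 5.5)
Your proposal is correct and follows the paper's argument. In both, reversal fixes the long edge of each face, so the face train tracks and hence the smoothings of each tetrahedron agree, while the reversed edge orientations reverse the coorientation. Your extra checks, that acyclicity is preserved and that the minimal and maximal vertices swap, just spell out what the paper's one-line proof leaves implicit.
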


\begin{proof}
    The long edge in a face is the same after reversing the arrows. Then the patterns of the two branchings agree on the boundary of every tetrahedron. Thus, the branchings agree, and the reversed arrows reverse the coorientation. 
\end{proof}

\subsection{Orders}\label{subsec:orders}

Branchings arise naturally from orders on the cusps of $\lift{M}$.  

\begin{prop}\label{prop:cusps_give_branching}
    Suppose that $\cusps(\lift{M})$ admits a $\pi_1(M)$-invariant total order. Then any essential ideal triangulation of $M$ is coorientably branchable.
\end{prop}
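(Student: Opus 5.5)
The plan is to use the invariant total order to orient every edge of the triangulation, and then apply \Cref{lem:order_gives_branch}. Let $<$ be a $\pi_1(M)$-invariant total order on $\cusps(\lift{M})$, and let $\cal{T}$ be an essential ideal triangulation of $M$. Lift $\cal{T}$ to $\lift{\cal{T}}$ on $\lift{M}$.

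\textbf{Orienting the lifted edges.} Each edge $\lift{e}$ of $\lift{\cal{T}}$ has two endpoints. These lie in cusps $c, d \in \cusps(\lift{M})$, and since $\cal{T}$ is essential we have $c \neq d$. We orient $\lift{e}$ from the $<$-smaller cusp to the $<$-larger one.

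\textbf{Descending to $M$.} For $g \in \pi_1(M)$, the translate $g\lift{e}$ has endpoints $gc$ and $gd$. Invariance of $<$ gives $c<d$ if and only if $gc<gd$. So the deck group preserves the edge orientations, and they descend to an orientation of the edges of $\cal{T}$.

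One subtlety is an edge whose lift has both ends at the same cusp, or a tetrahedron with several vertices on one cusp. Essentiality rules out the first case for edges. For tetrahedra we work in the lift: a lifted tetrahedron has four ideal vertices, and the endpoints of its six edges are cusps. Two distinct vertices of a lifted tetrahedron might a priori lie on the same cusp only if they are not joined by an edge. But every pair of vertices of a tetrahedron is joined by an edge, so essentiality makes the four cusps pairwise distinct.

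\textbf{Local acyclicity.} On each lifted tetrahedron, the orientation is the one induced by a total order on its four distinct vertex cusps. This is a transitive tournament, and hence acyclic. Local acyclicity can be checked on lifts, since each tetrahedron of $\cal{T}$ lifts homeomorphically; note that orientations on $\cal{T}$ are read off a lifted tetrahedron. Therefore the orientation on $\cal{T}$ is locally acyclic.

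\textbf{Conclusion.} By \Cref{con:edge_o_to_branch} and \Cref{lem:order_gives_branch}, this locally acyclic orientation yields a cooriented branching. Hence $\cal{T}$ is coorientably branchable.

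The main point requiring care is the distinctness of cusps at the four vertices of a lifted tetrahedron. This is exactly where essentiality enters, via the fact that every vertex pair spans an edge. The remaining steps are formal.
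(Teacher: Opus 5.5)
Your proposal is correct and follows essentially the same route as the paper: orient each lifted edge toward the greater cusp, use invariance of the order to descend the orientation to $\cal{T}$, observe that it is locally acyclic, and apply \Cref{lem:order_gives_branch}. Your explicit check that the four vertex cusps of a lifted tetrahedron are pairwise distinct, since every vertex pair spans an edge, fills in a step the paper leaves implicit.
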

\begin{proof}
    Let $\cal{T}$ be an essential ideal triangulation of $M$. Since $\cal{T}$ is essential, a lifted edge $\lift{e}$ of $\lift{\cal{T}}$ has endpoints on distinct cusps. So, we can orient $\lift{e}$ to point to the greater cusp in the invariant total order. This orientation is $\pi_1(M)$-invariant, so it descends to an orientation of $\cal{T}$ that is locally acyclic. Now apply \Cref{lem:order_gives_branch}. 
\end{proof}

\Cref{prop:cusps_give_branching} may hold even for inessential triangulations. In numerical searches, we have been unable to find an unbranchable triangulation of a manifold that we know admits a $\pi_1(M)$-invariant total order on $\cusps(\lift{M})$.

\begin{rem}\label{rem:pachner_orders}
    Suppose that $\cal{T}$ and $\cal{T}'$ are essential triangulations of the same manifold. Suppose there is an invariant total order on $\cusps(\lift{M})$. Then $\cal{T}$ and $\cal{T}'$ inherit branchings $B$ and $B'$, and we can connect these branchings by a sequence of Pachner moves. Namely, Kalelkar-Schleimer-Segerman show that $\cal{T}$ and $\cal{T}'$ can be connected by a sequence of Pachner moves such that every intermediate triangulation along the path is also essential \cite[Theorem 1.3]{essential_moves_II}. Each such triangulation inherits a branching from the total order on $\cusps(\lift{M})$, and this branching agrees on all tetrahedra that were left unchanged by the move. Compare with \Cref{eg:m006}.  
\end{rem}

For the remainder of the subsection, we take a brief detour to relate cusp orders to orderability properties on $\pi_1(M)$. Recall that $\pi_1(M)$ is \emph{left-orderable} if it admits a total order that is invariant under left multiplication.

\begin{rem}\label{rem:convexity}
 Since $M$ is cusped, $\pi_1(M)$ is left-orderable \cite[Theorem 1.1]{boyer-rolfsen-wiest}. The existence of an invariant total cusp order is equivalent to every peripheral subgroup being \emph{left relatively convex} in $\pi_1(M)$. Set $G=\pi_1(M)$. The condition that every peripheral subgroup of $G$ is left relatively convex has two equivalent formulations:
 \begin{itemize}
 \item For each peripheral subgroup $P\leq G$, there is a $G$-invariant total order on the coset space $G/P$.
 
     \item For each peripheral subgroup $P$, there is a left order on $G$ in which $P$ is convex.
     
 \end{itemize}
 The first characterisation is essentially the definition of a total cusp order. The second is equivalent to the first by \cite[Proposition 2.1.3]{basic_order_stuff}. Note the quantifier: we pick a different left order for each peripheral subgroup. 
\end{rem}
Since total cusp orders are related to certain left orders as in \Cref{rem:convexity}, it is also natural to discuss bi-orderability. A group is \emph{bi-orderable} if it admits a total order that is invariant under both left and right multiplication. 

A connection between bi-orderability and convex orders was made by Boyer-Clay while studying slope detection \cite{boyer-clay}. They observe a version of the following. 

\begin{lemma}\label{lem:bi-order_gives_cusp_order}
    Let $M$ be a cusped finite-volume hyperbolic three-manifold. Suppose that $\pi_1(M)$ is bi-orderable. Then there is a $\pi_1(M)$-invariant total order on $\cusps(\lift{M})$. 
\end{lemma}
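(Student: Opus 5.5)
We will produce, for a fixed peripheral subgroup $P$, a $G$-invariant total order on the coset space $G/P$, where $G=\pi_1(M)$. By \Cref{rem:convexity}, this is essentially what an invariant cusp order is. To pass from one cusp class to all of $\cusps(\lift{M})$, we then order the finitely many cusps of $M$. Let $P_1,\dots,P_k$ be representatives of the peripheral subgroups, one per cusp of $M$, so that $\cusps(\lift{M})=\bigsqcup_i G/P_i$ as $G$-sets. On this disjoint union we take the lexicographic order: first by the index $i$, then by the invariant order on $G/P_i$. Each piece is $G$-invariant, so the result is a $G$-invariant total order on $\cusps(\lift{M})$. The problem therefore reduces to a single peripheral subgroup $P\cong\bbZ^2$.

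Fix a bi-order $<$ on $G$ (which we may take to be the given one). The idea, following Boyer--Clay, is to show that a suitable subgroup $C\supseteq P$ is convex in the left order $<$. With that in hand, $G/C$ inherits a $G$-invariant order by setting $gC<hC$ if $gc<hc'$ for all $c,c'\in C$; convexity is exactly what makes this well defined and total. We then need $C=P$. The natural candidate is the convex hull: let $C$ be the smallest convex subgroup containing $P$. In a bi-ordered group, convex subgroups are closed under intersection and union of chains, and they are normalised by... (careful: not necessarily normal). The key facts we will use are as follows. In a bi-ordered group, if $g^n\in C$ for some $n\neq 0$ then $g\in C$; this holds because $1<g$ implies $1<g<g^n$, so convexity traps $g$. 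Moreover, the convex hull of an abelian subgroup is contained in the set of elements bounded above and below by elements of $P$.

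The main step is to show that this convex hull is exactly $P$, using hyperbolic geometry. Since $M$ is finite-volume hyperbolic, $P$ is a maximal abelian and malnormal subgroup: $gPg^{-1}\cap P\neq 1$ implies $g\in P$, and $P$ is closed under roots. Now suppose $g\in C\setminus P$. We will try to derive a contradiction by producing an element of $C$ that conjugates some element of $P$ into $P$. Concretely, in bi-ordered groups the convex subgroups form a chain, and for each $g$ there is a \emph{convex jump} $C_g\subsetneq C^g$, with $C_g$ normal in $C^g$ and $C^g/C_g$ order-isomorphic to a subgroup of $\mathbb{R}$ (H\"older). Take $p\in P\setminus\{1\}$ and the jump at $p$. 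Bi-invariance forces conjugation by elements of $C^p$ to act trivially on the archimedean quotient $C^p/C_p$. We would combine this with malnormality and the fact that $P\cong\bbZ^2$ to show that $P$ itself is the convex hull. In fact, I expect the cleanest route is to show that the convex hull $C$ is abelian, or is contained in the centraliser of some $p\in P$, which equals $P$ by maximality of $P$ as an abelian subgroup.

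This last step is the obstacle. A general bi-ordered group need not have abelian peripheral subgroups convex. What we need is to exploit the interaction of the H\"older quotients with the malnormality of $P$, perhaps after replacing the bi-order by a modified one (for instance by a lexicographic refinement adapted to $P$, as Boyer--Clay do), so that $P$ becomes convex. The first thing I would try is to take the smallest convex subgroup $C$ containing $P$ and argue that $P$ has finite index in $C$ via the H\"older embeddings. Combined with root-closure of $P$ and torsion-freeness, this would force $C=P$.
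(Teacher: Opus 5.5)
Your reduction to a single peripheral subgroup, and the lexicographic assembly over cusp orbits, are fine and match the paper. The gap is the core step. You never produce a $G$-invariant total order on $G/P$, and the route you sketch fails as stated.

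Convex subgroups of a left order form a chain, so the smallest convex subgroup containing $P$ is typically far larger than $P$. Take the figure-eight knot group $G=F_2\rtimes_\phi\mathbb{Z}$ with the Perron--Rolfsen bi-order: compare images in $\mathbb{Z}$ first, then use a $\phi$-invariant bi-order on the fibre $F_2$. In this order $F_2$ is convex, and the meridian $\mu\in P$ lies outside it. So any convex subgroup containing $P$ contains $\langle F_2,\mu\rangle=G$. Hence the convex hull of $P$ is neither $P$, nor abelian, nor a finite-index overgroup of $P$, and no analysis of H\"older quotients changes that. Your fallback, ``modify the bi-order so that $P$ becomes convex'', is a restatement of the goal: by \Cref{rem:convexity}, a left order with $P$ convex is equivalent to an invariant order on $G/P$.

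The missing idea is to use the bi-order through conjugation, not through convexity. Fix $1\neq p\in P$ and set $gP<_p hP$ if and only if $gpg^{-1}\prec hph^{-1}$.
\begin{itemize}
\item Since $P$ is abelian, this is well defined.
\item If $gpg^{-1}=hph^{-1}$, then $h^{-1}g$ centralises $p$. Malnormality of $P$, which you already listed, then gives $h^{-1}g\in P$, so the relation is a total order.
\item A bi-order is conjugation-invariant, so $gpg^{-1}\prec hph^{-1}$ implies $kgpg^{-1}k^{-1}\prec khph^{-1}k^{-1}$. This is $G$-invariance.
\end{itemize}
Equivalently, $gP\mapsto gpg^{-1}$ is a $G$-equivariant injection of $G/P$ into $G$ with its conjugation action, and you pull the bi-order back along it. This is the paper's argument. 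The hyperbolic input enters only to identify $P$ with the centraliser of $p$, not to control any convex hull.
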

This fact appears implicitly in \cite[Corollary 5.4, Example 7.1]{boyer-clay} and the argument uses \cite[Corollary 5.1.4]{basic_order_stuff}. We give a direct, self-contained proof that constructs the same order. 

\begin{proof}
    Fix a bi-order $\prec$ of $\pi_1(M)$. Let $P\simeq\bbZ^2$ be a peripheral subgroup of $\pi_1(M)$. We give a left-invariant order on the left cosets of $P$. Choose a nonidentity element $p\in P$. For $g,h\in \pi_1(M)$ define $<_p$ by:
    \[ gP <_p hP \text{ \quad if and only if \quad } gpg^{-1} \prec hp h^{-1}.\]
    
    We first claim that $<_p$ is a well-defined total order. If $hP= gP$, then $gpg^{-1}  = hph^{-1}$ since $P\simeq \bbZ^2$ is abelian. We must also show that for distinct cosets, the conjugates are distinct. Suppose that $gpg^{-1}  = hph^{-1}$. Then conjugation by $h^{-1}g$ fixes $p$. Since $P$ is a peripheral subgroup and $M$ is hyperbolic, $P$ is malnormal, and so $h^{-1}g\in P$. Hence $<_p$ is a well-defined total order.
    
     To show invariance, let $k\in \pi_1(M)$ and suppose that $gP<_p hP$. Since $\prec$ is a bi-order, it is preserved under conjugation, and so $kgP<_p khP$. 
     
    Repeat this for a peripheral subgroup representing each cusp orbit. Choose any order on the distinct cusp orbits. Together with the order on each cusp orbit, this induces a lexicographic invariant total order on $\Delta(\lift{M})$. 
    %
    %
\end{proof}

We obtain a connection between bi-orderability and branchings.

\begin{prop}\label{prop:bi-order_branching}
    Let $M$ be a cusped hyperbolic three-manifold. Let $\cal{T}$ be an essential ideal triangulation of $M$. If $\pi_1(M)$ is bi-orderable, then $\cal{T}$ is coorientably branchable. 
\end{prop}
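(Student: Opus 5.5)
The proof is a direct composition of two results already established. First, since $M$ is a cusped hyperbolic three-manifold with bi-orderable fundamental group, \Cref{lem:bi-order_gives_cusp_order} supplies a $\pi_1(M)$-invariant total order on $\cusps(\lift{M})$. The hypotheses needed there are that $M$ has finite volume and that the peripheral subgroups are malnormal copies of $\bbZ^2$. Both hold in our setting, since $M$ is the interior of a compact manifold with torus boundary and carries a hyperbolic structure.

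Second, with the invariant total order in hand, \Cref{prop:cusps_give_branching} applies verbatim to the essential ideal triangulation $\cal{T}$. Essentiality guarantees that each lifted edge $\lift{e}$ of $\lift{\cal{T}}$ has endpoints on two distinct cusps. We orient $\lift{e}$ toward the greater cusp; this orientation is $\pi_1(M)$-equivariant and so descends to an edge orientation of $\cal{T}$. Within any tetrahedron, the orientation is induced by a total order on its (lifted) vertices, so it is locally acyclic. \Cref{lem:order_gives_branch} (via \Cref{con:edge_o_to_branch}) then produces a cooriented branching on $\cal{T}$.

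There is essentially no obstacle here. The only point to check is that the hypotheses of the two cited results match: \Cref{lem:bi-order_gives_cusp_order} asks for finite volume, which is implicit in ``cusped hyperbolic'' in this paper's setting of compact $M$ with torus boundary. One might also check that a tetrahedron whose vertices lift to repeated cusps is excluded, but this is exactly what essentiality of the edges rules out, because every pair of vertices of a tetrahedron spans an edge.
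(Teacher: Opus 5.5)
Your proposal is correct and follows the paper's proof exactly: obtain the invariant cusp order from \Cref{lem:bi-order_gives_cusp_order}, then apply \Cref{prop:cusps_give_branching}. Your additional checks on the finite-volume hypothesis and on essentiality ruling out repeated cusps within a tetrahedron are correct, and are implicit in the paper's two-line argument.
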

\begin{proof}
    \Cref{lem:bi-order_gives_cusp_order} gives a cusp order. Apply \Cref{prop:cusps_give_branching}. 
\end{proof}
\begin{example}
    The fundamental groups of the figure-eight knot complement and the Whitehead link complement are known to be bi-orderable. The fundamental groups of their sibling manifolds are known to be non-bi-orderable \cite[Theorem 5.1, Theorem 5.2]{kin-rolfsen}.
    
    \Cref{prop:bi-order_branching} implies that all essential triangulations of the figure-eight knot complement and of the Whitehead link complement are coorientably branchable. So, the methods of the current paper do not rule out the presence of a veering triangulation on the Whitehead link complement, whereas \cite{misha-dunfield-egs,ss-veering-finiteness} both do. 
    
    On the other hand, the siblings have census triangulations that are essential and do not admit coorientable branchings. Thus \Cref{prop:bi-order_branching} gives an alternate proof of the non-bi-orderability result.
\end{example}

We give a census of examples in \Cref{subsec:census}. We finish our discussion of orders on $\pi_1(M)$ with circular orders.

\begin{rem}\label{rem:circles}
 There is also a convex-order characterisation for the invariant \emph{circular orders} on cusps constructed in \cite[\S 3]{ss-link}. However, any cusped hyperbolic three-manifold admits a circular order on its cusps, as follows. The Gabai-Mosher construction produces a blown-up pseudo-Anosov flow on $M$ \cite{mosher1996laminations, landry-tsang}. Then $\pi_1(M)$ acts on the associated bifoliated plane and its  boundary circle. This plane need not be loom. The cusps are points on this circle, giving a circular order. Schleimer-Segerman use these circular orders to count veering triangulations \cite{ss-veering-finiteness}. However, the ubiquity of the orders is one reason why their method applies to relatively few census manifolds.
\end{rem}



\section{Loom spaces and medians}\label{sec:medians}
We give a summary of the geometry of loom spaces as introduced in \cite{loom_new_arxiv} and then use this geometry to show that loom spaces have medians (\Cref{lem:medians}). We will use this structure to construct a total order in \Cref{sec:totalorder}. For background on bifoliated planes and pseudo-Anosov flows, see \cite{barthelme-mann-book}.

\subsection{Loom spaces}\label{subsec:loom}
Fix a finite veering triangulation $\cal{V}$ of $M$. Associated with $\cal{V}$ is the \emph{loom space} $\cal{L}$ introduced in \cite[Definition 2.11]{loom_new_arxiv}. This is a plane equipped with a pair of transverse foliations $\cal{L}^u$ and $\cal{L}^s$ that are preserved by a $\pi_1(M)$-action. We call $\cal{L}^u$ and $\cal{L}^s$ the \emph{unstable} and \emph{stable} foliations, respectively. Fix an orientation on each foliation. We use the south-to-north and west-to-east directions to refer to the orientations along $\cal{L}^s$ and $\cal{L}^u$ respectively.

An automorphism of $\cal{V}$ induces an automorphism of $\cal{L}$. The $\pi_1(M)$-action preserves the orientations on the foliations of $\cal{L}$ if and only if $\cal{V}$ is edge-orientable (\Cref{eg:vbs}).

A \emph{rectangle} $R\subset\cal{L}$ is the image of an embedding of $(0,1)^2$ into $\cal{L}$ that is foliated by $\cal{L}^s$ and $\cal{L}^u$ as a product. A \emph{cusp rectangle} is an embedding of $[0,1]^2$ with one corner removed, foliated as a product. See \Cref{fig:rectangles}. The sides of a cusp rectangle incident to the missing corner are called \emph{cusp sides}. Call two cusp rectangles \emph{adjacent} if one has a cusp side contained in a cusp side of the other. The \emph{cusps} of $\cal{L}$ are equivalence classes of cusp rectangles under the transitive closure of this adjacency relation. Let $\Delta(\cal{L})$ denote the set of cusps.

 By an abuse of terminology, we treat cusps as ideal points. To distinguish them, points in $\cal{L}$ are called \emph{material}. Indeed, $\cal{L}$ has a natural compactification to a disk \cite[\S 1.5 (A)]{ss-link} in which cusps are points along the boundary, though we do not use this structure.

The closure of a rectangle in $\cal{L}$ has four well-defined \emph{sides}. Each side can contain a cusp, and is thus a union of arcs in at most two leaves. We refer to these sides by their cardinal directions. Sides meet at the material or cusp \emph{corners} of the rectangle. 

\begin{figure}[h]
    \centering
    \includegraphics[width=0.5\linewidth]{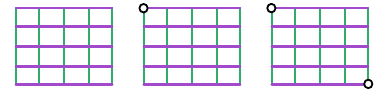}
    \caption{From left to right: a rectangle with all boundary points material, a cusp rectangle, and an edge rectangle. The foliations $\cal{L}^u$ and $\cal{L}^s$ are depicted in purple (horizontal) and green (vertical).}
    \label{fig:rectangles}
\end{figure}

The `loom' axioms dictate how rectangles behave with respect to cusps. We do not appeal directly to the definition, so we instead explicitly state the resulting properties that we use. Some of these can be intuited from the following.

\begin{rem}\label{rem:flow}
Concretely, the loom spaces we consider arise as follows. Since $\cal{V}$ is a veering triangulation, there are filling slopes on $M$ such that the filled manifold admits a \emph{pseudo-Anosov flow without perfect fits}. Let $\varphi$ be this flow \cite{agol-tsang,ss-pair}. Then the \emph{orbit space} $\cal{O}:=\lift{M}/\lift{\varphi}$ is a plane equipped with a pair of foliations and a $\pi_1(M)$-action \cite{fenley-mosher-qg}. Puncture the discrete set in $\cal{O}$ corresponding to the singular orbits of $\varphi$ and lift to the universal cover of the punctured plane. This gives another bifoliated plane. This plane is equivariantly isomorphic to $\cal{L}$ \cite[page 2]{ss-link}.
\end{rem}

For a point $p\in\cal{L}$, let $\cal{L}^s(p)$ and $\cal{L}^u(p)$ be the leaves of the respective foliations that contain $p$. A \emph{cusp leaf} of $\cal{L}$ is a leaf in either foliation with one end at a cusp. A leaf in $\cal{L}$ can only have a cusp at one end \cite[Lemma 3.10]{loom_new_arxiv}. For $p\in \Delta(\cal{L})$, let $\cal{L}^s(p)$ and $\cal{L}^u(p)$ be the unions of all the leaves in each respective foliation that have an end at $p$.

\begin{defn}\label{def:orthants}
    Let $p\in\cal{L}\cup \Delta(\cal{L})$. The connected components of $\cal{L} - (\cal{L}^u(p)\cup \cal{L}^s(p))$ are the \emph{orthants} of $p$. A \emph{closed orthant} is the union of an orthant with its two boundary leaves.
\end{defn}

The orientations on the foliations give an orthant a pair of cardinal directions. So, we speak, for example, of a northeastern orthant. A material point has four orthants. The loom property implies that the orthants of a cusp are indexed by $\bbZ$, with their order induced by the adjacency relation \cite[Lemma 6.15]{loom_new_arxiv}. Ascending in the order on the orthants corresponds to walking around the puncture in the drilled orbit space of \Cref{rem:flow}. We thus think of walking around a cusp as analogous to walking around a branch cut.

The Agol-Gu\'eritaud construction builds $\cal{V}$ explicitly from $\cal{L}$ (\cite[\S 6]{loom_new_arxiv}, \cite[\S 3]{landry-minsky-taylor}). The cusps of $\cal{L}$ are in $\pi_1(M)$-equivariant correspondence with the vertices of $\lift{\cal{V}}$, the lift of $\cal{V}$ to the universal cover. This gives a correspondence between $\cusps(\cal{L})$ and $\cusps(\lift{M})$. An \emph{edge rectangle} in $\cal{L}$ is an embedding of $[0,1]^2$ with two opposite corners removed, foliated as a product. Two vertices of $\lift{\cal{V}}$ span an edge if and only if their corresponding cusps in $\cusps(\cal{L})$ bound an edge rectangle in $\cal{L}$. See \Cref{fig:rectangles}.

\subsection{Geodesics and hulls}
We use the theory of geodesics and hulls of loom spaces as detailed in \cite[\S4]{loom_new_arxiv}. We add some additional deductions needed later.

A \emph{polygonal path} $\gamma$ in $\cal{L} \cup \cusps(\cal{L})$ is an oriented path given by a union of arcs in a leaf of either foliation that are glued end-to-end. The maximal connected arcs of $\gamma$ in a fixed leaf are the \emph{stable} and \emph{unstable edges}. These are connected at the \emph{vertices} of $\gamma$. A vertex can be a material point or a cusp.

\begin{defn}\label{def:geodesic}
    A polygonal path in $\cal{L}\cup\cusps(\cal{L})$ is a \emph{geodesic} if its intersection with each leaf of both $\cal{L}^s$ and $\cal{L}^u$ is connected.  
\end{defn}

A \emph{turn} of $\gamma$ is a material vertex $v$ of $\gamma$. At $v$, there is a small rectangle $R$ with $v$ as a corner such that $\gamma$ contains two sides of $R$. We call a turn \emph{left} or \emph{right} if $R$ is locally to the left or right of $\gamma$. We say that $\gamma$ makes a \emph{U-turn} if it has three consecutive edges that are sides of a common rectangle in $\cal{L}$. A polygonal path is geodesic if and only if it has no U-turns \cite[Lemma 6.10]{loom_new_arxiv}.

\begin{figure}[h]
    \centering
    \includegraphics[width=0.6\linewidth]{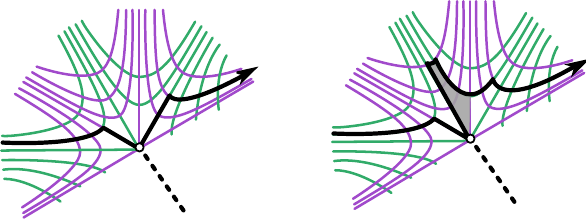}
    \caption[A geodesic in a loom space.]{Left: A geodesic in $\cal{L}$ as it passes through a cusp, winding around two orthants. A dotted line represents the `branch cut'. The orthants of the cusp wind around either side of this cut. Right: A polygonal path that has a U-turn shaded in grey. One can eliminate U-turns one-by-one in this path to obtain the geodesic on the left.}
    \label{fig:geodesic}
\end{figure}

\begin{defn}\label{def:between}
 For $p,\,q,\,r\in \cal{L}\cup\cusps(\cal{L})$, say that $q$ is \emph{orthant-between} $p$ and $r$ if $p$ and $r$ lie in non-adjacent closed orthants of $q$. Say that $q$ is \emph{between} $p$ and $r$ if in every neighbourhood of $q$, there is a point that is orthant-between $p$ and $r$.
\end{defn}

We note that \Cref{def:between} is the definition from the recent corrected version \cite[Definition 6.16]{loom_new_arxiv}.

\begin{lemma}\label{lem:omni_orthants}
    Suppose that $p,q,r \in\cal{L}\cup\Delta(\cal{L})$.
    \begin{enumerate}[label=\roman*)]
        \item All geodesics from $p$ to $q$ are contained in a common closed orthant of $p$ and a common closed orthant of $q$.
        
        \item There is a geodesic from $p$ to $r$ through $q$ if and only if $q$ is between $p$ and $r$.

        \item If $q$ is between $p$ and $r$, then both $q$ and $r$ lie in a common closed orthant of $p$. 
    \end{enumerate}
\end{lemma}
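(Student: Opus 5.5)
We treat the lemma as a collection of consequences of the geodesic theory of \cite[\S4, \S6]{loom_new_arxiv}, and prove the three items in order, each using the previous ones.

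For (i), let $\gamma$ be a geodesic from $p$ to $q$. We first show that $\gamma$ leaves $p$ into a single closed orthant $O$ of $p$ and never returns to $\cal{L}^s(p)\cup\cal{L}^u(p)$. By \Cref{def:geodesic}, $\gamma$ meets each of $\cal{L}^s(p)$ and $\cal{L}^u(p)$ in a connected set, namely its initial segment (a point, or its first edge). If $\gamma$ crossed a boundary leaf of $O$ later, that leaf would meet $\gamma$ in two components, contradicting geodesicity. When $p$ is a cusp, the boundary leaves are the cusp leaves in $\cal{L}^s(p)$ and $\cal{L}^u(p)$, indexed by $\bbZ$ via \cite[Lemma 6.15]{loom_new_arxiv}. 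The same connectedness argument applies to each such leaf, so $\gamma$ stays in the closed orthant it initially enters.

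Next we show that the orthant $O$ does not depend on $\gamma$. Take two geodesics $\gamma,\gamma'$ from $p$ to $q$. Since $q$ lies in $O$ and also in the closed orthant $O'$ entered by $\gamma'$, we get $O=O'$ unless $q$ lies on a common boundary leaf. In that case both closed orthants adjacent to that leaf contain $q$, and we can choose either one as the common orthant. Applying the same argument to the reversed geodesics gives the statement at $q$.

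For (ii), this is the corrected statement \cite[Lemma 6.17 / Definition 6.16]{loom_new_arxiv}, which we would cite; we sketch the idea in case it must be reproved. Suppose $q$ is orthant-between $p$ and $r$. Concatenate geodesics $p\to q$ and $q\to r$. By (i), they lie in non-adjacent closed orthants of $q$. At $q$ the path therefore turns consistently, and at a cusp it winds through at least one intermediate orthant, so no U-turn is created at $q$. Hence the concatenation is geodesic by \cite[Lemma 6.10]{loom_new_arxiv}. For the limiting notion of "between", approximate $q$ by points $q_n$ that are orthant-between $p$ and $r$, and use local finiteness of rectangles to pass the geodesics through $q_n$ to a geodesic through $q$.

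Conversely, suppose a geodesic from $p$ to $r$ passes through $q$. By (i), its two halves lie in closed orthants of $q$, and absence of a U-turn forces them to be non-adjacent whenever $q$ is a turn or a cusp. If $q$ is interior to an edge, then nearby points just off that edge are orthant-between $p$ and $r$. The main obstacle is this case analysis at $q$ when $q$ lies in the interior of an edge, or when $q$ is a vertex lying on a leaf through $p$ or $r$. This is exactly why the definition uses neighbourhoods.

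For (iii), by (ii) there is a geodesic $\gamma$ from $p$ to $r$ through $q$. Its initial subpath from $p$ to $q$ is itself a geodesic. By (i), all of $\gamma$ lies in one closed orthant of $p$, and that orthant contains both $q$ and $r$.
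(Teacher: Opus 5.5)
Your argument follows the paper's proof: for (i) you show that a geodesic leaving its initial closed orthant of $p$ would meet a leaf through $p$ in a disconnected set, which is what the paper calls a forced U-turn. For the boundary case, the missing justification is that a geodesic ending on a leaf through $p$ must be the leaf arc. Your proof of (ii) cites \cite[Lemma 6.17]{loom_new_arxiv}, as the paper does, and (iii) applies (i) to the geodesic that (ii) provides.

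Your optional sketch of (ii) is not needed, but its converse step is false. Let $q$ be a material corner of an edge rectangle whose cusp corners are $v$ and $w$. A geodesic can run along the two sides through $q$ and then wind past the remaining two sides at $v$ and at $w$. Both of its halves then lie in the same closed orthant of $q$, even though $q$ is a turn. So material turns beside cusps, and not only edge-interior points, are exactly where the neighbourhood definition of betweenness is needed.
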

\begin{proof}
    For i), suppose first that $q$ lies on the interior of a closed orthant of $p$. Then any geodesic from $p$ to $q$ must lie in the closure of this orthant, or it would be forced to take a U-turn. If $q$ lies on a leaf through $p$, then all geodesics are leaf arcs, and the claim holds.
    
    Item ii) is \cite[Lemma 6.17]{loom_new_arxiv}.
    
    Finally, suppose that $q$ is between $p$ and $r$. Take a geodesic from $p$ to $r$ through $q$, given by ii). All such geodesics lie in a common orthant of $p$ by i). So $q$ lies in this same orthant of $p$, giving iii). 
\end{proof}

\begin{defn}\label{def:hulls}
    For $p,\,q\in\cal{L}\cup \cusps(\cal{L})$, the \emph{convex hull} $H(p,q)\subset \cal{L}\cup \cusps(\cal{L})$ is the set of points between $p$ and $q$.
\end{defn}

\begin{lemma}\label{lem:geodesics}
    Let $p,\,q\in \cal{L}\cup \cusps(\cal{L})$. Then:
    \begin{enumerate}[label=\roman*)]
        \item $H(p,q)$ is the union of all geodesics from $p$ to $q$.
        \item $\partial H(p,q)$ is the union of the unique leftmost and unique rightmost geodesics. The leftmost geodesic takes only right turns, and the rightmost geodesic only left turns. 
    \end{enumerate}
\end{lemma}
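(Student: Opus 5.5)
Part i) follows from \Cref{lem:omni_orthants} ii) and \Cref{def:hulls}, and part ii) comes from a sweeping argument in the product-like region of $p$ and $q$ determined by \Cref{lem:omni_orthants} i).

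For i), suppose first that $x\in H(p,q)$, so $x$ is between $p$ and $q$. By \Cref{lem:omni_orthants} ii) there is a geodesic from $p$ to $q$ through $x$, so $x$ lies on a geodesic. Conversely, let $\gamma$ be a geodesic from $p$ to $q$ and $x\in\gamma$. Then $\gamma$ is itself a geodesic from $p$ to $q$ through $x$. Applying the reverse direction of \Cref{lem:omni_orthants} ii), $x$ is between $p$ and $q$. The endpoints $p,q$ need a brief check against \Cref{def:between}, handled by the neighbourhood clause. So $H(p,q)$ is exactly the union of geodesics.

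For ii), I would first use \Cref{lem:omni_orthants} i): every geodesic from $p$ to $q$ lies in a common closed orthant $Q_p$ of $p$ and a common closed orthant $Q_q$ of $q$. Hence $H(p,q)\subset Q_p\cap Q_q$, a region bounded by leaf arcs. Orient everything from $p$ to $q$ and order geodesics by which side of one another they lie on. Because geodesics meet each leaf in a connected set, two geodesics meet any transverse leaf in intervals that can be compared.

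To construct the rightmost geodesic, I would start at $p$ and follow the right boundary leaf of $Q_p$. At each stage, turn left exactly when continuing straight would leave $Q_q$ or the hull. Equivalently, take the right boundary of $H(p,q)$ and show it is a polygonal path with no U-turns, hence a geodesic by \cite[Lemma 6.10]{loom_new_arxiv}.

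Uniqueness and the turning claim then go together. Suppose the rightmost geodesic $\gamma$ made a right turn at a material vertex $v$. The small rectangle $R$ at $v$ lies to the right of $\gamma$. One could then cut the corner through $R$, replacing the two sides of $R$ along $\gamma$ with the opposite two sides. The resulting path still meets each leaf connectedly, so it is still a geodesic, and it lies strictly further right, which is a contradiction. The leftmost case is symmetric. Finally, $\partial H(p,q)$ is the union of these two extremal geodesics. Any point of the hull lies on some geodesic, and that geodesic is trapped between the two extremal ones, because crossings can be resolved by swapping segments while preserving geodesicity.

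I expect the main obstacle to be showing that the boundary of $H(p,q)$ really is a geodesic. The difficulty is at cusps, where the path may wind through several orthants, and where the local corner-cutting and segment-swapping arguments must respect the branch-cut structure. Here I would rely on the corresponding statements in \cite[\S4]{loom_new_arxiv}, citing them where possible rather than reproving them.
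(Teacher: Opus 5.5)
The paper gives no argument of its own for this lemma: both items are quoted from \cite[Lemma 6.23]{loom_new_arxiv}. Your item i) takes a slightly different and more economical route. It is just \Cref{lem:omni_orthants} ii), that is \cite[Lemma 6.17]{loom_new_arxiv}, read through \Cref{def:hulls}, and since that lemma precedes this one there is no circularity.

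For item ii), your corner-cutting argument is a correct, self-contained proof of the turning statement once an extremal geodesic is known to exist. It does not, however, give uniqueness, despite your remark that the two go together. The rest of the sweep is not yet a proof:
\begin{itemize}
\item Building the rightmost geodesic by turning \emph{when continuing straight would leave the hull} presupposes the shape of $\partial H(p,q)$, which is what you are trying to establish.
\item Resolving crossings pairwise gives maxima of finitely many geodesics. It does not show that the supremum over all geodesics is realised by a polygonal path with finitely many edges. That is where the loom axioms and the winding at cusps genuinely enter.
\item Saying every geodesic is \emph{trapped between the extremes} only shows that $H(p,q)$ lies in the region bounded by the leftmost and rightmost geodesics. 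It does not show that this region has no holes, and you need that for $\partial H(p,q)$ to equal their union.
\end{itemize}
These points are exactly the content of \cite[Lemma 6.23]{loom_new_arxiv}. So your item ii) ultimately rests on the same citation as the paper's, and it would be cleaner to present it that way, keeping the corner-cutting remark only as supplementary justification for the turn directions.
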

\begin{proof}
    These are proved in \cite[Lemma 6.23]{loom_new_arxiv}.
\end{proof}

\begin{lemma}\label{lem:hulls}
    Let $p,\,q\in \cusps(\cal{L})$. Then the unique leftmost and unique rightmost geodesics forming $\partial H(p,q)$ meet at a unique ordered tuple $p=c_0, c_1,\dots c_n = q$ for $c_i\in \cusps(\cal{L})$. Moreover, there are finite-sided disks $(P_i)_{i=0}^{n-1}\subset \cal{L}\cup \cusps(\cal{L})$ such that:
    \begin{enumerate}[label=\roman*)]
        \item $H(p,q) = \cup_i P_i$.
        
        \item We have  $P_i\cap P_{i+1} = \{c_{i+1}\}$ and  $P_i\cap P_j$ is empty for $\vert i-j\vert>1$.
        
        \item $P_i = H(c_i,c_{i+1})$.

        \item For each $i$, any geodesic from $c_i$ to $c_{i+1}$ has edges pointing in a common direction along each foliation.
        \item $P_i$ and $P_{i+1}$ lie in distinct, non-adjacent closed orthants of $c_{i+1}$. Moreover, $P_i$ and $P_{i+1}$ do not share an adjacent closed orthant. 
    \end{enumerate}
        %
        %
        %
        %
        
    For $p,q\in \cal{L}\cup \cusps(\cal{L})$ not necessarily cusps, we obtain a similar decomposition, now allowing $P_0$ and $P_{n-1}$ to be single leaf segments. 
\end{lemma}

\begin{proof}
We prove the claim for $p,q\in\cusps(\cal{L})$. The material case is similar. 
 In the proof of Item 4 of \cite[Lemma 6.23]{loom_new_arxiv}, they show that the leftmost and rightmost geodesics meet at a finite set of cusps $c_i$, and that the hulls between these cusps are finite-sided disks. Let the $P_i$ be these disks, ordered as they are traversed by one of the boundary geodesics. They also show that these disks have disjoint sets of material points.
 
 This setup gives items  i), ii) and iii). 
    %
    %
    %

    Let $\gamma$ be a geodesic in the interior of a given $P_i$. Then a change of direction in the (un)stable edges of $\gamma$ would force a U-turn. For a geodesic in $P_i$ that meets the boundary, we can push an edge off of the boundary across a small rectangle to obtain a geodesic in the interior of $P_i$. This does not change the direction along any edges, and so we obtain iv).

    For v), suppose that $P_i$ and $P_{i+1}$ lie in either adjacent closed orthants or the same closed orthant of $c_{i+1}$. Then $P_i\cap P_{i+1}$ contains material points, contradicting iii). Suppose then that $P_i$ and $P_{i+1}$ have a common adjacent orthant. Then there is a small rectangle in this orthant with $c_{i+1}$ as a corner. We can reroute either the leftmost or rightmost geodesic from $p$ to $q$ across $R$. This gives another geodesic that is further left or further right, a contradiction. 
\end{proof}

\begin{figure}[h]
    \centering
    \includegraphics[]{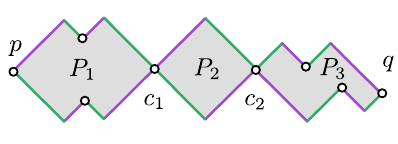}
    \caption[A convex hull.]{The structure of a convex hull between $p$ and $q$ given by \Cref{lem:hulls}.}
    \label{fig:hulls}
\end{figure}

See \Cref{fig:hulls}. We call a polygon that is the hull of two points or cusps a \emph{staircase polygon}.

For $i\neq0, n$, we call $c_i$ a \emph{cinch cusp} of $H(p,q)$. If either or both of $c_0=p$ and $c_n=q$ are cusps, we also call these cinch cusps. We also use $H(p,q)$ to encode the natural direction on the hull from $p$ to $q$. That is, $H(q,p)$ is the same underlying set as $H(p,q)$ but with the tuples $(P_i)$ and $(c_i)$ reversed.

\begin{lemma}\label{lem:geodesic_concat}
    For $p\in\cal{L}\cup\cusps(\cal{L})$, let $\alpha$ and $\beta$ be geodesics that end and start at $p$, respectively. Suppose that $\alpha$ and $\beta$ are contained in non-adjacent closed orthants of $p$. Then $\alpha$ concatenated with $\beta$ is a geodesic.
\end{lemma}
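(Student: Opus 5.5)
We verify \Cref{def:geodesic} directly: the concatenation $\gamma = \alpha\cdot\beta$ is a polygonal path, and we must show that its intersection with every leaf $\ell$ of $\cal{L}^s$ or $\cal{L}^u$ is connected. Since $\alpha$ and $\beta$ are already geodesics, $\ell\cap\alpha$ and $\ell\cap\beta$ are each connected (possibly empty). So the only way $\ell\cap\gamma$ can be disconnected is if $\ell$ meets both $\alpha$ and $\beta$ in sets that do not join up at $p$. The plan is to show that any leaf meeting both $\alpha$ and $\beta$ must pass through $p$, or be one of the boundary leaves at $p$. In that case both intersections contain $p$ and their union is connected.

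Let $O_\alpha$ and $O_\beta$ be the non-adjacent closed orthants of $p$ containing $\alpha$ and $\beta$. When $p$ is material there are four orthants, so $O_\alpha$ and $O_\beta$ are opposite. When $p$ is a cusp, the orthants are indexed by $\bbZ$, and non-adjacent means their indices differ by at least two. The key step is a separation claim. A leaf $\ell$ not contained in $\cal{L}^s(p)\cup\cal{L}^u(p)$ lies in a single orthant of $p$, because it is connected and disjoint from the boundary leaves (leaves of one foliation are disjoint, and a leaf of the other foliation crosses a given leaf at most once). A leaf meeting the boundary leaves at most crosses one of them, so it enters only two adjacent orthants. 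More precisely, a stable leaf $\ell$ crosses at most one unstable leaf of $p$. Hence $\ell$ lies in the union of two adjacent closed orthants, which share that unstable boundary leaf. Since $O_\alpha$ and $O_\beta$ are non-adjacent and distinct, $\ell$ cannot meet the interiors of both. It can meet both only along boundary leaves common to them. For non-adjacent orthants the only common points are $p$ itself, so $\ell$ must be a leaf through $p$. For a cusp, $O_\alpha\cap O_\beta$ is empty among material points.

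This leaves the case where $\ell$ is a boundary leaf of $p$, meaning one of the leaves of $\cal{L}^s(p)$ or $\cal{L}^u(p)$. Here $\ell\cap\alpha$ is a connected arc of $\ell$. By \Cref{lem:omni_orthants} i), this arc must contain the endpoint $p$: otherwise $\alpha$ would leave $\ell$ and return to $p$, creating a second component. The same holds for $\beta$. Both arcs then end at $p$, so their union is connected. For a cusp $p$, distinct boundary leaves of non-adjacent orthants are different leaves, so at most one of $\alpha$ and $\beta$ meets a given such leaf away from $p$.

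Alternatively, we can use the U-turn criterion \cite[Lemma 6.10]{loom_new_arxiv}. A U-turn in $\gamma$ must involve the vertex $p$, since $\alpha$ and $\beta$ have none. Three consecutive edges around $p$ bounding a rectangle would force $\alpha$ and $\beta$ to lie in the same or adjacent closed orthants of $p$. When $p$ is a cusp it is not even a turn, and no rectangle has a cusp in its interior side. The main obstacle is the case where $p$ is a cusp. There we must check carefully that leaves from non-adjacent orthants around the cusp cannot coincide. This uses that each leaf has at most one cusp end \cite[Lemma 3.10]{loom_new_arxiv}, together with the $\bbZ$-indexing of the orthants.
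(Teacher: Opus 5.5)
Your main argument is the paper's: a leaf meeting one closed orthant of $p$ can only extend into an adjacent closed orthant, so no leaf other than those through a material $p$ meets both $\alpha$ and $\beta$. You also correctly handle the leaves through $p$, where both intersections contain $p$, a case the paper's one-line proof leaves implicit.

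Two small slips do not affect this. First, the opening sentence of your separation claim should say ``disjoint from'' rather than ``not contained in'' $\cal{L}^s(p)\cup\cal{L}^u(p)$. Second, in the optional U-turn paragraph, the assertion that no rectangle has a cusp in the interior of a side is false: the paper notes that rectangle sides may contain a cusp, as happens when two adjacent cusp rectangles are glued along a cusp side. That alternative would therefore need more care.
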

\begin{proof}
    A leaf that meets one closed orthant of $p$ can only extend into an adjacent closed orthant. Thus, $\alpha$ and $\beta$ do not intersect a common leaf. Hence their concatenation is a geodesic.
\end{proof}
\begin{lemma}\label{lem:follow_geodesics}
    Let $p,q\in \cal{L}\cup \cusps(\cal{L})$. Let $x\in H(p,q)$, and suppose that $\alpha$ is a geodesic from $p$ to $x$ and $\beta$ a geodesic from $x$ to $q$. Then concatenating $\alpha$ with $\beta$ is a geodesic from $p$ to $q$. 
\end{lemma}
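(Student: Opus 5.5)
The plan is to show directly that the concatenation $\gamma=\alpha\cdot\beta$ meets each leaf of $\cal{L}^s$ and $\cal{L}^u$ in a connected set, as \Cref{def:geodesic} requires. Since $x\in H(p,q)$, \Cref{lem:geodesics} i) gives a geodesic $\delta$ from $p$ to $q$ through $x$. Write $\delta=\delta_1\cdot\delta_2$, with $\delta_1$ running from $p$ to $x$ and $\delta_2$ from $x$ to $q$. The strategy is to compare $\alpha$ with $\delta_1$ and $\beta$ with $\delta_2$, using orthants of $x$.

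The first step handles the generic case, where $x$ is orthant-between $p$ and $q$. Then $p$ and $q$ lie in non-adjacent closed orthants $O_p$ and $O_q$ of $x$. By \Cref{lem:omni_orthants} i), every geodesic from $x$ to $p$, in particular the reverse of $\alpha$, lies in a common closed orthant of $x$. Since $p$ lies in that orthant, I expect to identify it with $O_p$. One has to be careful when $p$ lies on a leaf through $x$ and so sits in two closed orthants. There I would choose the orthant so that it contains the reverse of $\delta_1$ as well, and check that it remains non-adjacent to the orthant used for $q$. Likewise $\beta$ lies in $O_q$. Then \Cref{lem:geodesic_concat} applies directly, and $\alpha\cdot\beta$ is a geodesic.

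The second step handles the general case, where $x$ is only between $p$ and $q$, that is, a limit of orthant-between points; this occurs, for example, when $x$ lies on the boundary of $H(p,q)$. If $x$ is a cinch cusp $c_i$, then \Cref{lem:hulls} v) places $P_{i-1}\ni p$-side and $P_i\ni q$-side in non-adjacent closed orthants of $c_i$. By \Cref{lem:hulls} i) and iii), $\alpha$ lies in $\bigcup_{j<i}P_j$ and $\beta$ in $\bigcup_{j\ge i}P_j$, so \Cref{lem:geodesic_concat} finishes this case. Otherwise $x$ is material, and it lies in some single $P_i$, possibly on its boundary. Here $\gamma$ fails to be a geodesic only if it has a U-turn (\cite[Lemma 6.10]{loom_new_arxiv}), and any U-turn must involve the last edge of $\alpha$ and the first edge of $\beta$ at $x$. \Cref{lem:hulls} iv) says that all geodesics within $P_i$ point in a common direction along each foliation. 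So the last edge of $\alpha$ and the first edge of $\beta$, restricted to $P_i$ near $x$, travel in the same stable and unstable directions. That is incompatible with three consecutive edges bounding a rectangle, which would require a reversal of direction along one foliation.

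The main obstacle is this material boundary case. One must make sure that $\alpha$ and $\beta$ actually stay within $H(p,q)$ near $x$, so that \Cref{lem:hulls} iv) applies. I would prove this first: a subsegment of a geodesic from $p$ to $x$ consists of points between $p$ and $x$, by \Cref{lem:omni_orthants} ii). Combining this with $x$ being between $p$ and $q$ and with the orthant containment of \Cref{lem:omni_orthants} iii) should show that such points are between $p$ and $q$, hence lie in $H(p,q)$. Once this containment is established, the direction argument in the staircase polygon $P_i$ completes the proof.
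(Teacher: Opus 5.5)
Your first step is sound: when $x$ is orthant-between $p$ and $q$, \Cref{lem:omni_orthants} i) places $\alpha$ and $\beta$ in non-adjacent closed orthants of $x$, and \Cref{lem:geodesic_concat} applies.

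The gap is in the second step. That step is not a degenerate case, since it includes points of $\partial H(p,q)$, as you note. Everything there rests on $\alpha$ and $\beta$ lying in $H(p,q)$. You need this to place $\alpha$ in $\bigcup_{j<i}P_j$ in the cinch-cusp subcase, and to invoke the staircase structure of $P_i$ in the material subcase.

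Your proposed justification is that a point $y$ between $p$ and $x$ is between $p$ and $q$, i.e.\ $H(p,x)\subseteq H(p,q)$. This is essentially the lemma itself. By \Cref{lem:omni_orthants} ii), $y\in H(p,q)$ means some geodesic from $p$ to $q$ passes through $y$, and the path you would use is exactly $\alpha\beta$. The tools you cite do not break the circle. \Cref{lem:omni_orthants} iii) only says that $y,x$ lie in a common closed orthant of $p$ and that $x,q$ do, which is much weaker than $y$ being between $p$ and $q$.

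Even granting the containment, \Cref{lem:hulls} iv) concerns geodesics from $c_i$ to $c_{i+1}$. Applying it to the tail of $\alpha$ and the head of $\beta$ requires that these extend to such geodesics, which is again the concatenation property under proof. Also, a U-turn involves three consecutive edges, so controlling only the last edge of $\alpha$ and the first edge of $\beta$ is not enough. Separately, your case split passes from cinch cusps straight to material points, omitting other cusps of $H(p,q)$.

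The paper avoids locating $\alpha$ in $H(p,q)$ at all. Let $\delta=\delta_1\delta_2$ be a geodesic through $x$. Compare $\alpha$ with $\delta_1$ inside $H(p,x)$, where both lie automatically by \Cref{lem:geodesics} i), and compare $\beta$ with $\delta_2$ inside $H(x,q)$.

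\begin{itemize}
\item The staircase structure of the polygons of $H(p,x)$ forces any geodesic from $p$ to $x$ to cross every non-boundary leaf of each polygon. So $\alpha$ and $\delta_1$ meet the same leaves up to finitely many exceptions, and likewise $\beta$ and $\delta_2$.
\item A U-turn in $\alpha\beta$ yields infinitely many leaves, each meeting both $\alpha$ and $\beta$ in a disconnected set. All but finitely many of these then meet both $\delta_1$ and $\delta_2$.
\item A leaf meeting both halves of the geodesic $\delta$ in a connected set must pass through $x$, and only two leaves do. This contradicts $\delta$ being a geodesic.
\end{itemize}

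This argument is uniform in $x$ and needs no case analysis on orthants.
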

\begin{proof}
    Since $x$ is between $p$ and $q$, there is a geodesic $\gamma$ from $p$ to $q$ through $x$. Let $\gamma_p$ and $\gamma_q$ be the two halves of $\gamma$ that end and start at $x$, respectively. Both $\gamma_p$ and $\alpha$ lie in $H(p,x)$. For each polygon $P_i$ in the chain forming $H(p,x)$, the staircase structure implies that both $\gamma_p$ and $\alpha$ meet every non-boundary leaf of $P_i$. Thus, they meet the same set of leaves except for finitely many leaves containing the edges of $\partial H(p,x)$. Similarly, this is true of $\gamma_q$ and $\beta$. If $\alpha\beta$ is not a geodesic, it contains a U-turn, and thus meets infinitely many leaves in two distinct components. Then $\gamma = \gamma_p\gamma_q$ also meets all but finitely many such leaves in two distinct components, contradicting that $\gamma$ is a geodesic. 
    %
    %
\end{proof}

\subsection{Medians}
Given a triple of points in $\cal{L}\cup\cusps(\cal{L})$, a \emph{median} is a point between every pair of points in the triple. We now prove a technical lemma: cusps of loom spaces have medians. The proof breaks into several cases. We isolate an easy one.

\begin{claim}\label{claim:easy_median}
    Let $p,\,q,\,r\in  \cusps(\cal{L})$ be distinct. If $q$ is between $p$ and $r$, then
    \[
    H(p,q)\cap H(q,r)\cap H(r,p) = \{q\}.
    \]
\end{claim}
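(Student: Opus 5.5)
Since $q$ is an endpoint of both $H(p,q)$ and $H(q,r)$, it trivially lies in those two hulls. By hypothesis $q$ is between $r$ and $p$, so $q\in H(r,p)$. Hence $q$ lies in the triple intersection. The content of the claim is the reverse inclusion: we show that $H(p,q)\cap H(q,r)=\{q\}$. This already forces the triple intersection to be $\{q\}$.

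By \Cref{lem:omni_orthants} ii), choose a geodesic $\gamma$ from $p$ to $r$ through $q$. Write $\gamma=\gamma_p\gamma_r$, where $\gamma_p$ runs from $p$ to $q$ and $\gamma_r$ runs from $q$ to $r$. By \Cref{lem:omni_orthants} i), every geodesic from $q$ to $p$ lies in a common closed orthant $O_p$ of $q$. Likewise, every geodesic from $q$ to $r$ lies in a common closed orthant $O_r$ of $q$. Since $q$ is a cusp, being between $p$ and $r$ should upgrade to being orthant-between. Indeed, points near a cusp that are orthant-between $p$ and $r$ can be taken to lie near the cusp sides of the relevant cusp rectangles, and these determine orthants of $q$ itself. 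So $O_p$ and $O_r$ are non-adjacent closed orthants of $q$. Alternatively, we can argue directly: if $O_p$ and $O_r$ were equal or adjacent, then $\gamma$ near $q$ would meet a common leaf twice, or make a U-turn through $q$, contradicting that $\gamma$ is a geodesic.

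By \Cref{lem:geodesics} i), $H(p,q)$ is the union of the geodesics from $p$ to $q$, so $H(p,q)\subset O_p$. Similarly $H(q,r)\subset O_r$. Two non-adjacent closed orthants of a cusp are disjoint as subsets of $\cal{L}$. Their boundary leaves are distinct cusp leaves at $q$: a leaf can have a cusp at only one end, and only adjacent orthants share a boundary leaf. Hence $O_p\cap O_r$ contains no material point, and their only common ideal point is $q$. To rule out another cusp $c\neq q$ lying in both hulls, observe that such a $c$ would lie on $\gamma_p$ or on some geodesic from $p$ to $q$. Since it is a cusp in $O_p$ distinct from $q$, it is not in $O_r$. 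So $H(p,q)\cap H(q,r)=\{q\}$.

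The main obstacle is the step showing that $O_p$ and $O_r$ are non-adjacent. This is where betweenness (the limiting definition, \Cref{def:between}) must be converted into orthant-betweenness at the cusp $q$ itself. I would try first to use the geodesic $\gamma$ together with \Cref{lem:geodesic_concat}-style reasoning, and only fall back on the local structure of cusp rectangles if needed. A secondary concern is making precise that non-adjacent closed orthants of a cusp meet only at the cusp; this should follow from the $\bbZ$-indexing of orthants in \cite[Lemma 6.15]{loom_new_arxiv}.
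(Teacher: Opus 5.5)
Your proposal is correct and follows essentially the same route as the paper. The paper also upgrades betweenness at the cusp $q$ to orthant-betweenness, places $H(p,q)$ and $H(q,r)$ in non-adjacent closed orthants of $q$, and concludes that these hulls meet only in $q$; it asserts the upgrade step without justification, so your sketch of it (via the geodesic through $q$ having no U-turn there) goes slightly beyond what the paper provides.
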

\begin{proof}
    Since the points are all cusps, $q$ is orthant-between $p$ and $r$, so $q\in H(p,r)$. Since $H(p,q)$ and $H(q,r)$ are contained in non-adjacent closed orthants of $q$, we have $H(p,q)\cap H(q,r) = \{q\}$. So, the triple intersection is $\{q\}$. 
\end{proof}
For the general case, most of the work is in the following local treatment. 

\begin{lemma}\label{lem:local_median}
    Let $a,b,c\in\Delta(\cal{L})$ be distinct. Suppose that $H(b,a)$ and $H(b,c)$ each consist of one polygon. Then there is $x\in H(a,b)\cap H(b,c)\cap H(c,a)$.
    %
\end{lemma}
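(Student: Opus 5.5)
The plan is to find the median on the boundary of $H(b,a)$ or $H(b,c)$ by following geodesics out of $b$. Since $H(b,a)=P$ and $H(b,c)=Q$ are single staircase polygons, \Cref{lem:hulls} iv) applies to both: every geodesic from $b$ to $a$ has all its stable edges pointing one way and all its unstable edges pointing one way, and similarly for $b$ to $c$. By \Cref{lem:omni_orthants} i), $P$ lies in a single closed orthant $O_a$ of $b$ and $Q$ in a single closed orthant $O_c$ of $b$. I will split into cases according to the relative position of $O_a$ and $O_c$ around the cusp $b$.

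\textbf{Case 1: $O_a$ and $O_c$ are non-adjacent.} Then $b$ is orthant-between $a$ and $c$, so $b\in H(a,c)$. Clearly $b\in H(a,b)\cap H(b,c)$, so $x=b$ works; this is \Cref{claim:easy_median}.

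\textbf{Case 2: $O_a$ and $O_c$ are adjacent.} Say they share the boundary leaf $\ell$ of $b$. The idea is to find the point where the two polygons "separate" along $\ell$. Each polygon meets $\ell$ in a (possibly degenerate) initial segment starting at $b$, because the staircase boundary geodesic on that side leaves $b$ along $\ell$ or along the other boundary leaf of the orthant. Take $x$ to be the endpoint of the shorter of the two segments $P\cap\ell$ and $Q\cap\ell$; if one of them is degenerate, take $x=b$ and reduce to checking orthant positions. Then $x$ lies in both $P$ and $Q$. To show $x\in H(a,c)$, I will exhibit a geodesic from $a$ to $c$ through $x$. Choose a geodesic $\alpha$ from $a$ to $x$ inside $P$ and a geodesic $\beta$ from $x$ to $c$ inside $Q$; such geodesics exist by \Cref{lem:follow_geodesics} applied to $x\in H(b,a)$ and $x\in H(b,c)$, taking the portion after $x$. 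It then remains to check that $a$ and $c$ lie in non-adjacent closed orthants of $x$. Once that holds, \Cref{lem:geodesic_concat} shows $\alpha\beta$ is a geodesic, and \Cref{lem:omni_orthants} ii) gives $x\in H(a,c)$. For this I will use the monotone direction structure from \Cref{lem:hulls} iv): near $x$, $P$ continues off $\ell$ on one side and $Q$ on the other, with the leaf through $x$ transverse to $\ell$ separating them. I expect this to force $a$ and $c$ into opposite orthants of $x$. If $x$ turns out to be a cusp (for instance a corner of $P$ on $\ell$), then I will use the $\mathbb{Z}$-indexed orthants of the cusp in the same way.

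\textbf{Case 3: $O_a=O_c$.} Both polygons lie in the same orthant of $b$. Here I will take the two leftmost-or-rightmost boundary geodesics and follow them from $b$ until they diverge. Concretely, the boundary geodesics $\partial P$ and $\partial Q$ starting at $b$ share an initial polygonal segment, and I will let $x$ be the last point at which they agree, or the point where one polygon's boundary passes into the other's interior. Then $x\in P\cap Q$, and the argument of Case 2 shows that $a$ and $c$ lie in non-adjacent orthants of $x$.

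The main obstacle is verifying the non-adjacent-orthant condition at $x$ in Cases 2 and 3. This requires a careful analysis of the staircase shape: which turns the left and right boundaries make, and the fact that divergence at $x$ puts the remainders of $P$ and $Q$ on opposite sides of both leaves through $x$. The degenerate subcases, where $x$ is a cusp or a corner of a polygon, also need separate treatment. My first step would be to use the monotonicity in \Cref{lem:hulls} iv) to normalise directions, say so that both polygons go northeast from $b$, and then do the case check directly in those coordinates.
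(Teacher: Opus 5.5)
\textbf{Gap in Cases 2 and 3: the claim $x\in H(a,c)$ is not proved.} Your split by the relative position of the closed orthants of $b$ containing $H(b,a)$ and $H(b,c)$ is the paper's, and Case 1 is fine. In Cases 2 and 3, however, the one substantive claim, $x\in H(a,c)$, is only stated as an expectation, and the mechanism you propose would not prove it.

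Monotonicity of directions (\Cref{lem:hulls} iv)) plus the picture near $x$ cannot place $a$ in a particular orthant of $x$, because geodesics wind around cusps. In Case 2, normalise so that $\ell$ runs north from $b$, $H(b,a)$ lies to its west, and $x$ is the top of $H(b,a)\cap\ell$. The rightmost geodesic $r_a$ then leaves $x$ westward along $\mathcal{L}^u(x)$. Suppose that half-leaf ends at a cusp $d$. A path that winds around $d$ on the north side of $\mathcal{L}^u(x)$ and then continues west and north is a geodesic with every edge pointing in the allowed directions. Yet it ends in the northwestern orthant of $x$, which is adjacent to the northeastern orthant containing $c$.

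What rules this out is the extremality of the boundary geodesic, which your sketch never uses. The paper takes $y\in\ell$ just north of $x$. If $r_a$ left the southwestern orthant of $y$, it would have to cross the western half of $\mathcal{L}^u(y)$. Rerouting over $\ell$ and $\mathcal{L}^u(y)$ would then give a geodesic from $b$ to $a$ to the right of $r_a$, a contradiction. So $y$ is orthant-between $a$ and $c$, and $x\in H(a,c)$ because \Cref{def:between} only asks for such points in every neighbourhood of $x$. Your stronger goal, that $x$ itself is orthant-between, would need an extremality argument of the same kind.

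Separately, ``the endpoint of the shorter segment'' presupposes that $H(b,a)\cap\ell$ and $H(b,c)\cap\ell$ have different lengths. The paper gets this from the fact that a leaf contains at most one cusp.

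\textbf{Case 3 is not yet an argument, and your first candidate point fails.} Normalise so that both hulls lie in a northeastern orthant of $b$. Then the leftmost geodesics $l_a$ and $l_c$ both leave $b$ northward. Suppose $l_c$ turns east off $l_a$ at a material point $z$. Then for every $w$ near $z$, both $a$ and $c$ lie east of $\mathcal{L}^s(w)$, so $z\notin H(a,c)$.

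The median is instead the point where $l_c$, after turning away from $l_a$, meets the rightmost geodesic $r_a$. Its existence again needs extremality: $l_c$ cannot rejoin $l_a$, since rerouting over the small rectangle at the rejoining point would give a geodesic to the left of $l_c$. It also needs the separate subcase $c\in H(b,a)$, where one takes $x=c$.

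After that, $x\in H(a,c)$ must still be checked in two subcases:
\begin{itemize}
\item If $x$ is material, $l_c$ arrives eastward and $r_a$ northward, and one argues with a point just northeast of $x$ as in Case 2.
\item If $x$ is a cusp, the two hulls continue into non-adjacent orthants of $x$.
\end{itemize}
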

\begin{proof}
    We break into three successively longer cases depending on which closed orthants of $b$ contain the hulls $H(b,a)$ and $H(b,c)$.

    \begin{figure}[h]
        \centering
        \includegraphics[width=0.7\linewidth]{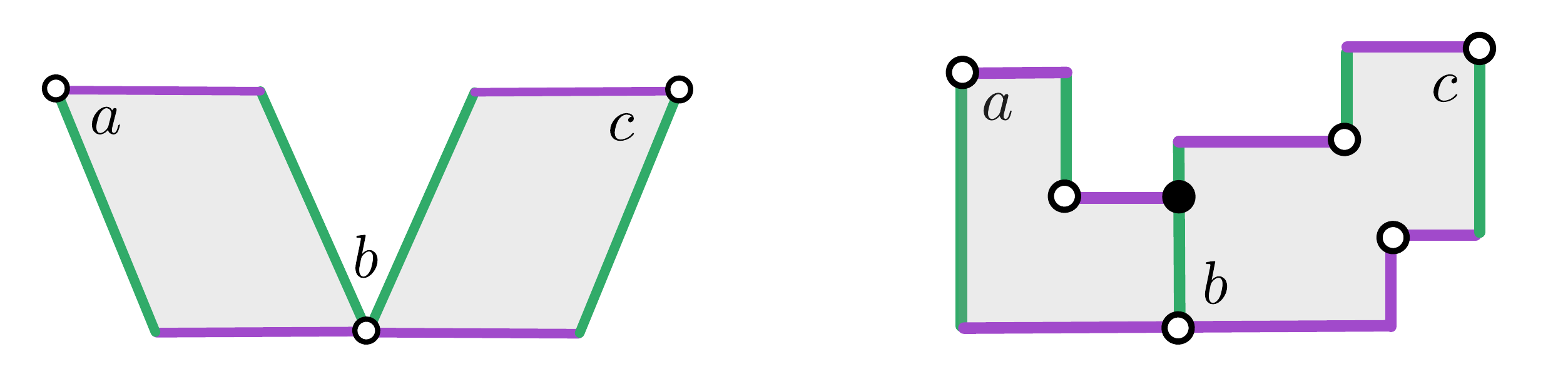}
        \caption[Local medians in distinct orthants.]{Case A (left) and Case B (right) of the local medians.}
        \label{fig:meds_first}
    \end{figure}
    \textbf{Case A:}  Suppose that $H(b,a)$ and $H(b,c)$ are in non-adjacent closed orthants of $b$. Then $b$ is between $a$ and $c$. Apply \Cref{claim:easy_median}. 

    \textbf{Case B:} Suppose that $H(b,a)$ and $H(b,c)$ are in adjacent closed orthants of $b$. Then $H(b,a)\cap H(b,c)$ is a single stable or unstable edge. Breaking symmetry, suppose it is a stable arc in a leaf $l\in \cal{L}^s$ directed northward from $b$. Since leaves in $\cal{L}$ do not contain more than one cusp, we may assume that $H(b,c)$ extends farther north along $l$ than $H(b,a)$. We may also assume that $H(b,c)$ lies in a northeastern orthant of $b$.
    
    Let $x$ be the northernmost point on this arc. Then $x\in H(a,b)\cap H(b,c)$. See \Cref{fig:meds_first}, right. Let $y\in l$ lie just north of $x$. Then $y\in H(b,c)$, and $c$ lies in the northeastern orthant of $y$.
    
    Let $r_a$ be the rightmost geodesic of $H(b,a)$. Then $r_a$ passes through $x$ and begins in the southwestern orthant of $y$. For $r_a$ to leave this orthant, it must cross the western half of a leaf $j = \cal{L}^u(y)$. Suppose that it does. Then $r_a$, $l$ and $j$ bound a staircase polygon. We can then reroute $r_a$ over $l$ and $j$ to give a geodesic from $b$ to $a$ that is to the right of $r_a$. This contradicts that $r_a$ is rightmost. So, $a$ lies in the southwestern orthant of $y$. Thus $y$ is orthant-between $a$ and $c$. So $x\in H(a,c)$.
    
    \textbf{Case C:} Suppose that $H(b,a)$ and $H(b,c)$ lie in the same closed orthant of $b$. Say it is a northeastern orthant.  Let $l_a$ and $r_a$ be the leftmost and rightmost geodesics bounding $H(b,a)$ (\Cref{lem:hulls}). Define similarly $l_c$ and $r_c$ bounding $H(b,c)$. Following $l_c$ and $l_a$, one of the two must either terminate or turn away from the other. Suppose it is $l_c$. If $l_c$ terminates where $l_a$ continues, then $c\in H(b,a)$. We can then take $x=c$ and be done. So, we may assume that $l_c$ turns to the right of $l_a$. 

    If $l_a$ and $l_c$ intersect after diverging, there is a small rectangle to the left of $l_c$ at this next intersection. Rerouting $l_c$ over this rectangle gives a geodesic from $b$ to $c$ that lies to the left of $l_c$, a contradiction. Since $l_c$ cannot terminate in the interior of $H(a,b)$, we deduce that $l_c$ intersects $r_a$ at some point $x$. Then $x\in H(a,b)\cap H(b,c)$. 
    
    
    If $x \in \{c,a\}$, then $x\in H(c,a)$ and we are done. So assume $x\not\in\{a,c\}$. Now there are two cases:

    \begin{itemize}
        \item Suppose that $x$ is material. If a northward edge of $l_c$ meets an eastward edge of $r_a$, then $l_c$ is to the right of $r_a$. This contradicts that $r_a$ is rightmost from $b$ to $a$. Thus $x$ is the intersection of an eastward edge of $l_c$ and a northward edge of $r_a$. See \Cref{fig:meds_same}, left.
        
        Now $l_c$ and $r_a$ must cross, since otherwise $l_c$ would make a left turn at $x$. Let $y$ be a point close to $x$ lying in the northeastern orthant of $x$. Once $r_a$ leaves a small neighbourhood of $y$, it enters the northwestern orthant of $y$. Arguing similarly to Case B, $r_a$ must remain in this orthant. Similarly, $l_c$ enters and remains in the southeastern orthant of $y$. So, $a$ and $c$ lie in non-adjacent closed orthants of $y$. Thus $y$ is orthant-between $a$ and $c$.  Therefore $x\in H(a,c)$.
        
        \item Suppose that $x$ is a cusp. Then $l_c$ and $r_a$ approach $x$ along distinct leaves that bound an orthant of $x$. See \Cref{fig:meds_same}, right. Since $x\neq a, c$, the hulls $H(b,c)$ and $H(b,a)$ continue beyond $x$ into distinct non-adjacent orthants of $x$. So $x$ is orthant-between $a$ and $c$, giving $x\in H(a,c)$.\qedhere
    \end{itemize}
    
    \begin{figure}[h]
        \centering
        \includegraphics[width=0.6\linewidth]{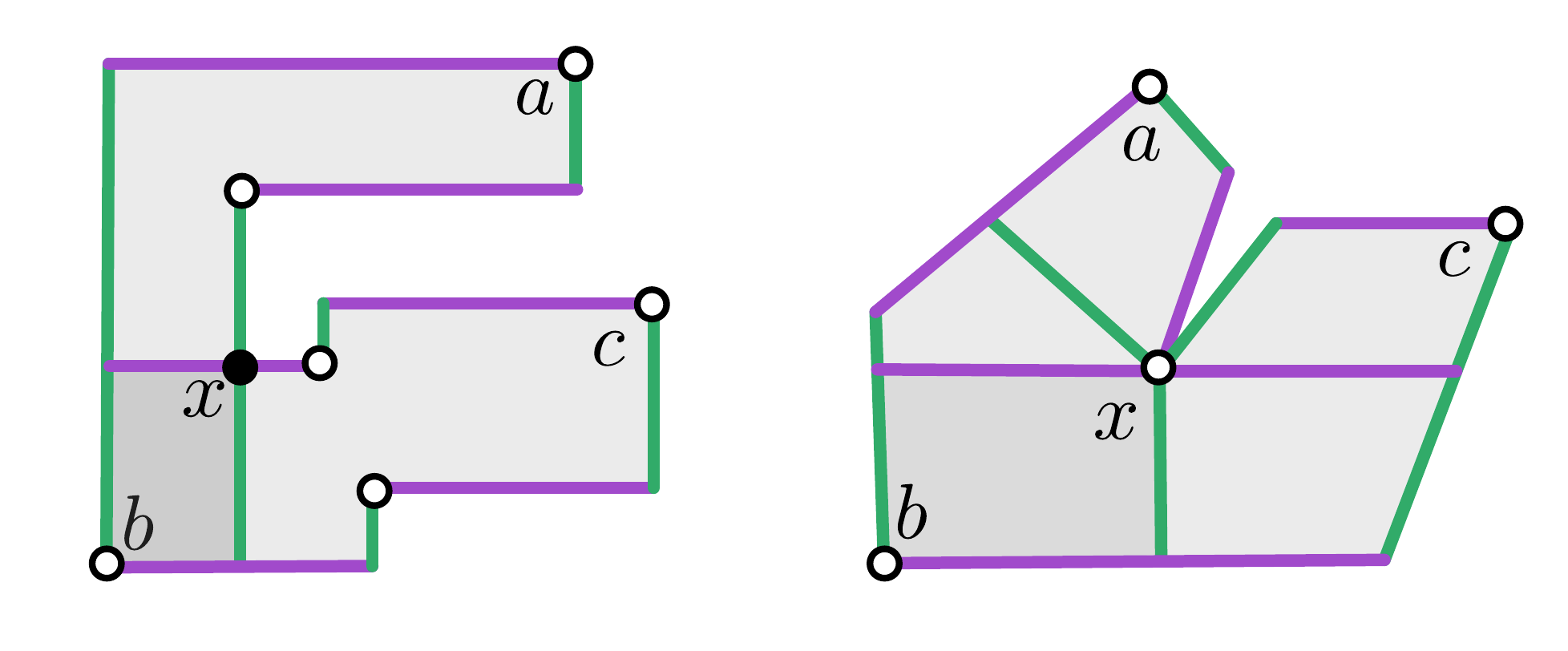}
        \caption[Local medians in the same orthant.]{The two cases of the local medians in the same orthant of $b$. Left: the material case. Right: the cusp case.}
        \label{fig:meds_same}
    \end{figure}
    
\end{proof}

\begin{lemma}\label{lem:medians}
    Let $p,\,q,\,r\in  \cusps(\cal{L})$ be distinct. Then
    \[
    H(p,q)\cap H(q,r)\cap H(r,p) \neq \varnothing. 
    \]
\end{lemma}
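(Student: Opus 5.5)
We reduce the general case to \Cref{lem:local_median} by working with the first polygons of the two hulls emanating from a common point. Write $H(q,p)$ as a chain of staircase polygons with cinch cusps $q=c_0,c_1,\dots,c_n=p$, and $H(q,r)$ as a chain with cinch cusps $q=d_0,d_1,\dots,d_m=r$, as in \Cref{lem:hulls}. The first step is to compare these chains from $q$. Let $k\geq 0$ be the largest index such that $c_i=d_i$ for all $i\leq k$. If $k=n$ then $p=c_n=d_n$ lies on $H(q,r)$, so $p$ is between $q$ and $r$ and \Cref{claim:easy_median} gives the median $p$; symmetrically if $k=m$. So assume $k<\min(n,m)$ and set $b=c_k=d_k$.

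The second step is to show that $b$ can replace $q$. Since $b$ is a cinch cusp of $H(q,p)$, it is between $q$ and $p$, and by \Cref{lem:follow_geodesics} and \Cref{lem:hulls}, $H(b,p)$ is the tail of the chain $H(q,p)$ starting at $P_k$, and similarly for $H(b,r)$. In particular $H(b,p)\subset H(q,p)$ and $H(b,r)\subset H(q,r)$. I then claim $H(r,p)\cap H(q,r)\cap H(q,p)\supseteq H(b,p)\cap H(b,r)\cap H(r,p)$, so it suffices to find a median of $b,p,r$. These three are distinct: $b\neq p,r$ by the choice of $k$, and $p\neq r$ by hypothesis.

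The third step is to localize. Let $a=c_{k+1}$ and $c=d_{k+1}$, so $H(b,a)=P_k$ and $H(b,c)$ are single polygons and \Cref{lem:local_median} produces $x\in H(a,b)\cap H(b,c)\cap H(c,a)$. If $a=c$, this contradicts maximality of $k$, so $a\neq c$. It remains to upgrade $x$ to a median of $b,p,r$. We have $x\in H(b,a)\subset H(b,p)$ and $x\in H(b,c)\subset H(b,r)$, and the content is to show $x\in H(p,r)$.

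I expect this last point to be the main obstacle. The plan is to build a geodesic from $p$ to $r$ through $x$ by concatenation. First take a geodesic from $p$ to $a$ inside $H(p,a)$, then from $a$ to $x$, then from $x$ to $c$, then from $c$ to $r$. By \Cref{lem:follow_geodesics}, the segment $a\to x\to c$ is a geodesic since $x\in H(a,c)$. The junctions at the cusps $a$ and $c$ are handled by \Cref{lem:geodesic_concat}, using \Cref{lem:hulls} v): the next polygon $P_{k+1}$ of $H(b,p)$ lies in a closed orthant of $a$ not adjacent to that containing $P_k$. One then checks that the geodesic $a\to x\to c$ near $a$ lies in the closed orthant of $a$ containing $H(a,b)$, or an orthant of $a$ that is still nonadjacent to $P_{k+1}$, which needs a case check along the lines of Cases B and C of \Cref{lem:local_median}. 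In the degenerate cases where $x=a$ or $x=c$, the same concatenation argument applies directly, again via \Cref{lem:geodesic_concat}. Then \Cref{lem:omni_orthants} ii) gives $x\in H(p,r)$, completing the proof.
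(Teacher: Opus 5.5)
\textbf{The gap is the degenerate case $x=a$ or $x=c$}, which your last sentence says is handled ``directly'' by \Cref{lem:geodesic_concat}. It is not.

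Your reduction (last common cinch cusp $b$, then $a=c_{k+1}$, $c=d_{k+1}$) is the paper's. When $x\notin\{a,c\}$ your concatenation works, and no case check is needed. Indeed $x\in H(a,b)$ gives $H(a,x)\subseteq H(a,b)$ by \Cref{lem:follow_geodesics}, so the geodesic through $x$ leaves $a$ in, or along the boundary of, the closed orthant containing $P_k$. \Cref{lem:hulls}~v), including the clause that consecutive polygons share no adjacent orthant, then separates it from $H(a,p)$; the same holds at $c$.

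Now suppose $H(b,c)\subset H(b,a)$, i.e.\ $c$ is a boundary cusp of $P_k$ that is not a cinch cusp of $H(q,p)$. Your reduction does not exclude this.
\begin{itemize}
\item Then $c$ is between $b$ and $a$, so by \Cref{claim:easy_median} the only point of $H(a,b)\cap H(b,c)\cap H(c,a)$ is $c$. You have no freedom in choosing $x$.
\item Let $O$ be the closed orthant of $c$ containing $H(c,a)$. \Cref{lem:hulls}~v), applied at the cinch cusp $c$ of $H(q,r)$, constrains the first polygon of $H(c,r)$ only relative to $H(c,b)$, not relative to $O$. That polygon can lie in an orthant adjacent to $O$; this is the right panel of \Cref{fig:bad_meds}.
\item In that configuration, every geodesic from $c$ to $p$ enters the interior of $O$, so lies in $O$. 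Every geodesic from $c$ to $r$ lies in the adjacent orthant (\Cref{lem:omni_orthants}~i)).
\item So $p$ and $r$ lie in adjacent closed orthants of the cusp $c$. For a cusp, between means orthant-between, so $c\notin H(p,r)$. No geodesic from $p$ to $r$ passes through $c$, and the concatenation $p\to a\to c\to r$ is not a geodesic.
\end{itemize}
Thus the local median of $a,b,c$ is simply not a median of $p,q,r$. The same failure occurs symmetrically when $H(b,a)\subset H(b,c)$ and $x=a$.

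\textbf{The missing idea is to move one cinch cusp further in this adjacent subcase.} Let $c'$ be the next cinch cusp of $H(c,r)$. The paper notes that $H(c,a)\subseteq P_k$ and $H(c,c')$ are single polygons in adjacent orthants of $c$. So $c,a,c'$ are in the configuration of Case B of \Cref{lem:local_median}, which yields a material point $x'$ on their shared leaf segment. Then:
\begin{itemize}
\item $x'\in H(c,a)\subseteq H(b,a)\subseteq H(q,p)$;
\item $x'\in H(c,c')\subseteq H(q,r)$;
\item since $x'$ is material, $x'\notin\{a,c'\}$, so your concatenation argument at $a$ and at $c'$ now goes through and gives $x'\in H(p,r)$.
\end{itemize}
This is exactly how the paper organises the proof. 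It handles non-containment by your argument, and containment with $a$ and $H(c,r)$ in non-adjacent orthants of $c$ by the same $x$. It treats the remaining adjacent-orthant subcase by passing to $c,a,c'$ as above.
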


\begin{proof}
    Let $p,q,r\in\cusps(\cal{L})$. If one of the three cusps lies between the other two, apply \Cref{claim:easy_median}. So suppose otherwise.
    
    Suppose that $d\in\cusps(\cal{L})$ is a cinch cusp of both $H(q,p)$ and $H(q,r)$. By \Cref{lem:hulls}, both $H(q,p)$ and $H(q,r)$ start with the same initial chain of polygons $H(q,d)$. Let $b\in\cusps(\cal{L})$ be the last cinch cusp of $H(q,p)$ such that $b$ is also a cinch cusp of $H(q,r)$. This includes the possibility that $b=q$.
     
     Let $a$ and $c$ be the two distinct cinch cusps along $H(q,p)$ and $H(q,r)$ immediately after $b$, respectively. This includes the possibilities $a=p$ and $c=r$. The hypothesis that no one of $p,\,q,\,r$ lies between the other two implies that $a,\,b,\,c$ are distinct. 
    
    We further suppose that neither $H(b,a)$ nor $H(b,c)$ contains the other. Note that $H(b,a)$ and $H(b,c)$ by definition consist of single polygons. So, we can apply \Cref{lem:local_median} to obtain a local median $x \in H(a,b) \cap H(b,c) \cap H(c,a)$. Then $x\in H(p,q)\cap H(q,r)$, so we must show that $x \in H(r,p)$. 

    Let $\gamma$ be a geodesic from $c$ to $a$ through $x$.  Since $a$ is a cinch cusp, \Cref{lem:hulls} Item v)  implies that $\gamma$ and $p$ lie in distinct non-adjacent closed orthants of $a$. By \Cref{lem:geodesic_concat}, we can extend $\gamma$ to a geodesic from $c$ to $p$. We can extend $\gamma$ similarly from $c$ to $r$. This gives a geodesic between $p$ and $r$ containing $x$. Thus $x \in H(r,p)$.

    Now suppose that one of $H(b,a)$ and $H(b,c)$ contains the other. Say that $H(b,c)\subset H(b,a)$. If $a$ and $H(c,r)$ lie in non-adjacent closed orthants of $c$, then we can choose the same median $x$ as in the non-containment case and repeat the concatenation argument (\Cref{fig:bad_meds}, left). Otherwise, $H(c,r)$ and $a$ lie in either the same closed orthant or adjacent closed orthants of $c$. The same-orthant case is precluded by \Cref{lem:hulls} Item v) since $c$ is a cinch cusp.
    
    So, suppose that $H(c,r)$ and $a$ are in adjacent orthants. See \Cref{fig:bad_meds}, right. Let $c'$ be the next cinch cusp along $H(c,r)$. The hull $H(c,c')$ is a single polygon by definition. Moreover, $H(c,a)$ is a single polygon by \Cref{lem:hulls} Item v). Thus, $c,\,a,\,c'$ have the same configuration as $b,\,a,\,c$ in the non-containment case. Specifically, it is the setting of the second panel of \Cref{fig:good_meds}. So we can apply \Cref{lem:local_median} to obtain a local median $x'$. The same argument as in the non-containment case gives $x'\in H(p,q)\cap H(q,r)\cap H(r,p)$. \qedhere
    %
%
    \begin{figure}[h]
        \centering
        \includegraphics[width=\linewidth]{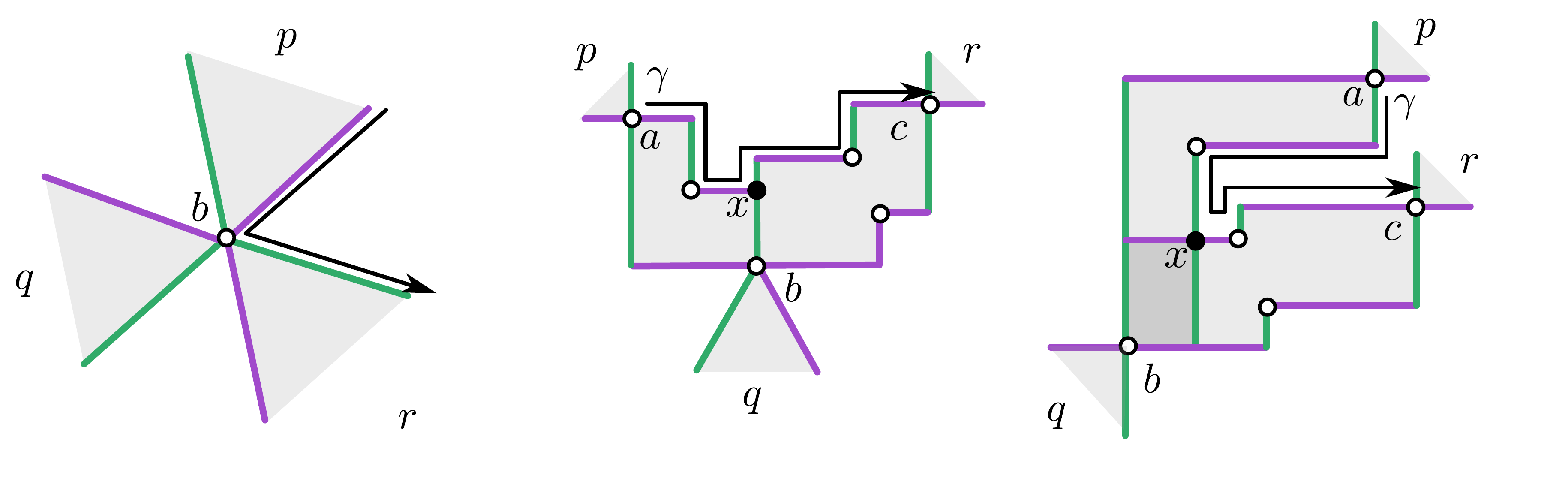}
        \caption[Global medians.]{The arrangement of the cusps when finding medians. The local configurations correspond to cases A, B and C of \Cref{lem:local_median} from left to right.}
        \label{fig:good_meds}
    \end{figure}
\end{proof}

\begin{figure}[h]
\centering
\includegraphics[width=0.6\linewidth]{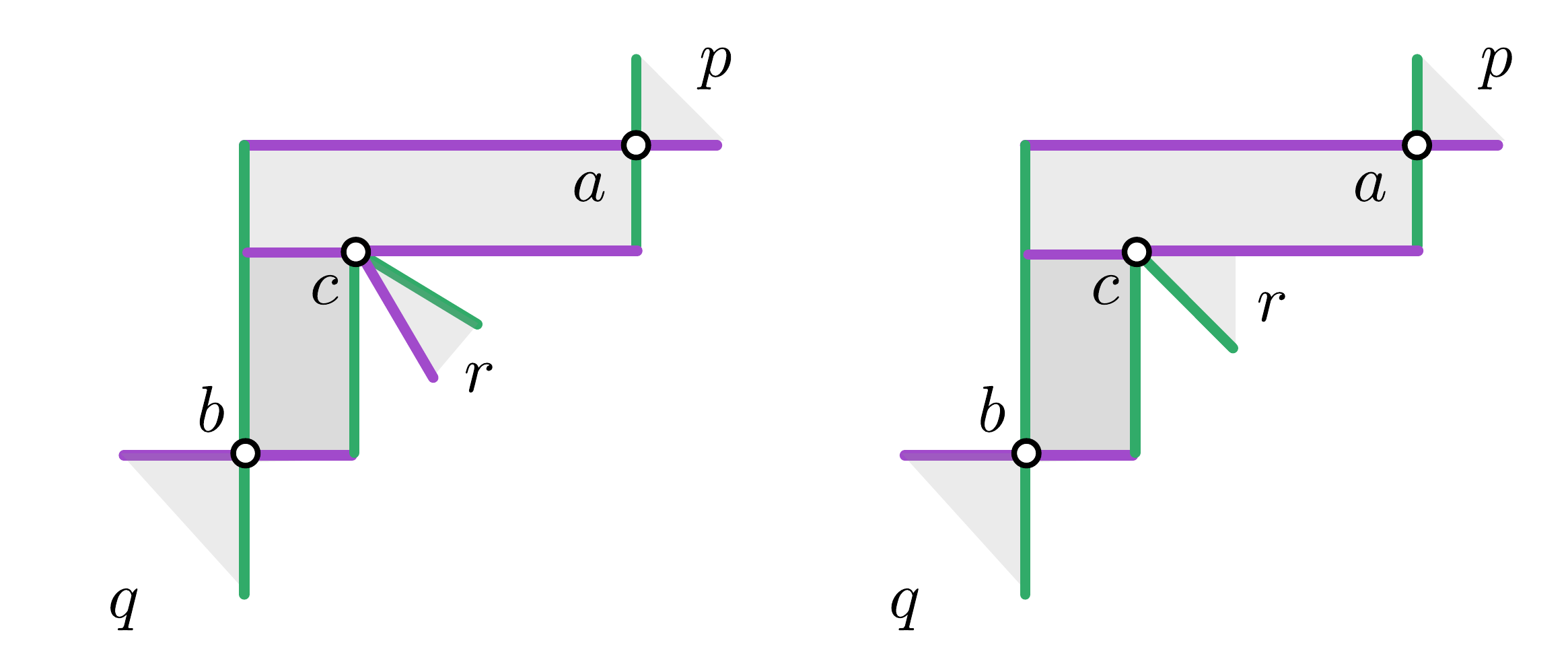}
\caption[Medians in the containment case.]{The two positions of $H(p,r)$ when $H(b,c)\subset H(b,a)$.}
\label{fig:bad_meds}
\end{figure}
    
\begin{rem}
    One can prove that the medians we find are unique. In this sense, loom spaces have a tree-like structure. Proving uniqueness requires an additional argument at each step. Since we do not require it for our main results, we omit the proof. 
\end{rem}

\section{Total cusp orders}\label{sec:totalorder}
Let $\cal{V}$ be a veering triangulation of $M$. Let $\cal{L}$ be the loom space corresponding to $\cal{V}$. Then $\cusps(\cal{L})$ comes equipped with natural partial orders. Namely, suppose that $p,q\in\cusps(\cal{L})$ are opposite corners of an edge rectangle. Then one of $p$ and $q$ is east-most with respect to the orientation on $\cal{L}^u$. The transitive closure of this relation is a partial order on $\cusps(\cal{L})$ \cite[Lemma 10.30]{ss-link}. 

In this section, we upgrade this partial order to a total order, proving \Cref{thm:totalorder}. We then use this to deduce the main criterion, \Cref{thm:smoothcriterion}. 

\subsection{Geodesic turning numbers}
 Suppose for this subsection that $\pi_1(M)$ preserves the orientations on the stable and unstable foliations of $\cal{L}$. We do not relax this assumption until \Cref{subsec:vts_give_branchings}.  

Let $p,q\in \cal{L}\cup\cusps(\cal{L})$. Let $\gamma$ be a geodesic between $p$ and $q$, and suppose that $\gamma$ has at least one unstable edge. Let $(c_i)$ and $(P_i)$ be the cinch cusps and polygons of the convex hull $H(p,q)$ given by \Cref{lem:hulls}. Suppose that $\gamma$ traverses $P_i$ and $P_{i+1}$ in opposite unstable directions. We call the cusp $c_{i+1}$ that lies between $P_i$ and $P_{i+1}$ a \emph{turning cusp} for $\gamma$.

Let $(t_j)$ be the tuple of turning cusps of $\gamma$ for $j=1,\dots,k$. Let $(\gamma_i)$ be the subgeodesics obtained by subdividing $\gamma$ at the turning cusps. Each unstable edge of $\gamma$ has an associated sign $-1$ or $+1$ according to whether it points westward or eastward, respectively. Each $\gamma_i$ has a sign $\sigma(\gamma_i)\in \{-1,1\}$ by taking the sign of any of its unstable edges.

Using the orientation on $\cal{L}$, the orthants at a cusp are ordered in the counterclockwise or clockwise direction. By \Cref{lem:hulls} Item v), $\gamma$ continues into a non-adjacent orthant at a turning cusp $t_j$. This outgoing orthant is either counterclockwise or clockwise from the incoming orthant at $t_j$. Define $\hat{\omega}_{t_j}(\gamma)$ as $+1$ or $-1$, respectively. Call $\hat{\omega}_{t_j}$ the \emph{untwisted winding sign at $t_j$}. We do not count how many orthants $\gamma$ winds around these cusps. 

Define
\[
\omega_{t_j}(\gamma) = \sigma(\gamma_{j-1})\hat{\omega}_{t_j}(\gamma).
\]
 Call ${\omega}_{t_j}$ the \emph{twisted winding sign at $t_j$}. The preference for $\omega$ over $\hat{\omega}$ will become evident: see \Cref{rem:reason_for_twisted}.

\begin{defn}\label{def:turning_number}
   Define the \emph{twisted turning number} of $\gamma$ as:
\begin{equation}
\gw(\gamma) := \sum_{j=1}^{k} \omega_{t_j}(\gamma) +\sum_{i=0}^{k}\sigma(\gamma_i). 
\label{eqn:turning_number}
\end{equation}

For $p,q\in\cusps(\cal{L})$ distinct cusps, set $\gw(p,q): = \gw(\gamma)$ for any geodesic $\gamma$ between $p$ and $q$.
\end{defn}

See \Cref{fig:turning_number}.
\begin{figure}[h]
    \centering
    \includegraphics[width=0.5\linewidth]{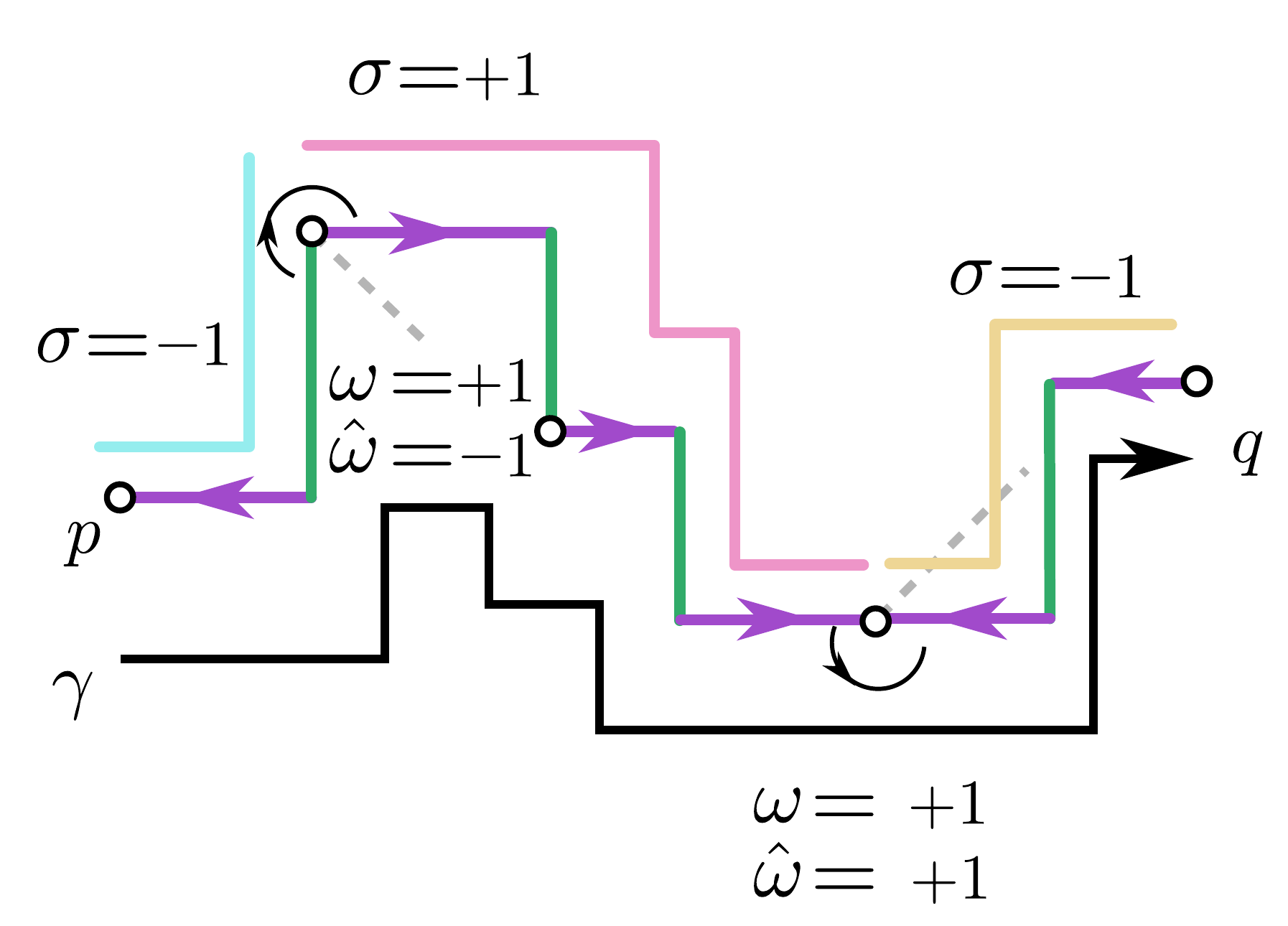}
    \caption[A sample computation of $\gw$.]{Computing the turning number of a geodesic $\gamma$. Here, $\Theta(\gamma)=+1$. Neither winding at a cusp (hollow dot) without changing direction nor turning at a material point contributes to the sum.}
    \label{fig:turning_number}
\end{figure}

\begin{lemma}
    The turning number $\gw(p,q)$ is well-defined.
\end{lemma}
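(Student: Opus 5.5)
The plan is to show that every quantity entering \eqref{eqn:turning_number} depends only on the hull $H(p,q)$ and its decomposition, not on the chosen geodesic $\gamma$. Fix distinct cusps $p,q$ and let $(c_i)$, $(P_i)$ be the cinch cusps and polygons of $H(p,q)$ from \Cref{lem:hulls}. By \Cref{lem:geodesics} i), every geodesic $\gamma$ from $p$ to $q$ lies in $H(p,q)$. Since the $P_i$ meet only at the cinch cusps (\Cref{lem:hulls} ii)), every such $\gamma$ passes through every $c_i$ in order. So $\gamma$ is a concatenation of geodesics $\gamma^{(i)}$ from $c_i$ to $c_{i+1}$, each contained in $P_i = H(c_i,c_{i+1})$.

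First I would show that the unstable direction of $\gamma$ on each $P_i$ is intrinsic. By \Cref{lem:hulls} iv), all edges of any geodesic from $c_i$ to $c_{i+1}$ point in a common direction along each foliation. To compare two different geodesics in $P_i$, I would use that both run from $c_i$ to $c_{i+1}$. Their unstable edges must then point toward the side of $c_{i+1}$ relative to $c_i$; concretely, $c_{i+1}$ lies in a definite closed orthant of $c_i$ (\Cref{lem:omni_orthants} i)), and that orthant's cardinal directions fix the sign. The only degenerate case is a polygon crossed with no unstable edge, namely a leaf segment. This forces every geodesic in $P_i$ to be that segment, so the omission is the same for every $\gamma$. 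This step settles three things: which $P_i$ carry a sign, which cinch cusps are turning cusps, and hence the subgeodesics $\gamma_j$ together with their signs $\sigma(\gamma_j)$.

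One subtlety is a polygon with no unstable edge sitting between two that have them. I would adopt the convention that the turning comparison is made between consecutive polygons that carry unstable edges. This is again intrinsic to the hull, so turning cusps remain well-defined. I expect this bookkeeping to be the main fiddly point: checking that the definition of turning cusp and of the segments $\gamma_j$ is unambiguous when leaf-segment pieces occur.

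Next, the untwisted winding sign $\hat\omega_{t_j}$. The incoming and outgoing orthants at $t_j = c_{i+1}$ are the closed orthants of $c_{i+1}$ containing $P_i$ and $P_{i+1}$. These depend only on the hull, and they are non-adjacent by \Cref{lem:hulls} v). The orthants of a cusp are indexed by $\bbZ$, so whether the outgoing index is greater or smaller than the incoming one is determined by the hull alone. Hence $\hat\omega_{t_j}$ and $\omega_{t_j} = \sigma(\gamma_{j-1})\hat\omega_{t_j}$ are independent of $\gamma$, and so is $\gw(\gamma)$.

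Finally, I would check that at least one unstable edge always exists, so that the definition applies. If a geodesic between cusps had no unstable edge, it would be a single stable leaf containing both cusps. This is impossible by \cite[Lemma 3.10]{loom_new_arxiv}. The argument also does not depend on the orientation-preservation assumption except through using fixed orientations, so $\gw(p,q)$ is well-defined.
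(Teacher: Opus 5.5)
Your proposal is correct and follows essentially the same route as the paper. The hull decomposition of $H(p,q)$ fixes the turning cusps and the signs $\sigma$ through item iv) of the hull lemma, and each winding sign is read off from the closed orthants of the cinch cusp that contain $P_i$ and $P_{i+1}$. Your extra convention for polygons that are leaf segments is never needed when $p,q$ are cusps: your own final argument (no leaf contains two cusps), applied to each consecutive pair $c_i,c_{i+1}$, shows that every $P_i$ carries an unstable edge.
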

\begin{proof}
    We must show that $\gw(p,q)$ does not depend on the choice of geodesic. Let $\gamma,\, \beta$ be two geodesics from $p$ to $q$. The decomposition into subgeodesics is determined entirely by the structure of $H(p,q)$. By \Cref{lem:hulls} Item iv), any geodesic from $p$ to $q$ crosses each polygon $P_i$ with unstable edges pointing the same direction. So, $\gamma$ and $\beta$ have the same turning cusps. The winding directions of $\gamma$ and $\beta$ agree since they are determined by the orthants of $c_{i+1}$ that contain $P_i$ and $P_{i+1}$.  So $\gw(\gamma)=\gw(\beta)$. 
\end{proof}

\begin{rem}
Note that the terms of the $\sigma$-sum in \Cref{def:turning_number} alternate, so the sum takes values in $\{-1\,,0,\,1\}$. Thus, this term determines the sign of $\gw(\gamma)$ precisely when the winding term is zero.
\end{rem}

\begin{lemma}\label{lem:order_basics}
    Suppose that the $\pi_1(M)$-action preserves the orientations on the foliations of $\cal{L}$. For $p,q\in\cusps(\cal{L})$ distinct, we have:
    \begin{itemize}
        \item $\gw(p,q) \neq 0$,
        \item $\gw(p,q) = -\gw(q,p)$,
        \item $\gw(g\cdot p,g\cdot q) = \gw(p,q)$ for all $g\in \pi_1(M)$,
        \item if $p$ and $q$ lie on the west and east sides of a common edge rectangle in $\cal{L}$, then $\gw(p,q)=1$. 
    \end{itemize}
\end{lemma}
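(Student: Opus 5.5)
We verify the four items in turn, working directly from \Cref{def:turning_number} and the well-definedness just established. Throughout, fix a geodesic $\gamma$ from $p$ to $q$ with subgeodesics $\gamma_0,\dots,\gamma_k$ split at the turning cusps $t_1,\dots,t_k$.

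\textbf{Nonvanishing.} We first check that $\gamma$ has at least one unstable edge, so that $\gw$ is defined. Since $p\neq q$ are cusps and a leaf contains at most one cusp, $\gamma$ is not contained in a single stable leaf. Hence $\gamma$ has an unstable edge, and every $P_i$ is crossed with an unstable direction.

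Next we compare parities. Consecutive $\sigma(\gamma_i)$ alternate by the definition of a turning cusp. So the $\sigma$-sum is $0$ when $k$ is odd and $\pm1$ when $k$ is even. The winding sum is a sum of $k$ terms, each equal to $\pm1$, so it has the parity of $k$. If $k$ is odd, the total is an odd integer plus $0$. If $k$ is even, it is an even integer plus $\pm1$. In either case $\gw(\gamma)$ is odd, and hence nonzero.

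\textbf{Antisymmetry.} Let $\bar\gamma$ be the reverse geodesic from $q$ to $p$. It has the same turning cusps in reverse order, and every unstable edge has its direction flipped. So each $\sigma$ term changes sign, and the $\sigma$-sum negates. At a turning cusp $t_j$, reversal swaps the incoming and outgoing orthants, which reverses the rotation direction, so $\hat\omega_{t_j}$ negates.

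The twisted sign $\omega_{t_j}(\bar\gamma)$ uses the sign of the segment entering $t_j$ along $\bar\gamma$. That segment is $\bar\gamma_j$, with sign $-\sigma(\gamma_j)=\sigma(\gamma_{j-1})$, because $t_j$ is a turning cusp. Therefore
\[
\omega_{t_j}(\bar\gamma)=\sigma(\gamma_{j-1})\cdot(-\hat\omega_{t_j}(\gamma))=-\omega_{t_j}(\gamma).
\]
This bookkeeping is the only delicate point, and it is exactly why the twisted sign is used. It gives $\gw(q,p)=-\gw(p,q)$.

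\textbf{Invariance.} Fix $g\in\pi_1(M)$. It maps geodesics to geodesics and hulls to hulls, so it carries turning cusps to turning cusps. The action preserves the foliation orientations by assumption, so every $\sigma$ term is unchanged. It also preserves the orientation of $\cal{L}$, since $M$ is oriented and the two foliation orientations together determine the plane orientation. So $g$ preserves the cyclic order of orthants at cusps, and $\hat\omega$ is unchanged. Hence $\gw(g\cdot p,g\cdot q)=\gw(p,q)$.

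\textbf{Edge rectangles.} Suppose $p$ and $q$ are the west and east corners of an edge rectangle. We take $\gamma$ to be a boundary path of the rectangle: one unstable side and one stable side, meeting at a material corner. Then $\gamma$ has no turning cusps, and its single unstable edge points east. So $\gw=\sigma(\gamma_0)=+1$.
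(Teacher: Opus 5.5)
Your proof is correct and follows the paper's argument item by item. Nonvanishing is by parity: the paper simply notes that $\Theta$ is a sum of $2k+1$ terms equal to $\pm1$, which is what your alternation argument amounts to. Antisymmetry comes from reversal negating the $\sigma$-terms and $\hat\omega$, while the sign entering each turning cusp is unchanged. Invariance follows from invariance of $\sigma$ and $\hat\omega$, and the edge-rectangle case uses a geodesic in the rectangle with no turning cusps. Your preliminary check that a geodesic between distinct cusps has an unstable edge is a small addition that the paper leaves implicit.
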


\begin{proof}
    For a geodesic $\gamma$, if it meets $k$ turning cusps, it decomposes into $k+1$ subgeodesics. This implies that $\gw(p,q)$ is a sum of an odd number of $\pm1$. Hence $\gw(p,q) \neq 0$.
    
    Swapping $p$ and $q$ reverses the indexing on the hulls. So, taking the reverse of the geodesic, the subgeodesic chains and their turning cusps are the same with reversed indexing. Thus the $\sigma$ terms are negated. The winding direction $\hat{\omega}$ at each turning cusp is reversed. The sign of a subgeodesic as it enters a given turning cusp is unchanged: we approach the opposite side of the cinch cusp in the opposite direction. So, the $\omega$-terms are also negated. This gives $\gw(p,q) = -\gw(q,p)$.
    
    Invariance follows immediately from the invariance of $\hat{\omega}$ and $\sigma$.
    
    For the final item, we take a single geodesic from $p$ to $q$ inside the edge rectangle. Its subgeodesic decomposition is then the geodesic itself pointing in the unstable direction, giving $\gw(p,q)=1$. 
\end{proof}



\subsection{Turning numbers to a total order}
To use $\Theta$ to define a total order, the remaining work after \Cref{lem:order_basics} is to prove transitivity. For this, we use the median given by \Cref{lem:medians}. 

Let $\alpha$ be a geodesic that contains at least one unstable edge and terminates at $x$. Let $\sigma_x(\alpha)$ denote the incoming unstable sign of $\alpha$ at $x$.
\begin{lemma}\label{lem:prewind_identity}
    Let $\alpha$ and $\beta$ be geodesics with at least one unstable edge. Suppose that they both terminate at $x\in \cal{L}\cup \cusps(\cal{L})$. Suppose that $\alpha\beta^{-1}$ is a geodesic. Then
    \[
\gw(\alpha\beta^{-1}) =
\begin{cases}
\gw(\alpha)-\gw(\beta)+\omega_x(\alpha\beta^{-1}), & \sigma_x(\alpha)=\sigma_x(\beta), \\[4pt]
\gw(\alpha)-\gw(\beta)-\sigma_x(\alpha), & \sigma_x(\alpha)=-\sigma_x(\beta).
\end{cases}
\]
\end{lemma}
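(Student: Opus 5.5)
We prove the identity by comparing the turning-cusp decompositions of $\alpha$, $\beta^{-1}$ and $\alpha\beta^{-1}$ term by term. Write $\gamma=\alpha\beta^{-1}$. Since $\gamma$ is a geodesic, its hull decomposes as in \Cref{lem:hulls}. By \Cref{lem:follow_geodesics} and the uniqueness of cinch cusps, the polygon chain of $H(\text{start},\text{end})$ is the chain for $\alpha$ followed by the chain for $\beta^{-1}$. If $x$ is a cusp, $x$ is an additional cinch cusp between them. If $x$ is material, the last polygon of $\alpha$ and the first polygon of $\beta^{-1}$ may merge into a single polygon. In that case $x$ lies in its interior or on its boundary, and the unstable directions on either side agree by \Cref{lem:hulls} Item iv).

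Consequently, the turning cusps of $\gamma$ are the turning cusps of $\alpha$, then possibly $x$, then the turning cusps of $\beta^{-1}$. Write $\alpha=\alpha_0\cdots\alpha_a$ and $\beta^{-1}=\beta'_0\cdots\beta'_b$ for the subgeodesic decompositions. By the second item of \Cref{lem:order_basics}, applied at the level of geodesics rather than cusps, we have $\gw(\beta^{-1})=-\gw(\beta)$. The same proof goes through verbatim for arbitrary endpoints. Note also that the incoming sign of $\beta^{-1}$ leaving $x$ is $-\sigma_x(\beta)$. The twisted winding signs at interior turning cusps depend only on local orthant data and on the sign of the preceding subgeodesic. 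These are unchanged when we pass to $\gamma$, because the preceding subgeodesic inside $\beta^{-1}$ has the same sign. The only exception is the first turning cusp of $\beta^{-1}$, if $\beta'_0$ gets merged. Even then, its preceding piece carries the same sign as $\beta'_0$, so it is unchanged as well.

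\textbf{Case $\sigma_x(\alpha)=\sigma_x(\beta)$.} The last unstable sign of $\alpha$ is opposite to the first unstable sign of $\beta^{-1}$. So $x$ is a turning cusp of $\gamma$; in particular $x$ must be a cusp, since a material merge would force a common direction. The $\sigma$-sum of $\gamma$ is then the concatenation of the two $\sigma$-lists, and the $\omega$-sum gains exactly the term $\omega_x(\gamma)$. Summing gives $\gw(\gamma)=\gw(\alpha)+\gw(\beta^{-1})+\omega_x(\gamma)=\gw(\alpha)-\gw(\beta)+\omega_x(\alpha\beta^{-1})$.

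\textbf{Case $\sigma_x(\alpha)=-\sigma_x(\beta)$.} Here $\alpha_a$ and $\beta'_0$ point in the same unstable direction, so $x$ is not a turning cusp. The two pieces fuse into one subgeodesic of $\gamma$, whose sign is $\sigma_x(\alpha)$. Compared with the concatenated lists, the $\sigma$-sum of $\gamma$ therefore loses one copy of $\sigma_x(\alpha)$ while the $\omega$-terms are unchanged. This gives $\gw(\gamma)=\gw(\alpha)-\gw(\beta)-\sigma_x(\alpha)$.

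\textbf{Main obstacle.} The delicate step is justifying that the turning cusps of $\gamma$ are exactly the union of those of $\alpha$ and $\beta^{-1}$, together with $x$ in the first case. Two facts are needed:
\begin{enumerate}
\item the cinch-cusp chains concatenate;
\item when $x$ is material, no new cinch cusp appears and no polygons merge across directions.
\end{enumerate}
I would first argue both using \Cref{lem:follow_geodesics} together with \Cref{lem:hulls} Items iv) and v). A secondary point is that $x$ could lie on a stable edge, in which case $\sigma_x$ refers to the last unstable edge. The same bookkeeping should still apply.
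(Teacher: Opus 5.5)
Your proposal is correct and is the same bookkeeping argument as the paper's: you concatenate the subgeodesic decompositions of $\alpha$ and $\beta^{-1}$ and use $\gw(\beta^{-1})=-\gw(\beta)$. You then either gain the winding term at the junction (first case) or remove the doubly counted $\sigma$-term of the fused subgeodesic (second case). Your checks that the twisted signs at the other turning cusps are unchanged usefully expand what the paper leaves implicit.

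One small correction, which the paper's proof shares: in the first case $x$ need not be a cusp. If $x$ is material and lies on the stable stretch between the last unstable edge of $\alpha$ and the first unstable edge of $\beta^{-1}$ (the situation you flag at the end), the unstable direction reverses at the unique cusp $c$ on that stretch. So $\omega_x$ must be read as $\omega_c$; this is harmless, since the paper only invokes the lemma at cusps.
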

\begin{proof}
    If $\sigma_x(\alpha) = \sigma_x(\beta)$, then $x$ is a turning cusp of $\alpha\beta^{-1}$. The geodesic subdivision of $\alpha \beta^{-1}$ is then just the union of their subdivisions, and we obtain the additional turning term at $x$.

    If instead $\sigma_x(\alpha) = -\sigma_x(\beta)$, then $\alpha\beta^{-1}$ does not turn at $x$. Then the start of $\beta^{-1}$ and the end of $\alpha$ lie in the same subgeodesic in the decomposition of $\alpha\beta^{-1}$. Thus this $\sigma$-term for $\Theta(\alpha\beta^{-1})$ is counted once in each of $\Theta(\alpha)$ and $\Theta(\beta^{-1})$. Correcting this gives the second case. 
\end{proof}

Let $p,q,r\in\Delta(\cal{L})$. Loosely, \Cref{lem:medians} and \Cref{lem:prewind_identity} together say that the turning number $\Theta(p,r)$ differs from $\Theta(p,q)$+$\Theta(q,r)$ by a sum of local defects in \Cref{lem:prewind_identity}. We frame this as follows.
\begin{lemma}\label{lem:cycle_identity}
    Let $p,\,q,\,r\in \cusps(\cal{L})$ be distinct. Then:
    \begin{equation}
    \vert \gw(p,q)+\gw(q,r)+\gw(r,p)\vert = 1 \label{eq:cycle_identity}
    \end{equation}
\end{lemma}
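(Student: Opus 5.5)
The plan is to use the median from \Cref{lem:medians} to split each of the three turning numbers into contributions from three geodesic pieces meeting at a common point. The cancellations should then leave only a local defect at the median, which we compute directly.

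Let $x\in H(p,q)\cap H(q,r)\cap H(r,p)$ be a median. Choose geodesics $\alpha_p,\alpha_q,\alpha_r$ from $p$, $q$, $r$ to $x$. By \Cref{lem:follow_geodesics}, each concatenation $\alpha_p\alpha_q^{-1}$, $\alpha_q\alpha_r^{-1}$, $\alpha_r\alpha_p^{-1}$ is a geodesic between the corresponding pair of cusps. These compute $\gw(p,q)$, $\gw(q,r)$ and $\gw(r,p)$.

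We then apply \Cref{lem:prewind_identity} to each concatenation. Summing the three identities, the terms $\gw(\alpha_p),\gw(\alpha_q),\gw(\alpha_r)$ each appear once with a plus sign and once with a minus sign, so they cancel. The cyclic sum therefore equals the sum of three local defect terms at $x$, each of the form $\omega_x(\alpha_a\alpha_b^{-1})$ or $-\sigma_x(\alpha_a)$, according to whether the incoming unstable signs agree. Two technical points arise here.
\begin{itemize}
\item Some $\alpha$ may contain no unstable edge. We handle this by perturbing $x$ slightly within the triple intersection, or by using the convention that a purely stable segment inherits the sign of the adjacent unstable edge.
\item If $x$ equals one of $p,q,r$, we are in the setting of \Cref{claim:easy_median}. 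Then one $\alpha$ is trivial, and the identity reduces to the two-term case $\gw(p,r)=\gw(p,q)+\gw(q,r)+\text{defect}$, which we treat separately in the same way.
\end{itemize}

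The main step is a finite case analysis of the local configuration at $x$. The three incoming signs $\sigma_x(\alpha_p),\sigma_x(\alpha_q),\sigma_x(\alpha_r)\in\{\pm1\}$ cannot all be distinct. Up to symmetry, either all three agree, or exactly two agree.

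\textbf{All three agree.} Then $x$ is a turning cusp for every pair; in the material case a turning point cannot occur, so $x$ must be a cusp. The three winding signs $\hat\omega_x$ come from three geodesics entering $x$ through three orthants that are pairwise non-adjacent, by \Cref{lem:hulls} v). Going around the cyclic order of orthants at $x$, which is indexed by $\bbZ$, two of the three pairwise windings are in one rotational direction and one in the other. Since the twisted signs all carry the common factor $\sigma$, the defects sum to $\pm1$.

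\textbf{Exactly two agree.} Say $\sigma_x(\alpha_p)=\sigma_x(\alpha_q)=s$ and $\sigma_x(\alpha_r)=-s$. Then two defects are $-\sigma_x$ terms. One is $-s$, and the other is $-(-s)=s$ or $-s$ depending on the orientation of the concatenation. The remaining defect is a winding term $\pm1$. We must check, using the orthant positions, that the total is $\pm1$ rather than $\pm3$.

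I expect this sign bookkeeping to be the main obstacle. Specifically, one must show that the winding direction at $x$ is correlated with which geodesic arrives with the opposite unstable sign. This is where the twisted sign $\omega=\sigma\hat\omega$, rather than $\hat\omega$, should make the local defects sum to exactly $\pm1$ in every configuration.
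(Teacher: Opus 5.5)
Your plan follows the paper's for $x\in\{p,q,r\}$ and for $x$ a cusp (\Cref{claim:caseA} and \Cref{claim:caseB}). This includes using the linear order on the orthants of $x$ when all three signs agree. That order is linear, not cyclic; a cyclic order would allow $\pm3$.

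The step you call the main obstacle is actually automatic. Suppose $\sigma_x(\alpha_p)=\sigma_x(\alpha_q)=s=-\sigma_x(\alpha_r)$. The cyclic orientation forces the two mixed concatenations to be $\alpha_q\alpha_r^{-1}$ and $\alpha_r\alpha_p^{-1}$. \Cref{lem:prewind_identity} gives them defects $-\sigma_x(\alpha_q)=-s$ and $-\sigma_x(\alpha_r)=+s$, which cancel identically. So the cyclic sum is the single winding sign of $\alpha_p\alpha_q^{-1}$. No correlation with orthant positions is needed, and $\pm3$ cannot occur.

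The genuine gap is the material median, which your case analysis never handles. A geodesic cannot change unstable direction at a material point. So when $x$ is material, the first case of \Cref{lem:prewind_identity} (``$x$ is a turning cusp'') is unavailable, and your `local defects at $x$' are undefined.

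Agreeing incoming signs do occur at a material $x$. In that case one of the two geodesics reaches $x$ along a final purely stable stretch from a cusp $a$, and the concatenation turns at $a$. The winding sign therefore lives at $a$, not at $x$.

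Your exclusion of the all-agree configuration at material $x$ relies on the same invalid implication. Ruling it out needs real local geometry: the three geodesics arrive along distinct half-leaves of $x$, U-turns are forbidden, and a leaf carries at most one cusp. This is the content of \Cref{claim:turns}.

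Moreover, the degenerate case $a=p$ genuinely arises here: $\alpha_p$ is then a single stable segment with no unstable edge. Neither of your fixes works. The paper remarks that medians are unique, so there is nothing to perturb. And `inherit the sign of the adjacent unstable edge' assigns that segment opposite signs in the two concatenations containing it.

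The paper sidesteps all of this by cutting at the last cusps $a,b,c$ before $x$ and tabulating defects there (\Cref{claim:caseC}). Your scheme can be repaired more cheaply. Place the winding term at the cusp where the concatenation actually turns, set $\gw$ of a purely stable segment to $0$, and use the geometric exclusion above. The cancellation then leaves a single $\pm1$. But these ingredients are precisely what the proposal lacks.
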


We break the proof of the identity into three cases which we prove as separate claims. For the remainder of the subsection, fix a median $x$ between $p,\,q,\,r$ given by \Cref{lem:medians}. The three cases correspond to whether $x$ lies in $\{p,q,r\}$, is a distinct cusp, or is material. In every case, we compute the winding sign using \Cref{lem:prewind_identity}. 

\begin{claim}\label{claim:caseA}
    The cycle identity \eqref{eq:cycle_identity} holds when $x\in \{p,q,r\}$.
\end{claim}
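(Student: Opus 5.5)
By the symmetry of the statement we assume $x=q$. Relabelling is harmless here: the quantity $\gw(p,q)+\gw(q,r)+\gw(r,p)$ is invariant under cyclic permutations of $(p,q,r)$. A transposition, by the antisymmetry in \Cref{lem:order_basics}, turns it into $-(\gw(p,q)+\gw(q,r)+\gw(r,p))$, which has the same absolute value. Since $q$ is a median, $q\in H(p,r)$, so $q$ is between $p$ and $r$. By \Cref{lem:omni_orthants} ii) there is a geodesic from $p$ to $r$ through $q$. Let $\alpha$ be its initial segment from $p$ to $q$, and let $\beta^{-1}$ be its final segment from $q$ to $r$, so that $\beta$ is a geodesic from $r$ to $q$.

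The concatenation $\alpha\beta^{-1}$ is a geodesic from $p$ to $r$. This also follows from \Cref{lem:follow_geodesics} applied with the point $q\in H(p,r)$, and it shows that every choice of $\alpha$ and $\beta$ works. Hence, by \Cref{lem:order_basics},
\[
\gw(r,p) = -\gw(\alpha\beta^{-1}), \qquad \gw(p,q)=\gw(\alpha), \qquad \gw(q,r) = -\gw(\beta).
\]
Applying \Cref{lem:prewind_identity}, the sum $\gw(p,q)+\gw(q,r)+\gw(r,p) = \gw(\alpha)-\gw(\beta)-\gw(\alpha\beta^{-1})$ is equal to $-\omega_q(\alpha\beta^{-1})$ when $\sigma_q(\alpha)=\sigma_q(\beta)$, and to $\sigma_q(\alpha)$ when $\sigma_q(\alpha)=-\sigma_q(\beta)$. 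Both are signs in $\{\pm1\}$, so the absolute value is $1$.

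The remaining point is the hypothesis of \Cref{lem:prewind_identity} that $\alpha$ and $\beta$ each contain an unstable edge. Suppose instead that $\alpha$, say, consists only of stable edges. A geodesic from $p$ to $q$ meets each stable leaf in a connected set, so it then lies in a single stable leaf. That leaf would contain the two cusps $p$ and $q$ at its ends, which is impossible because a leaf has a cusp at only one end \cite[Lemma 3.10]{loom_new_arxiv}. Also $\alpha$ is nonempty because $p\neq q$.

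The main obstacle is checking that \Cref{lem:prewind_identity} really applies at a cusp $x=q$. We need that $q$ is a cinch cusp of $H(p,r)$, so that in the first case it is genuinely a turning cusp with a well-defined sign $\omega_q$. This holds because the hulls $H(p,q)$ and $H(q,r)$ lie in non-adjacent closed orthants of $q$, which is the content of \Cref{claim:easy_median}. Consequently $H(p,r)$ is the chain $H(p,q)$ followed by $H(q,r)$, and its turning-cusp decomposition is the union of the two decompositions together with $q$ itself.
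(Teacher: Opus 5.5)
Your proposal is correct and follows the paper's own argument: reduce to $x=q$, concatenate geodesics through $q$ (via \Cref{lem:follow_geodesics}), and read off a defect in $\{\pm1\}$ from \Cref{lem:prewind_identity}. One small repair is needed in your side check that $\alpha$ has an unstable edge. Connectedness of leaf intersections alone does not confine an all-stable path to a single leaf, since such a path could switch stable leaves at a cusp; the correct reason is that consecutive stable edges cannot meet at a material vertex, so the first edge would be a stable arc joining two cusps, which \cite[Lemma 3.10]{loom_new_arxiv} forbids.
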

\begin{proof}
    By symmetry, we may take $x = q$. Let $\alpha$ be a geodesic from $p$ to $q$, and $\beta$ from $q$ to $r$. \Cref{lem:follow_geodesics} implies that concatenating $\alpha$ and $\beta$ gives a geodesic from $p$ to $r$. Let $\gamma$ denote the reverse of their concatenation. This is a geodesic from $r$ to $p$. Then
     \[
     \Theta(\gamma) = -\Theta(\alpha)-\Theta(\beta)+ K
     \]
     where $K$ is either $\omega_q(\gamma)$ or $\sigma_q(\alpha)$. These lie in $\{-1,1\}$, so in both cases \Cref{eq:cycle_identity} holds. \qedhere

         
    \begin{figure}[h]
        \centering
        \includegraphics[width=0.6\linewidth]{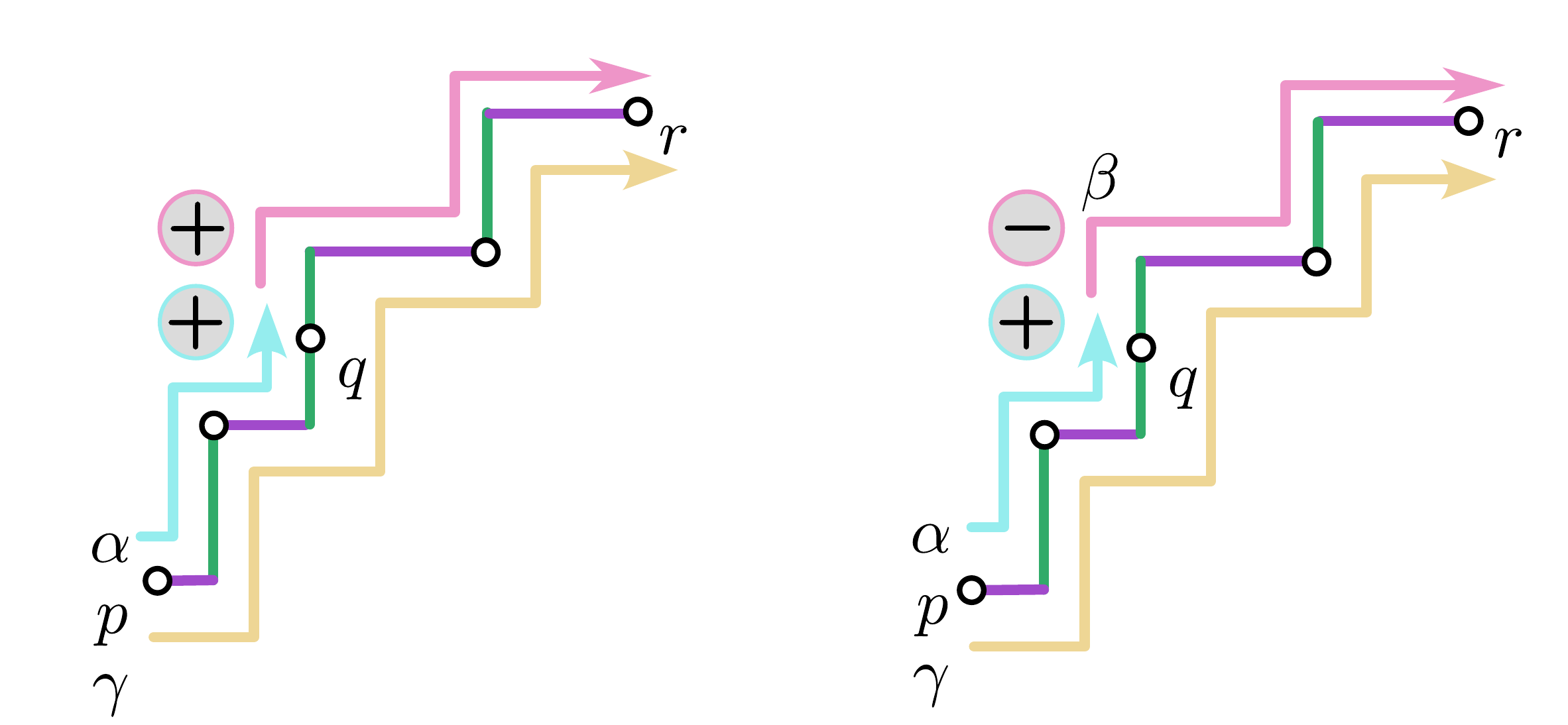}
        \caption{The geodesics for \Cref{claim:caseA}.}
        \label{fig:caseA}
    \end{figure}
\end{proof}

For the next two cases, let $x\notin\{p,q,r\}$.  Let $\alpha_x$, $\beta_x$ and $\gamma_x$ be geodesic paths from $p, q, r$ to $x$, respectively. Let $\alpha$, $\beta$ and $\gamma$ be geodesics from $p$ to $q$, $q$ to $r$ and $r$ to $p$ obtained by concatenating each of the three geodesic paths with the appropriate reverse of another. These are geodesics by \Cref{lem:follow_geodesics}.

\begin{claim}\label{claim:caseB}
    The cycle identity \eqref{eq:cycle_identity}  holds when $x$ is a cusp and $x\notin \{p,q,r\}$.
\end{claim}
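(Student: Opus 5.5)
We plan to expand each of $\Theta(\alpha)$, $\Theta(\beta)$, $\Theta(\gamma)$ with \Cref{lem:prewind_identity}, where $\alpha=\alpha_x\beta_x^{-1}$, $\beta=\beta_x\gamma_x^{-1}$ and $\gamma=\gamma_x\alpha_x^{-1}$. First we check that each of $\alpha_x,\beta_x,\gamma_x$ has at least one unstable edge. This should hold because the paths end at a cusp $x$ that is not an endpoint. If one of them ran entirely along a single stable leaf, that leaf would contain two cusps, which is impossible.

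Summing the three identities, the terms $\Theta(\alpha_x),\Theta(\beta_x),\Theta(\gamma_x)$ cancel in pairs. We are left with
\[
\Theta(p,q)+\Theta(q,r)+\Theta(r,p)=K_{pq}+K_{qr}+K_{rp},
\]
where each local defect $K$ is either a twisted winding sign $\omega_x(\cdot)$ or $-\sigma_x(\cdot)$. Which one occurs depends on whether the two incoming unstable signs at $x$ agree.

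The main step is the combinatorics at $x$. Write $s_p=\sigma_x(\alpha_x)$, $s_q=\sigma_x(\beta_x)$ and $s_r=\sigma_x(\gamma_x)$. Since these take only two values, two of them agree, say $s_p=s_q$ after symmetry. We then split into two cases.

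\textbf{All three agree.} Then every defect is a winding term. The three geodesics enter $x$ through three orthants $O_p,O_q,O_r$ that are pairwise non-adjacent; this uses \Cref{lem:omni_orthants} together with the fact that the concatenations are geodesics. The untwisted signs $\hat\omega_x$ of $\alpha$, $\beta$ and $\gamma$ record whether going $O_p\to O_q$, $O_q\to O_r$ and $O_r\to O_p$ is counterclockwise or clockwise in the $\mathbb{Z}$-indexed order of orthants at $x$. Three distinct integers cannot be cyclically increasing, nor cyclically decreasing. So these signs are never all equal: exactly two agree, and their sum is $\pm1$. All three twisted signs carry the same factor $s_p$, so the sum of the defects is $\pm1$.

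\textbf{Exactly two agree}, say $s_p=s_q=-s_r$. Then $K_{pq}=\omega_x(\alpha)=\pm1$. The remaining defects are $K_{qr}=-s_q$ and $K_{rp}=-s_r=s_q$, which cancel. The total is therefore $\pm1$.

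The main obstacle is justifying the winding-sign bookkeeping in the all-agree case. We must show that the orthant of $x$ containing each $\alpha_x$ is well defined, and that the turning direction at $x$ is determined by the integer indices of the orthants. We would argue this from \Cref{lem:hulls} Item v), applied to the hulls $H(p,x)$, $H(q,x)$ and $H(r,x)$ at their final cinch cusp $x$, together with the linear order on the orthants of a cusp. We also need that the same-sign condition forces distinct indices. This follows because the pairwise concatenations are geodesics turning at $x$, so each pair of orthants is non-adjacent.
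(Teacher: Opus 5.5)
Your proposal is correct and follows the paper's proof. You expand the three turning numbers with \Cref{lem:prewind_identity}, cancel the $\Theta(\alpha_x),\Theta(\beta_x),\Theta(\gamma_x)$ terms, and split by incoming signs: when exactly two agree, the two $-\sigma_x$ defects cancel and leave a single winding sign, and when all three agree, the linear order on the orthants of $x$ prevents the three untwisted winding signs from coinciding. The extra checks you flag are details the paper leaves implicit, namely that each truncated geodesic has an unstable edge and that the three orthants at $x$ are pairwise distinct (best read off from \Cref{lem:hulls} Item v) applied to the concatenations through $x$).
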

\begin{proof}
    Consider the unstable signs of the truncated geodesics as they approach $x$. Then either all geodesics approach with the same sign, or two agree and the other disagrees. Using the cyclic and inversion symmetries, we suppose that $\beta_x$ and $\gamma_x$ are positive. Then we have two cases according to the sign of $\alpha_x$. Consulting \Cref{fig:caseB}, we have:
     \begin{itemize}
         \item Two positive, one negative: $\alpha_x$ approaches negatively. Then:
         \begin{align*}
             \gw(p,q) &= \gw(\alpha_x) - \gw(\beta_x) + 1\\
             \gw(q,r) &= \gw(\beta_x)-\gw(\gamma_x) + \omega_x(\beta)\\
             \gw(r,p) &= \gw(\gamma_x) - \gw(\alpha_x) - 1.
         \end{align*}
         So their sum is $\omega_x(\beta)$. 
         \item All positive: $\alpha_x$ approaches positively. Then:
         \begin{align*}
             \gw(p,q) &= \gw(\alpha_x) - \gw(\beta_x) + \omega_x(\alpha)\\
             \gw(q,r) &= \gw(\beta_x)-\gw(\gamma_x) + \omega_x(\beta)\\
             \gw(r,p) &= \gw(\gamma_x) - \gw(\alpha_x) + \omega_x(\gamma)
         \end{align*}
         Since the incoming unstable signs at $x$ are positive, the $\omega_x$-terms are just $\hat{\omega}_x$-terms. The orthants around $x$ are linearly ordered, and so at least one of the winding terms is positive and another negative. Thus the triple sum of the winding terms is in $\{-1,1\}$. \qedhere
        \end{itemize}
\end{proof}
\begin{figure}[h]
        \centering
        \includegraphics[width=0.85\linewidth]{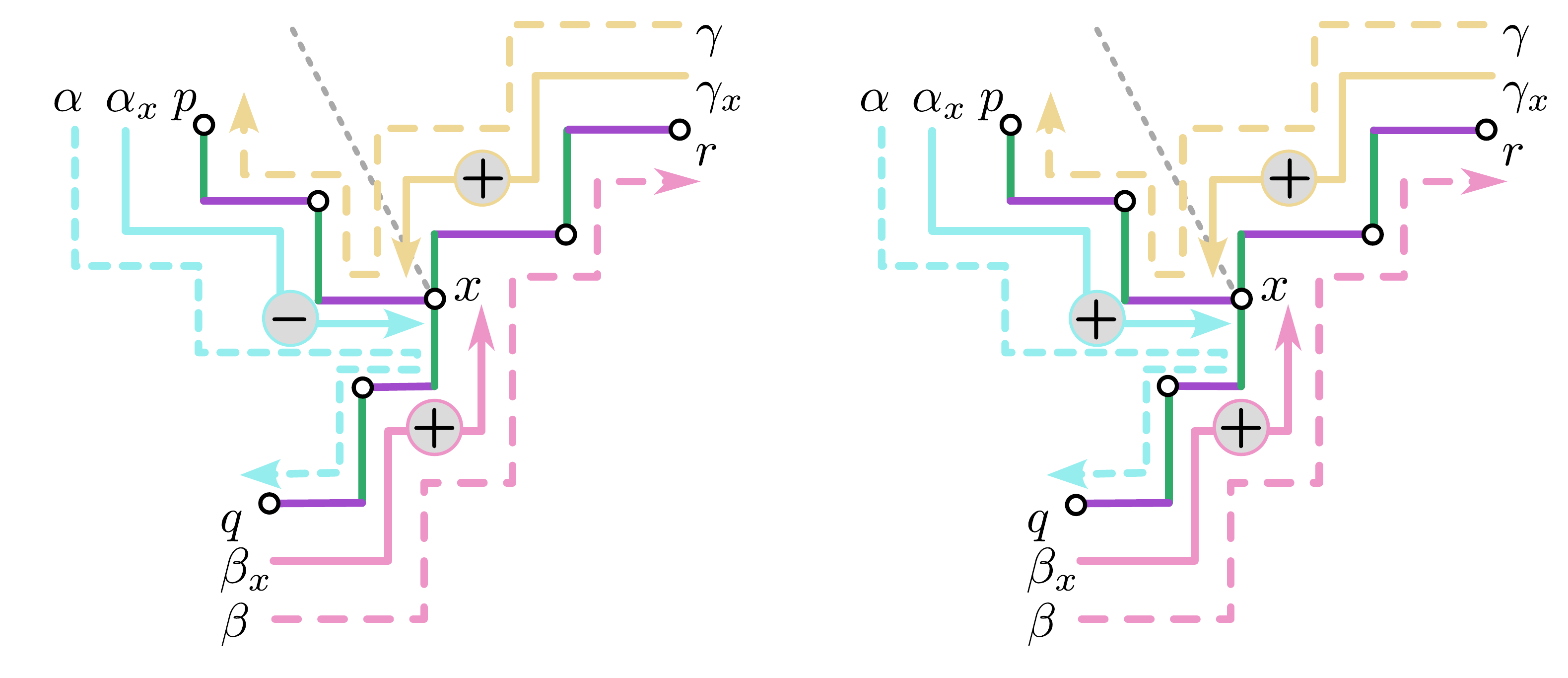}
        \caption{The geodesics for \Cref{claim:caseB}.}
        \label{fig:caseB}
    \end{figure}

The remaining case, where $x$ is material, requires additional analysis.

\begin{claim}\label{claim:turns}
        For one of $\alpha_x,\beta_x$ and $\gamma_x$, the vertex before $x$ is material. For the other two, the vertex before $x$ is a cusp.
     \end{claim}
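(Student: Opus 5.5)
The plan is to read off the local picture at the material median $x$ from the four half-leaves of $\cal{L}^s(x)$ and $\cal{L}^u(x)$. Each of $\alpha_x,\beta_x,\gamma_x$ ends with a final edge lying in one of these four half-leaves. Its preceding vertex is the far endpoint of that final edge, which is either material or a cusp.

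\textbf{Step 1: the final edges lie in three distinct half-leaves.} By \Cref{lem:follow_geodesics}, each of the concatenations $\alpha_x\beta_x^{-1}$, $\beta_x\gamma_x^{-1}$ and $\gamma_x\alpha_x^{-1}$ is a geodesic. If two of the final edges lay in the same half-leaf at $x$, the corresponding concatenation would backtrack along that leaf. Its intersection with the leaf would then fail to be connected, as required by \Cref{def:geodesic}, or it would contain a U-turn; either way it would not be a geodesic. So the three final edges occupy three of the four half-leaves at $x$. Hence two of them lie on opposite halves of a single leaf $l$ through $x$, and the third is perpendicular.

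Using the cyclic and inversion symmetries together with a choice of orientations, I would normalise as follows:
\begin{itemize}
\item $l$ is the stable leaf through $x$;
\item $\alpha_x$ arrives at $x$ northward from the south, and $\gamma_x$ arrives southward from the north;
\item $\beta_x$ arrives eastward along the western half of $\cal{L}^u(x)$.
\end{itemize}

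\textbf{Step 2: the perpendicular geodesic has a cusp before $x$.} Suppose the vertex before $x$ on $\beta_x$ were material. Then the previous edge of $\beta_x$ is stable and points either north or south. Whichever it is, one of $\beta_x\alpha_x^{-1}$ or $\beta_x\gamma_x^{-1}$ contains the three consecutive edges (north or south), east, then back the opposite way (south or north). These three edges are sides of a common small rectangle, which gives a U-turn and contradicts geodesicity by \cite[Lemma 6.10]{loom_new_arxiv}. So the vertex before $x$ on $\beta_x$ is a cusp.

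\textbf{Step 3: $\alpha_x$ and $\gamma_x$ are not both material before $x$.} Suppose the vertex before $x$ on $\alpha_x$ is material, so its previous edge is unstable. If that edge pointed east, then $\alpha_x\beta_x^{-1}$ would read east, north, west, which is a U-turn. So the previous edge of $\alpha_x$ points west. The same argument applied to $\gamma_x\beta_x^{-1}$ shows that, if the vertex before $x$ on $\gamma_x$ is material, its previous edge also points west. Now suppose both vertices are material. Then $\alpha_x\gamma_x^{-1}$ contains the sequence west, north (through $x$ along $l$), east. These three edges bound a small rectangle, which is again a U-turn. Hence at most one of $\alpha_x,\gamma_x$ is material before $x$.

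\textbf{Step 4: $\alpha_x$ and $\gamma_x$ are not both cusps before $x$.} If both were cusps, then both ends of the leaf $l$ would end at cusps, one to the south and one to the north. This contradicts \cite[Lemma 3.10]{loom_new_arxiv}, which says a leaf has a cusp at no more than one end. Combining Steps 2--4 gives exactly one material predecessor, namely one of $\alpha_x,\gamma_x$, and two cusp predecessors, as claimed.

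\textbf{Expected difficulty.} I expect the main obstacle to be the normalisation in Step 1. One must check that the exclusion of shared half-leaves really follows from geodesicity even when the final edges overlap only partially. One must also check that the symmetries (cyclic relabelling, reversing orientations) legitimately reduce to the configuration above without disturbing the claim, since the claim itself is symmetric in $p,q,r$. The U-turn checks in Steps 2 and 3 are routine once the picture is fixed. The subtle point there is that the relevant three edges genuinely bound a rectangle; this holds because all the edges involved are near $x$ and $x$ is material.
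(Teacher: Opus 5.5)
Your argument is correct and follows essentially the same route as the paper. Geodesicity of the pairwise concatenations puts the three final edges in distinct half-leaves at $x$, the fact that a leaf has a cusp at no more than one end rules out two cusp predecessors on the shared leaf, and U-turn arguments rule out additional material turns. Your normalisation makes explicit what the paper's sketch compresses: the perpendicular geodesic must have a cusp predecessor, and the single material turn occurs on the shared leaf, with its previous edge pointing away from the perpendicular geodesic.
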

     \begin{proof}
         There are four distinct half-leaves through $x$. If any two of the geodesics approach $x$ along the same half-leaf, then one of the concatenations must turn back on the same leaf through $x$, contradicting that it is a geodesic. So all three geodesics approach $x$ in different half-leaves through $x$.
         
         Since leaves of a loom space meet at most one cusp, the corner on at least one of the geodesics before $x$ is material. Say that it is $\beta_x$ that takes a material turn. If $\alpha_x$ also takes a material turn, then it would turn into a leaf that meets $\beta_x$ or $\gamma_x$. This would give $\beta$ or $\gamma$ a U-turn, respectively.  So $\alpha_x$ turns at a cusp before $x$. Similarly, $\gamma_x$ turns at a cusp before $x$. 
     \end{proof}
     
\begin{claim}\label{claim:caseC}
    The cycle identity \eqref{eq:cycle_identity}  holds when $x$ is material. 
\end{claim}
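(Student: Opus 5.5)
We follow the same template as \Cref{claim:caseB}: we express each of $\gw(p,q)$, $\gw(q,r)$, $\gw(r,p)$ through \Cref{lem:prewind_identity} applied at $x$. We then check that the local defects sum to $\pm1$.

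The first step is to pin down the local configuration at $x$ using \Cref{claim:turns}. Up to the cyclic symmetry $(p,q,r)\mapsto(q,r,p)$ and the inversion symmetry (reversing orientations, using $\gw(p,q)=-\gw(q,p)$), we may assume that $\beta_x$ makes a material turn just before $x$, while $\alpha_x$ and $\gamma_x$ pass through cusps just before $x$. By the proof of \Cref{claim:turns}, the three geodesics arrive along three distinct half-leaves of $x$. The concatenations $\alpha$, $\beta$, $\gamma$ have no U-turns at $x$, and this constrains which half-leaves are possible. If $\alpha_x$ and $\gamma_x$ arrived along the two halves of the same leaf, then $\beta_x$ would arrive along a half of the transverse leaf. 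The material turn of $\beta_x$ would then produce a U-turn in $\beta$ or in $\alpha$ (its reverse), which is impossible. So I expect that exactly one of the three arrives along an unstable half-leaf and the other two along the two stable half-leaves, or the roles reversed.

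The second step is to do the sign bookkeeping. The incoming sign $\sigma_x$ is determined by the last unstable edge of each truncated geodesic. At a material point, $x$ is never a turning cusp. Hence in \Cref{lem:prewind_identity} only the second case, or the ``no turn, signs agree'' situation, can occur.

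I would prove a small variant of \Cref{lem:prewind_identity} for material $x$. If the incoming signs agree, then the concatenation reverses unstable direction at $x$. That forces a U-turn unless the reversal is separated by a cinch cusp, so this case cannot occur at a material $x$ for any geodesic pair. If the incoming signs disagree, the correction term is $-\sigma_x(\alpha)$. Among three signs in $\{\pm1\}$, some two must agree, so at least one pair appears to violate this. The resolution is that the last unstable edge may lie before the cusp that precedes $x$ on $\alpha_x$ or $\gamma_x$. That cusp is then a cinch cusp of the concatenation, and it is where the turn occurs, contributing an $\omega$-term exactly as in \Cref{claim:caseB}.

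The computation should then reduce to the following. Two of the three equations carry $\mp\sigma$ corrections that cancel, as in the ``two positive, one negative'' case of \Cref{claim:caseB}. The third carries a single $\omega$-term from the cusp preceding $x$. Summing the three equations makes the $\gw(\alpha_x),\gw(\beta_x),\gw(\gamma_x)$ terms telescope, leaving a value in $\{-1,1\}$.

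The main obstacle is the second step: correctly identifying which cusp before $x$ (on $\alpha_x$ or $\gamma_x$) becomes a turning cusp for which concatenation. This requires checking that the winding directions there are consistent for the two geodesics through it. I would handle it by a case check on the four possible half-leaf assignments. In each case I would draw the small rectangle at $x$, as in the material case of \Cref{lem:local_median}, and read off the orthants. If the bookkeeping gets messy, an alternative is to perturb $x$ to a nearby cusp, or to replace $x$ by the cusp preceding it on $\alpha_x$. That cusp is itself a median, since it lies on geodesics between each pair, which would reduce the claim to \Cref{claim:caseB}. I would try this reduction first, since it avoids new computation.
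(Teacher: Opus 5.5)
\textbf{There is a gap.} The shortcut you say you would try first fails, the step you call the main obstacle is left undone, and the case $a=p$ is missing.

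\textbf{The shortcut.} The cusp $a$ preceding $x$ on $\alpha_x$ is not a median. It lies on $\alpha$ and $\gamma$, but not in $H(q,r)$. To see this, note that $[a,x]\beta_x^{-1}$ and $[a,x]\gamma_x^{-1}$ are geodesics from $a$ to $q$ and from $a$ to $r$; they are subpaths of $\alpha$ and $\gamma^{-1}$. Both leave $a$ along the same cusp leaf of $a$, namely the one containing the last edge of $\alpha_x$. By \Cref{lem:omni_orthants}~i), $q$ and $r$ therefore lie in the same or adjacent closed orthants of $a$, so $a$ is not between them. The same argument rules out the cusp preceding $x$ on $\gamma_x$. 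The remark after \Cref{lem:medians} (uniqueness of medians, proof omitted there) also suggests that no cusp median exists to perturb to. So no reduction to \Cref{claim:caseB} is available.

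\textbf{The sign bookkeeping.} Your intermediate claim that agreeing incoming signs cannot occur at a material $x$ is also false. Some pair always agrees, and its reversal happens at a cusp before $x$.

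\textbf{Completing your direct route.} The route does go through, and it is leaner than the paper's. Normalise as the paper does: $\beta_x$ takes the material turn, and $\gamma_x$ arrives along $\mathcal{L}^u(x)$ from its last cusp $c$. Then $\alpha_x$ arrives along a stable edge directly from its last cusp $a$.

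The missing observation is that $\beta$ runs from $b$ through $x$ to $c$ with no cusp in between. The material turn guarantees an unstable edge after $b$, so $\beta$ has constant unstable direction on this stretch. Hence $\sigma_x(\beta_x)=-\sigma_x(\gamma_x)$; say these are $+1$ and $-1$. The $(q,r)$ equation is then always the disagreeing case, with correction $-1$.

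The sign $\epsilon=\sigma_x(\alpha_x)$ is the incoming sign of $\alpha_a$ at $a$. It agrees with exactly one of the other two signs. That concatenation turns at $a$ (not at $x$) and contributes its $\omega_a$-term. The remaining concatenation contributes $+1$. The total is therefore $\omega_a(\alpha)$ if $\epsilon=+1$ and $\omega_a(\gamma)$ if $\epsilon=-1$, which are exactly the sums in \Cref{table:sums}.

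When $a=p$, $\alpha_x$ is a stable segment, so $\Theta(\alpha_x)$ is undefined. In that case $\Theta(p,q)=-\Theta(\beta_x)$ and $\Theta(r,p)=\Theta(\gamma_x)$, so the sum is $-1$.

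\textbf{Comparison with the paper.} The paper splits each geodesic at all three last cusps $a,b,c$ and runs through eight sign cases with a defect table. Splitting at $x$ instead hides the winding data at $b$ and $c$ inside $\Theta(\beta_x)$ and $\Theta(\gamma_x)$, where it telescopes. This leaves two cases and needs no separate treatment of $b=q$ or $c=r$. The price is that you must verify your modified \Cref{lem:prewind_identity}, in which the turn sits at $a$ rather than at $x$. That verification amounts to one row of the paper's defect table.
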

\begin{proof}
    By \Cref{claim:turns}, one of the three geodesics takes a material turn before $x$. By cyclic symmetry, suppose that it is taken by $\beta_x$. One of $\alpha_x$ and $\gamma_x$ approaches $x$ along an unstable leaf. Up to relabelling, suppose that it is $\gamma_x$. Now let $a,\,b,\,c\in\cusps(\cal{L})$ be the last cusps before $x$ along $\alpha,\, \beta,\,\gamma$, respectively. Since geodesics cannot change unstable direction at a material point, the unstable direction of $\beta_x$ from $b$ to $x$ is the opposite of the direction from $c$ to $x$ in $\gamma_x$. Invariance of the identity under flipping the signs of the foliations allows us to fix the direction from $b$ to $x$ as positive in the unstable direction. 

     Let $s(\alpha)$ be the unstable direction of $\alpha$ from $a$ to $b$. We decompose $\alpha$ at both $a$ and $b$ and use \Cref{lem:prewind_identity} to compute:
    \[
    \gw(\alpha) = \gw(\alpha_a)-\gw(\beta_b) +d(\alpha),
    \]
    where the defect $d(\alpha)$ takes values as in the table:

    \begin{table*}[h]
        \centering
        \begin{tabular}{|c|c|c|}
        \hline
        
        & $\alpha$ turns at $a$
        &  $\alpha$ does not turn at $a$
        \\
        \hline
        $\alpha$ turns at $b$
        & $\omega_a(\alpha)$+$\omega_b(\alpha)+s(\alpha)$
        & $\omega_b(\alpha)$ \\
        
        $\alpha$ does not turn at $b$
        & $\omega_a(\alpha)$
        & $-s(\alpha)$ \\
        \hline
        \end{tabular}
    \end{table*}
    Our configuration has $s(\alpha)=-1$. We define $d(\beta)$ and $d(\gamma)$ similarly. These can be computed with a similar table. Our configuration has $s(\beta)=+1$ and $s(\gamma)=-1$. Then
    \[
    \Theta(p,q)+\Theta(q,r)+\Theta(r,p) = d(\alpha)+d(\beta)+d(\gamma).
    \]
    Suppose first that $a,b,c\notin \{p,q,r\}$. Varying the signs of the truncated geodesics as they approach their respective endpoints gives us eight cases to compute the triple sum. See \Cref{fig:caseC}, left. We use an ordered triple to denote the signs of $\alpha_a,\,\beta_b,\,\gamma_c$ as they approach their endpoints, respectively. We compute the defect terms and summarise them in \Cref{table:sums}. Within the sums, we use that winding terms of different geodesics at $b$ and $c$ cancel. 
    %
    %
    %
   \begin{table}[h]
        \centering
        \begin{tabular}{|c|c|c|c|c|}
        \hline
        $(\alpha_a,\beta_b,\gamma_c)$
        & $d(\alpha)$
        & $d(\beta)$
        & $d(\gamma)$
        & \text{Sum} \\
        \hline
        
        $(+,+,+)$
        & $\omega_a(\alpha)$
        & $\omega_c(\beta)$
        & $\omega_c(\gamma)$
        & $\omega_a(\alpha)$ \\
        
        $(-,+,+)$
        & $1$
        & $\omega_c(\beta)$
        & $\omega_a(\gamma)+\omega_c(\gamma)-1$
        & $\omega_a(\gamma)$ \\
        
        $(+,+,-)$
        & $\omega_a(\alpha)$
        & $-1$
        & $1$
        & $\omega_a(\alpha)$ \\
        
        $(-,+,-)$
        & $1$
        & $-1$
        & $\omega_a(\gamma)$
        & $\omega_a(\gamma)$ \\
        
        $(+,-,+)$
        & $\omega_a(\alpha)+\omega_b(\alpha)-1$
        & $\omega_b(\beta)+\omega_c(\beta)+1$
        & $\omega_c(\gamma)$
        & $\omega_a(\alpha)$ \\
        
        $(-,-,+)$
        & $\omega_b(\alpha)$
        & $\omega_b(\beta)+\omega_c(\beta)+1$
        & $\omega_a(\gamma)+\omega_c(\gamma)-1$
        & $\omega_a(\gamma)$ \\
        
        $(+,-,-)$
        & $\omega_a(\alpha)+\omega_b(\alpha)-1$
        & $\omega_b(\beta)$
        & $1$
        & $\omega_a(\alpha)$ \\
        
        $(-,-,-)$
        & $\omega_b(\alpha)$
        & $\omega_b(\beta)$
        & $\omega_a(\gamma)$
        & $\omega_a(\gamma)$ \\
        
        \hline
        \end{tabular}
        \caption[The triple sum in the material cases.]{The triple sum in the material cases according to the incoming signs of the three truncated geodesics at their endpoints.
        }
        \label{table:sums}
    \end{table}

    When at least one of $a,b,c$ is equal to one of $p,q,r$, the computations are similar, but not quite identical. When $a\neq p$, the defects are the same by setting $\omega_b$ and $\omega_c$ terms to be zero if $b=q$ or $c=r$, respectively. The triple sum is then the same as in \Cref{table:sums} in this case. In the case $a=p$, the sum of defects is always $-1$. See \Cref{fig:caseC}, right. We omit the computation for the sake of exposition. 
\end{proof}

We conclude the proof of the cycle identity. 
\begin{proof}[Proof of \Cref{lem:cycle_identity}]
Combine Claims~\ref{claim:caseA}, \ref{claim:caseB}, and \ref{claim:caseC}.
\end{proof}

\begin{figure}[ht]
        \centering
        \includegraphics[width=0.7\linewidth]{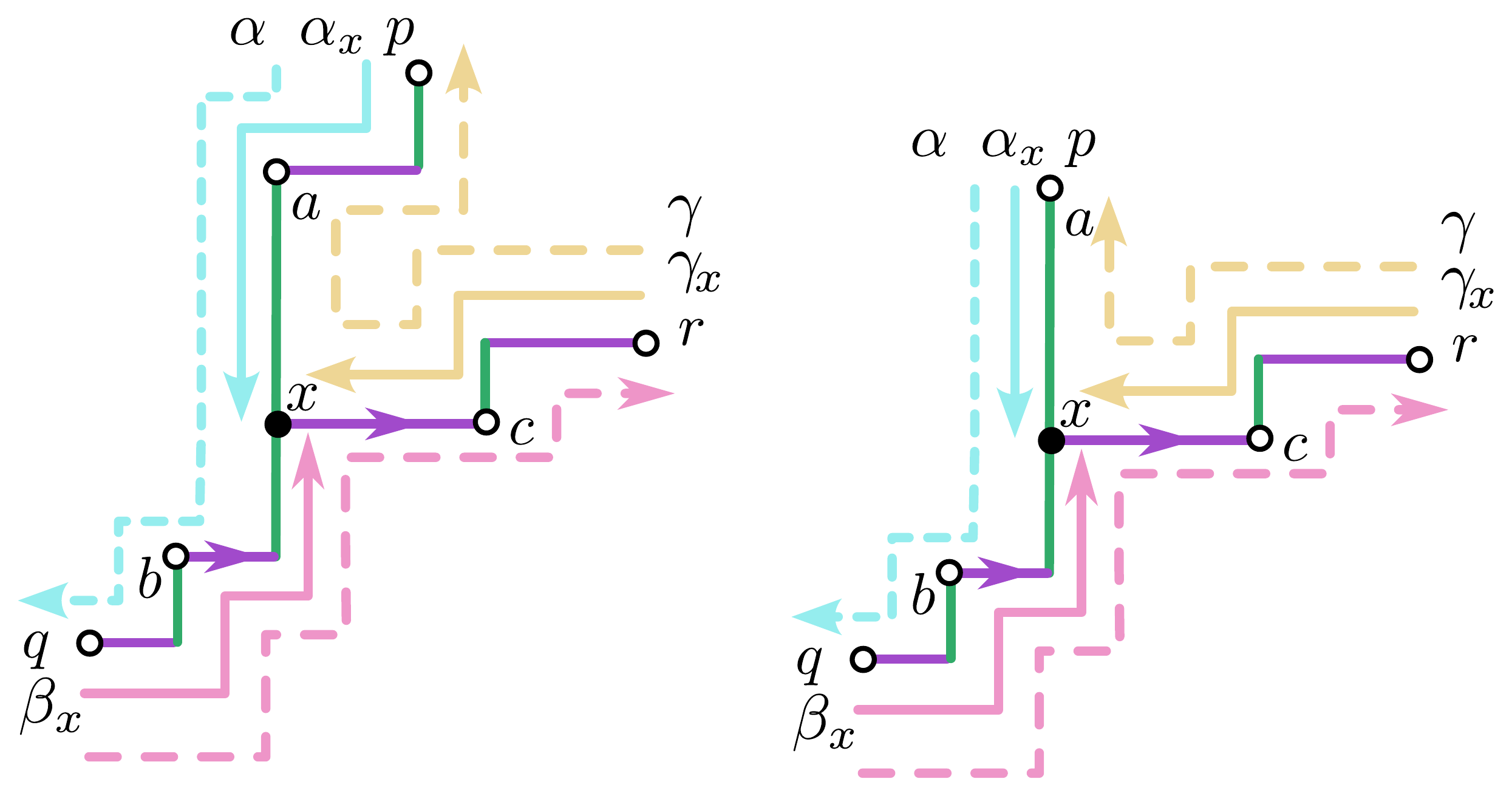}
        \caption{The geodesics for \Cref{claim:caseC}.}
        \label{fig:caseC}
\end{figure}

\begin{defn}\label{def:winding_order}
    For $p,q\in\cusps(\cal{L})$ distinct cusps, set $ p<_{\gw} q$ if $\gw(p,q)>0$.
\end{defn}

\begin{prop}\label{prop:total_order}
    The relation $<_{\gw}$ is a $\pi_1(M)$-invariant total order on $\cusps(\cal{L})$.
\end{prop}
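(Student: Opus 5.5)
The plan is to check the order axioms one by one, using \Cref{lem:order_basics} for everything except transitivity and the cycle identity \Cref{lem:cycle_identity} for transitivity.

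First, the basic properties. Irreflexivity holds because $<_{\gw}$ is only defined on distinct cusps. For distinct $p,q$, \Cref{lem:order_basics} gives $\gw(p,q)\neq 0$ and $\gw(q,p)=-\gw(p,q)$. So exactly one of $\gw(p,q)>0$ and $\gw(q,p)>0$ holds. This gives both totality (trichotomy) and asymmetry. Invariance under $\pi_1(M)$ is immediate from $\gw(g\cdot p,g\cdot q)=\gw(p,q)$, again from \Cref{lem:order_basics}. This uses the standing assumption of the subsection that the action preserves the orientations of both foliations.

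The main step, and the only real obstacle, is transitivity, and the cycle identity is designed to handle it. Suppose $p<_{\gw}q$ and $q<_{\gw}r$. If $p=r$, asymmetry gives a contradiction, so we may assume $p,q,r$ are distinct. We must show $\gw(p,r)>0$, or equivalently $\gw(r,p)<0$.

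The key observation is that every value of $\gw$ is an odd integer. The proof of \Cref{lem:order_basics} shows that $\gw(p,q)$ is a sum of an odd number of terms $\pm1$. Hence $\gw(p,q)\geq 1$ and $\gw(q,r)\geq 1$, so $\gw(p,q)+\gw(q,r)\geq 2$. Now suppose, for contradiction, that $\gw(r,p)>0$. Then $\gw(r,p)\geq 1$, and the triple sum $\gw(p,q)+\gw(q,r)+\gw(r,p)$ is at least $3$. This contradicts \Cref{lem:cycle_identity}, which says the absolute value of this sum is exactly $1$. Therefore $\gw(r,p)<0$, that is, $\gw(p,r)>0$, and so $p<_{\gw}r$.

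Combining these steps, $<_{\gw}$ is a strict total order on $\cusps(\cal{L})$, and it is $\pi_1(M)$-invariant. All the substantive work sits in \Cref{lem:cycle_identity}, which is already available. The only point needing care is the parity fact, and it is exactly the count of $k$ winding terms plus $k+1$ sign terms used in \Cref{lem:order_basics}.
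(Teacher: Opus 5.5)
Your proposal is correct and follows the paper's proof. The basic properties come from \Cref{lem:order_basics}, and transitivity comes from the cycle identity together with integrality of $\Theta$. Your argument by contradiction (a triple sum of at least $3$) is equivalent to the paper's direct computation $\Theta(p,r)=\Theta(p,q)+\Theta(q,r)\pm 1\geq 1$.
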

\begin{proof}
    Everything except transitivity is given by \Cref{lem:order_basics}.
    If $p\windingorder q$ and $q\windingorder r$, then \Cref{lem:cycle_identity} gives
    \[
    \gw(p,r)  = \gw(p,q)+\gw(q,r)\pm 1. 
    \]
    Since $\gw$ takes odd-integer values, $\gw(p,r)>0$. So $p\windingorder r$.
\end{proof}

Using the invariant identification between $\cusps(\cal{L})$ and $\cusps(\lift{M})$, we have:
\begin{thm}\label{thm:totalorder}
    Suppose that $M$ admits an edge-orientable veering triangulation $\cal{V}$. Then there is a $\pi_1(M)$-invariant total order on $\cusps(\lift{M})$. \qed
\end{thm}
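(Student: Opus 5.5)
The plan is to transport the order $\windingorder$ of \Cref{prop:total_order} from $\cusps(\cal{L})$ to $\cusps(\lift{M})$ along the equivariant correspondence coming from the Agol--Gu\'eritaud construction. The argument has three steps.

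\textbf{Step 1: the action preserves orientations.} First I would verify the standing hypothesis of \Cref{subsec:loom}'s later sections. Since $\cal{V}$ is edge-orientable, the $\pi_1(M)$-action on $\cal{L}$ preserves the orientations of both $\cal{L}^s$ and $\cal{L}^u$, as recorded at the start of \Cref{subsec:loom}. Fixing these orientations, the twisted turning number $\gw$ of \Cref{def:turning_number} is defined and well-defined. By \Cref{lem:order_basics} it is nonzero, antisymmetric and $\pi_1(M)$-invariant.

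\textbf{Step 2: $\windingorder$ is an invariant total order on $\cusps(\cal{L})$.} By \Cref{prop:total_order}, whose transitivity rests on the cycle identity \Cref{lem:cycle_identity} and hence on the medians of \Cref{lem:medians}, the relation $\windingorder$ is a $\pi_1(M)$-invariant total order on $\cusps(\cal{L})$.

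\textbf{Step 3: transfer to $\cusps(\lift{M})$.} The Agol--Gu\'eritaud construction gives a $\pi_1(M)$-equivariant bijection $\Phi\colon \cusps(\cal{L})\to\cusps(\lift{M})$, sending each cusp of $\cal{L}$ to a vertex of $\lift{\cal{V}}$. Here vertices of $\lift{\cal{V}}$ are identified with boundary components of $\lift{M}$. I would define $\Phi(p)<\Phi(q)$ if and only if $p\windingorder q$. This is a total order because $\Phi$ is a bijection. It is invariant because, for $g\in\pi_1(M)$,
\[
g\Phi(p)<g\Phi(q)\iff \Phi(gp)<\Phi(gq)\iff gp\windingorder gq\iff p\windingorder q.
\]

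\textbf{Main obstacle.} The only delicate point is making sure the correspondence $\Phi$ really is a bijection onto $\cusps(\lift{M})$ as defined in \Cref{def:cusps}. This means checking that distinct vertices of $\lift{\cal{V}}$ give distinct boundary components of $\lift{M}$. That holds because veering triangulations are essential (\cite[Theorem 16]{edge_essential}), so the endpoints of lifted edges lie on distinct cusps and the vertex-to-cusp map is injective. Surjectivity holds because every boundary component of $\lift{M}$ is a lifted ideal vertex. All of the substantive content, namely transitivity, has already been established in \Cref{prop:total_order}.
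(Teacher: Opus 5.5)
Your proposal is correct and is the paper's argument: the theorem is stated as an immediate consequence of \Cref{prop:total_order}, transported along the $\pi_1(M)$-equivariant identification of $\cusps(\cal{L})$ with $\cusps(\lift{M})$ coming from the Agol--Gu\'eritaud construction. One small correction to your ``main obstacle'': essentiality is not what makes this identification a bijection, since it only says that the two ends of each lifted edge lie on distinct cusps; the vertices of $\lift{\cal{V}}$ correspond to boundary components of $\lift{M}$ essentially by definition, via the lifted vertex links, so there is nothing there to check.
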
 

We finish by relating the total order to the partial orders induced by veering branched surfaces. This requires some familiarity with the Agol-Gu\'eritaud construction \parencites[\S 6]{loom_new_arxiv}[\S 4]{landry-minsky-taylor}{ag-talk}.

\begin{prop}\label{prop:compatible_order}
    Let $M$ admit an edge-orientable veering triangulation $\cal{V}$. Let $B$ be either of the cooriented veering branched surfaces of $\cal{V}$. Then there is a $\pi_1(M)$-invariant total order on $\cusps(\lift{M})$ that is compatible with $B$.
\end{prop}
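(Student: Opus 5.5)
The plan is to show that the order $<_{\gw}$ of \Cref{def:winding_order} is already compatible with $B$, after possibly reversing one foliation orientation. Since $<_B$ is the transitive closure of its relation on the endpoints of single edges, and $<_{\gw}$ is transitive by \Cref{prop:total_order}, it suffices to check one thing. For every edge $\lift{e}$ of $\lift{\cal{V}}$ oriented by $B$ from $c$ to $d$, we need $c<_{\gw} d$, that is, $\gw(c,d)>0$. \Cref{lem:order_basics} gives this whenever $c$ and $d$ lie on the west and east sides, respectively, of a common edge rectangle. So the proof reduces to showing that the $B$-orientation of each edge agrees with the west-to-east orientation of its edge rectangle, for a suitable choice of orientation on $\cal{L}^u$.

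The key step is to identify the edge orientation of $B$ through the Agol--Gu\'eritaud construction. Each edge $\lift{e}$ corresponds to an edge rectangle $R_e$ whose two cusp corners are its endpoints. Take $B=B^u$. Its sector dual to $\lift{e}$ is carried, via the correspondence with the blown-up weak-unstable foliation, by the unstable leaves crossing $R_e$. In $\cal{L}$, this sector corresponds to the family of $\cal{L}^s$-leaves through $R_e$: these are transverse to the $\cal{L}^u$ direction, so moving across the sector means moving along $\cal{L}^u$.

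The edge orientation of $B^u$ is defined by the coorientation of its sectors. Since the $\pi_1(M)$-action preserves the orientations of the foliations (edge-orientability), I expect a consistent choice of orientation on $\cal{L}^u$ whose induced coorientation on every sector of $B^u$ is the given one. This should be checked locally on a single veering tetrahedron. The tetrahedron corresponds to a rectangle with four cusps on its sides. Its six edges are the edge rectangles spanned by pairs of these cusps, and the train track description in \Cref{eg:vbs} (lower faces pointing to the lower $\pi$-edge, and so on) should be compared with the east--west positions of the cusps. The expected outcome is that every edge of the tetrahedron is oriented from its western cusp to its eastern cusp, consistently across face gluings.

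If the given coorientation instead matches the reversed orientation, I replace the orientation of $\cal{L}^u$ by its opposite. This is harmless: $<_{\gw}$ is still defined, is still invariant, and still satisfies \Cref{lem:order_basics} for the new orientation. For $B^s$ the same argument applies with stable and unstable exchanged. Reversing the roles, I would then either redefine the turning number using stable edges, or check directly that the $B^s$-orientation also points west to east, which seems plausible since the $\pi$-edges have cusps at the west and east corners.

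The main obstacle is this local verification that the $B$-orientations of all six edges of a tetrahedron are read off uniformly by the east--west order of the cusps, including the equatorial edges and both colourings. The statement of $<_B$ via edge rectangles at the start of \Cref{sec:totalorder}, with the relation "east-most", suggests the intended identification is exactly this. I would first try citing \cite[Lemma 10.30]{ss-link} for it, then finish via \Cref{lem:order_basics} and transitivity.
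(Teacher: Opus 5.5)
Your route is the paper's. $<_{\Theta}$ is an invariant total order, and \Cref{lem:order_basics} orients every edge towards the eastern cusp of its edge rectangle. By transitivity it is then enough to check, on each tetrahedron rectangle, that this orientation coorients $B$, and to invert the order (equivalently, reverse $\mathcal{L}^u$) if the global sign is wrong.

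Two points need correcting, though the conclusion survives once both surfaces are handled as below.

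\textbf{Which surface is paired with the east--west order.} The paper pairs the east--west order with $B^s$, not $B^u$. A sector that is crossed by moving along $\mathcal{L}^u$ is one carried by $\mathcal{L}^s$-leaves, that is, a sector of the stable surface. Your sentence identifying a $B^u$ sector with a family of $\mathcal{L}^s$-leaves conflates the two surfaces.

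\textbf{The fallback for the second surface fails.} You suggest checking that the other veering surface is \emph{also} cooriented west to east; this cannot succeed. Take a tetrahedron rectangle with cusps $W,N,E,S$ on its west, north, east and south sides.
\begin{itemize}
\item The east--west order is $W<\{N,S\}<E$, which makes $WE$ the long edge of both faces containing it.
\item The south--north order $S<\{W,E\}<N$ makes $NS$ the long edge of both faces containing it.
\end{itemize}
So on the face $NSW$, for instance, the two train tracks differ. By \Cref{lem:order_gives_branch} and \Cref{lem:smooth_flipped}, a cooriented branching determines its edge orientation up to global reversal. Since $B^s\neq B^u$ by \Cref{eg:vbs}, at most one of them is cooriented by the east--west order.

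Your supporting remark is also off: only one $\pi$-edge joins the west and east sides, and the other joins the north and south sides. For the second surface you need your other option, rebuilding $\Theta$ from the signs of the stable edges. That is what the paper means by ``the unstable case is similar''.
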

\begin{proof}
    Assume $B=B^s$ is the stable branched surface; the unstable case is similar. Since $\cal{V}$ is edge-orientable, $<_{\Theta}$ is a $\pi_1(M)$-invariant total order by \Cref{prop:total_order}. 

    Fix $e$ an edge of $\cal{V}$. Then $<_{\Theta}$ orients $e$ to point to the east-most cusp in the boundary of the edge rectangle corresponding to $e$ by \Cref{lem:order_basics}. In the Agol-Gu\'eritaud construction, every tetrahedron in $\cal{V}$ corresponds to a \emph{tetrahedron rectangle} in $\cal{L}$. For the four types of veering tetrahedron, one can check that the east-most edge direction imposed by its tetrahedron rectangle coorients $B^s$. Inverting $<_{\Theta}$ if necessary, we obtain the claim.
    %
\end{proof}
\begin{rem}
    The compatible total order on $\cusps(\lift{M})$ is not unique. Our construction can be modified to give several such orders. First, we can replace $\omega$ with $\hat{\omega}$. The proof that this defines a total order is nearly identical.
    
    In either of the orders above, we can negate one of the sums in the definition of $\gw$ (\Cref{def:turning_number}) and obtain a distinct order. If $\cal{V}$ has multiple vertices, we can choose a sign independently for each cusp orbit. 
    %
\end{rem}

\subsection{Veering triangulations give branchings}\label{subsec:vts_give_branchings}
We prove that veering triangulations yield branchings, establishing \Cref{thm:smoothcriterion}. To do so, we want to find branchings, not just cooriented branchings. So, we no longer assume that $\cal{V}$ is edge-orientable. Its edge-orientable double cover thus admits a cooriented branching and a symmetry that preserves the veering structure but inverts the edge orientations of the branching.


\begin{lemma}\label{lem:twisted_order_flip}
    For $g\in \pi_1(M)$, suppose that the induced action of $g$ on $\cal{L}$ reverses the orientations of the foliations of $\cal{L}$. Then
    \[\Theta(g\cdot p,g\cdot q) = -\Theta(p,q).\]
\end{lemma}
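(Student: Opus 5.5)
The plan is to track how each ingredient of $\gw$ transforms under $g$. The action of $g$ is a homeomorphism of $\cal{L}$ preserving both foliations and sending cusps to cusps. So it sends geodesics to geodesics, since it preserves leaves and hence the connectedness condition of \Cref{def:geodesic}. It also sends hulls to hulls, cinch cusps to cinch cusps, and the polygon decomposition of $H(p,q)$ given by \Cref{lem:hulls} to that of $H(g\cdot p, g\cdot q)$. Fix a geodesic $\gamma$ from $p$ to $q$. Then $g\cdot\gamma$ is a geodesic from $g\cdot p$ to $g\cdot q$. Because the decomposition is determined by the hull, $g\cdot\gamma$ has the turning cusps $g\cdot t_j$ and subgeodesics $g\cdot\gamma_i$.

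First I will check the $\sigma$-terms. By hypothesis, $g$ reverses the orientation of $\cal{L}^u$, so every eastward unstable edge becomes westward and vice versa. This gives $\sigma(g\cdot\gamma_i) = -\sigma(\gamma_i)$. In particular, a cusp is a turning cusp for $\gamma$ exactly when its image is one for $g\cdot\gamma$, since "opposite directions" is preserved when both signs flip. This confirms the turning-cusp correspondence.

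Next I will check the untwisted winding signs. Since $g$ is an orientation-preserving deck transformation of the oriented $\lift{M}$, and it reverses both foliation orientations, it acts on $\cal{L}$ as an orientation-preserving map, locally a rotation by $\pi$. Indeed, reversing both the stable and unstable orientations preserves the orientation of the plane determined by the ordered pair of foliations. Hence $g$ preserves the counterclockwise order on the orthants of each cusp, and $\hat\omega_{g\cdot t_j}(g\cdot\gamma) = \hat\omega_{t_j}(\gamma)$. This is the step to be most careful about. I would justify it by noting that the orientation of $\cal{L}$ is $\pi_1(M)$-invariant, because $\cal{L}$ is the orbit space of a flow on an oriented manifold (\Cref{rem:flow}), or equivalently through the Agol--Gu\'eritaud correspondence with the oriented $\cal{V}$. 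If that invariance were unavailable, one would instead need the two foliation flips to compensate. Either way, the consequence is that $\hat\omega$ is unchanged.

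Combining these, the twisted sign satisfies $\omega_{g\cdot t_j}(g\cdot\gamma) = \sigma(g\cdot\gamma_{j-1})\hat\omega_{g\cdot t_j}(g\cdot\gamma) = -\omega_{t_j}(\gamma)$. Every term of \eqref{eqn:turning_number} is therefore negated, so $\gw(g\cdot p, g\cdot q) = -\gw(p,q)$. This also explains the preference for $\omega$ over $\hat\omega$: with $\hat\omega$, only the $\sigma$-sum would flip, and the identity would fail.
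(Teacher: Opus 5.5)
Your proposal is correct and follows the paper's proof essentially verbatim: $g\cdot\gamma$ is a geodesic with turning cusps $g\cdot t_j$, the $\sigma$-terms are negated because $g$ reverses the unstable orientation, $\hat\omega$ is invariant because $g$ preserves the orientation of $\mathcal{L}$, and hence every $\omega$-term is negated. Your extra remark, that a map preserving both foliations and reversing both of their orientations is automatically orientation-preserving on $\mathcal{L}$, is a self-contained justification of the step the paper simply asserts.
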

\begin{proof}
    Let $\gamma$ be a geodesic from $p$ to $q$. Since $g$ preserves the foliations, $g\cdot \gamma$ is a geodesic from $g\cdot p$ to $g\cdot q$. The reversed foliation directions imply that $\sigma$-terms in $\Theta(g\cdot p, g\cdot q)$ are negations of the $\sigma$-terms in $\Theta(p,q)$. Since $g$ preserves the orientation on $\cal{L}$, $\hat{\omega}$ is invariant under $g$. So, the direction reversal on the foliations implies that $g$ negates the $\omega$-terms.
\end{proof}

\begin{thm}\label{thm:smoothcriterion}
    Suppose that $M$ admits a veering triangulation. Let $\cal{T}$ be an essential ideal triangulation of $M$. Then $\cal{T}$ is branchable. 
\end{thm}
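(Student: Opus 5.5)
The plan is to orient the edges of $\lift{\cal{T}}$ using the twisted turning number $\gw$. This orientation will not be $\pi_1(M)$-invariant. It will, however, be invariant up to a global reversal by the elements that flip the foliations. By \Cref{lem:smooth_flipped}, such a reversal does not change the induced branching, so the branching will descend to $\cal{T}$.

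Let $\cal{V}$ be a veering triangulation of $M$ and let $\cal{L}$ be its loom space, with a fixed orientation on each foliation. Every $g\in\pi_1(M)$ either preserves both foliation orientations or reverses both. The veering colouring distinguishes the foliations, and the $\pi_1(M)$-action preserves the orientation of $\cal{L}$ because $M$ is oriented. Hence "reverses exactly one" cannot occur, and we get a homomorphism $\epsilon\colon\pi_1(M)\to\{\pm1\}$ whose kernel is the fundamental group of the edge-orientable double cover.

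The definition of $\gw$ in \Cref{def:turning_number} and the proofs of \Cref{lem:order_basics} (first two items), \Cref{lem:medians}, \Cref{lem:cycle_identity} and \Cref{prop:total_order} (antisymmetry, nonvanishing and transitivity) use only the fixed orientations on $\cal{L}$. None of them uses how $\pi_1(M)$ acts. So $<_{\gw}$ is a total order on $\cusps(\cal{L})\cong\cusps(\lift{M})$ with no equivariance assumed. The equivariance it does satisfy is
\[
\gw(g\cdot p,g\cdot q)=\epsilon(g)\,\gw(p,q).
\]
This combines \Cref{lem:order_basics} for $\epsilon(g)=1$ with \Cref{lem:twisted_order_flip} for $\epsilon(g)=-1$. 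Thus $g$ preserves $<_{\gw}$ when $\epsilon(g)=1$ and reverses it when $\epsilon(g)=-1$.

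Now let $\cal{T}$ be essential. Each lifted edge $\lift{e}$ of $\lift{\cal{T}}$ has distinct cusp endpoints, and we orient it toward the $<_{\gw}$-greater endpoint. Within each lifted tetrahedron, this orientation comes from a total order on its four vertices, so it is acyclic. By \Cref{con:edge_o_to_branch}, it gives a cooriented branching $\lift{B}$ on $\lift{\cal{T}}$.

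For $g\in\pi_1(M)$, the image $g\cdot\lift{B}$ is the branching determined by the $g$-translated orientation. By the equivariance above, that orientation is either the original one or its full reverse. In a single tetrahedron, \Cref{lem:smooth_flipped} says that reversing all edge orientations gives the same smoothing with opposite coorientation. Hence, as an unoriented branched surface, $g\cdot\lift{B}=\lift{B}$ for every $g$. The smoothing at each vertex of $D(\lift{\cal{T}})$ is therefore $\pi_1(M)$-invariant and descends to a smoothing of $D(\cal{T})$. These smoothings agree across faces because they do so upstairs. So $\cal{T}$ is branchable.

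The main point to check carefully is the dichotomy for $\epsilon$: that no element reverses one foliation orientation and preserves the other. This is what makes the sign flip global, so that \Cref{lem:smooth_flipped} applies tetrahedron by tetrahedron. I would justify it by noting that the action preserves the orientation of $\cal{L}$, since $M$ is oriented and the flow direction is preserved. Therefore the action reverses either both foliation orientations or neither. This is also the hypothesis implicitly used in the proof of \Cref{lem:twisted_order_flip}. A second point to check is that transitivity of $<_{\gw}$ really does not use the group action. I would verify this directly, since the cycle identity is a purely geometric statement about three cusps in $\cal{L}$.
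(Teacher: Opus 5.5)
Your proposal is correct and follows essentially the paper's argument: the paper passes to the edge-orientable double cover, uses \Cref{lem:twisted_order_flip} to see that the deck involution reverses $<_{\gw}$, and then uses \Cref{lem:smooth_flipped} to descend the branching, which is exactly your $\epsilon(g)=-1$ case. The only difference is bookkeeping: working directly in $\lift{\cal{T}}$ with the sign homomorphism $\epsilon$ merges the paper's edge-orientable and non-edge-orientable cases into one argument.
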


\begin{proof}
     Let $\cal{V}$ be a veering triangulation of $M$. Suppose that $\cal{V}$ is edge-orientable. \Cref{thm:totalorder} gives an invariant total order on $\cusps(\lift{M})$, and hence a coorientable branching on $\cal{T}$ by \Cref{prop:cusps_give_branching}. 
    
    Now suppose that $\cal{V}$ is not edge-orientable. Lift $\cal{V}$ to its edge-orientable double cover  $\overline{\cal{V}}$. Let $\overline{M}$ be the corresponding double cover of $M$. By \Cref{thm:totalorder}, $<_{\Theta}$ is a $\pi_1(\overline{M})$-invariant total order.

    For $\cal{T}$ an essential triangulation of $M$, let $\overline{\cal{T}}$ be its lift under the edge-orientable double cover of $\cal{V}$. Apply \Cref{prop:cusps_give_branching} to endow $\overline{\cal{T}}$ with the branching $\overline{B}$ that is compatible with $<_{\Theta}$. Let $h:\overline{M}\to\overline{M}$ be the deck transformation of the double cover. Then $h$ is a veering automorphism of $\overline{\cal{V}}$ that inverts the edge orientations of $\overline{\cal{V}}$. Let $\cal{L}(\overline{\cal{V}})$ be the loom space of $\overline{\cal{V}}$. Then the induced action of $h$ on $\cal{L}(\overline{\cal{V}})$ inverts the orientations on both foliations. By \Cref{lem:twisted_order_flip}, applying $h$ reverses $<_{\Theta}$. So, $h$ reverses the edge orientations on $\overline{\cal{T}}$. By \Cref{lem:smooth_flipped}, $h$ preserves the underlying smoothings on the tetrahedra of $\overline{\cal{T}}$. Thus the branching $\overline{B}$ descends to $\cal{T}$. 
\end{proof}

\begin{rem}\label{rem:reason_for_twisted}
    \Cref{lem:twisted_order_flip} is precisely why we choose to use the twisted winding sign, $\omega$, in the definition of $\gw$. Using only $\hat{\omega}$, only one of the sums in $\gw$ would invert under flipping the orientation on the foliations of $\cal{L}$. So, we would not obtain a branching that descends through the edge-orientable double cover. 
\end{rem}

\begin{rem}\label{rem:order_compute}
We can `see' the total order explicitly. Let $\cal{V}$ be a veering triangulation. Let $<_{\gw^s}$ and $<_{\gw^u}$ be the total orders using the $<_{\Theta}$ construction for the stable and unstable foliations on the loom space associated to $\cal{V}$. Frankel-Schleimer-Segerman show that $\lift{\cal{V}}$ admits a \emph{continental exhaustion}. In short, this means that starting with a single tetrahedron, one can exhaust the universal cover by continually layering tetrahedra above and below it \cite[\S 4-5]{ss-link}. A \emph{continent} $C$ is a ball obtained at some step of this process. Using the correspondence between veering triangulations and loom spaces, the structure of $C$ provides enough data to compute both $<_{\gw^u}$ and $<_{\gw^s}$ on the subset of cusps corresponding to vertices of $C$.

We implement this procedure \cite{me_data}. Example approximated total orders are shown in \Cref{fig:total_order}.
\end{rem}


\begin{figure}[h]
    \centering
    \includegraphics[width=0.6\linewidth]{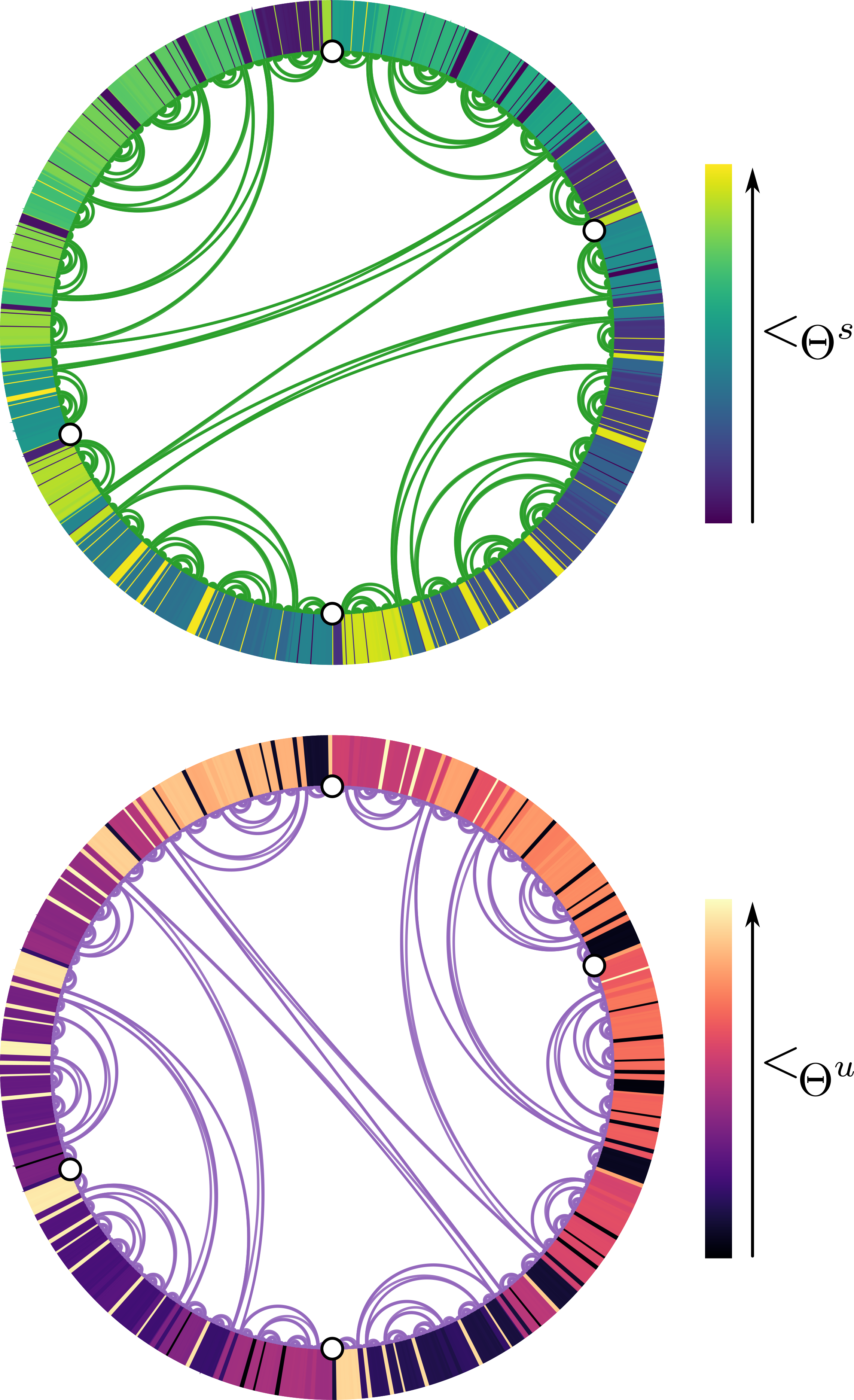}
    \caption[Total cusp orders on a loom space.]{An approximation of total cusp orders $<_{\gw^u}$ and $<_{\gw^s}$ described in \Cref{rem:order_compute}. The orders are evaluated on a large continent $C$ in the universal cover of the figure-eight knot complement. Each shaded sliver represents a cusp on the boundary circle. The colour shows its rank in the marked total order among the cusps in $C$. The four marked vertices are those of the initial tetrahedron. The interior of each disk shows the continent's approximation of the stable and unstable foliations (blown up to laminations) of the loom space. See \cite[\S 5-10]{ss-link} for more details on this framework.
    }
    \label{fig:total_order}
\end{figure}

\section{Applications}\label{sec:examples}

\subsection{Census manifolds}\label{subsec:census}
To determine whether a given triangulation is branchable, we perform a simple depth-first search. We assign branchings to tetrahedra one-by-one, propagate any implied structure, and discard a partial assignment if it yields a contradiction. We note that Dunfield found foliar edge orientations by encoding the problem as a satisfiability problem \cite[\S 7.9]{dunfield-foliar}. It may be possible to do something similar in our generalised case, but the depth-first search is sufficiently fast for our purposes. The implemented check uses features from both \emph{Regina} and \emph{Veering} \cite{regina,Veering}. 

We apply this check to Shana Li's census of orientable cusped hyperbolic three-manifolds up to eleven tetrahedra \cite{ shana_11_tets, shana_10_tets}. Since the triangulations are geometric, they are essential. However, because unbranchability depends on the triangulation (\Cref{eg:m006}), we also check a collection of triangulations obtained by random sequences of Pachner moves. For such examples, we certify essentiality using the existence of a taut structure.

 We apply \Cref{thm:smoothcriterion} to the unbranchable triangulations that we find.

\begin{thm}

\label{thm:census_no_veering}
    At least 165,065 manifolds in the census of orientable cusped hyperbolic three-manifolds up to eleven tetrahedra do not admit veering triangulations. \qed
\end{thm}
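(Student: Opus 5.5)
The statement is a computational consequence of \Cref{thm:smoothcriterion}. By that theorem, if an orientable cusped hyperbolic manifold $M$ has even one essential ideal triangulation $\cal{T}$ that is not branchable, then $M$ admits no veering triangulation. So the proof reduces to producing, for each of the 165,065 manifolds in Li's census, an explicit essential triangulation together with a certificate that it is unbranchable.

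\textbf{Essentiality.} I would certify it in one of two ways, both using \cite[Theorem 16]{edge_essential}.
\begin{itemize}
\item The census triangulations are geometric, hence essential.
\item For any other triangulation reached by Pachner moves, I would exhibit a taut angle structure. This is a solution of a small linear system with $\{0,\pi\}$ entries that can be checked directly, and it implies essentiality.
\end{itemize}

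\textbf{Unbranchability.} A branching is a choice, for each tetrahedron, of one of its twelve smoothings of the dual spine, subject to agreement across faces. Agreement across a face means the face's train track, which picks out one long edge of the face, must be the same from both sides. So branchability is a finite constraint satisfaction problem: one variable per tetrahedron with twelve values, and pairwise constraints per face. I would run an exhaustive depth-first search. It assigns smoothings one tetrahedron at a time, propagates the forced long-edge choice to neighbouring tetrahedra through the face pairings, and backtracks on contradiction. A search that terminates with no surviving assignment is a proof that $\cal{T}$ is unbranchable, since every partial assignment is either extended or refuted. As an independent check, I would encode the same constraints as a SAT instance and verify that it is unsatisfiable.

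\textbf{Procedure and counting.} For each census manifold, first test the geometric census triangulation. If it is branchable, apply random sequences of Pachner moves. Keep only those resulting triangulations that carry a taut structure, and test each for branchability. This step is needed because branchability depends on the triangulation, as \Cref{eg:m006} shows. Record each manifold for which some certified-essential triangulation is unbranchable. The count of such manifolds gives the lower bound of 165,065. It is only a lower bound because the random search may miss unbranchable triangulations of other manifolds.

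\textbf{Main obstacle.} The mathematics is already in \Cref{thm:smoothcriterion}; the difficulty is making the computation trustworthy at scale. The points to verify are:
\begin{itemize}
\item the search is genuinely exhaustive, with no incorrect pruning;
\item the face-agreement rule correctly matches train tracks under the face gluing permutations;
\item essentiality is certified for every non-census triangulation used.
\end{itemize}
I would address these by validating the code on cases where branchability is known. Veering triangulations must come out branchable. Triangulations of manifolds with bi-orderable groups, such as the figure-eight knot complement, must be coorientably branchable by \Cref{prop:bi-order_branching}. Finally, I would cross-check the depth-first search results against the SAT encoding.
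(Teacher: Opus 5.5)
Your proposal is correct and follows the paper's own argument. The paper likewise applies \Cref{thm:smoothcriterion} to unbranchable triangulations found by a depth-first search over tetrahedron smoothings, run on the geometric (hence essential) census triangulations and on triangulations reached by random Pachner moves whose essentiality is certified by a taut structure; your SAT cross-check is an extra safeguard the paper mentions via Dunfield's work but does not itself use.
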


See \Cref{fig:census}. The data and code are available at \cite{me_data}.

\begin{figure}[h]
    \centering
    \includegraphics[width=0.9\linewidth]{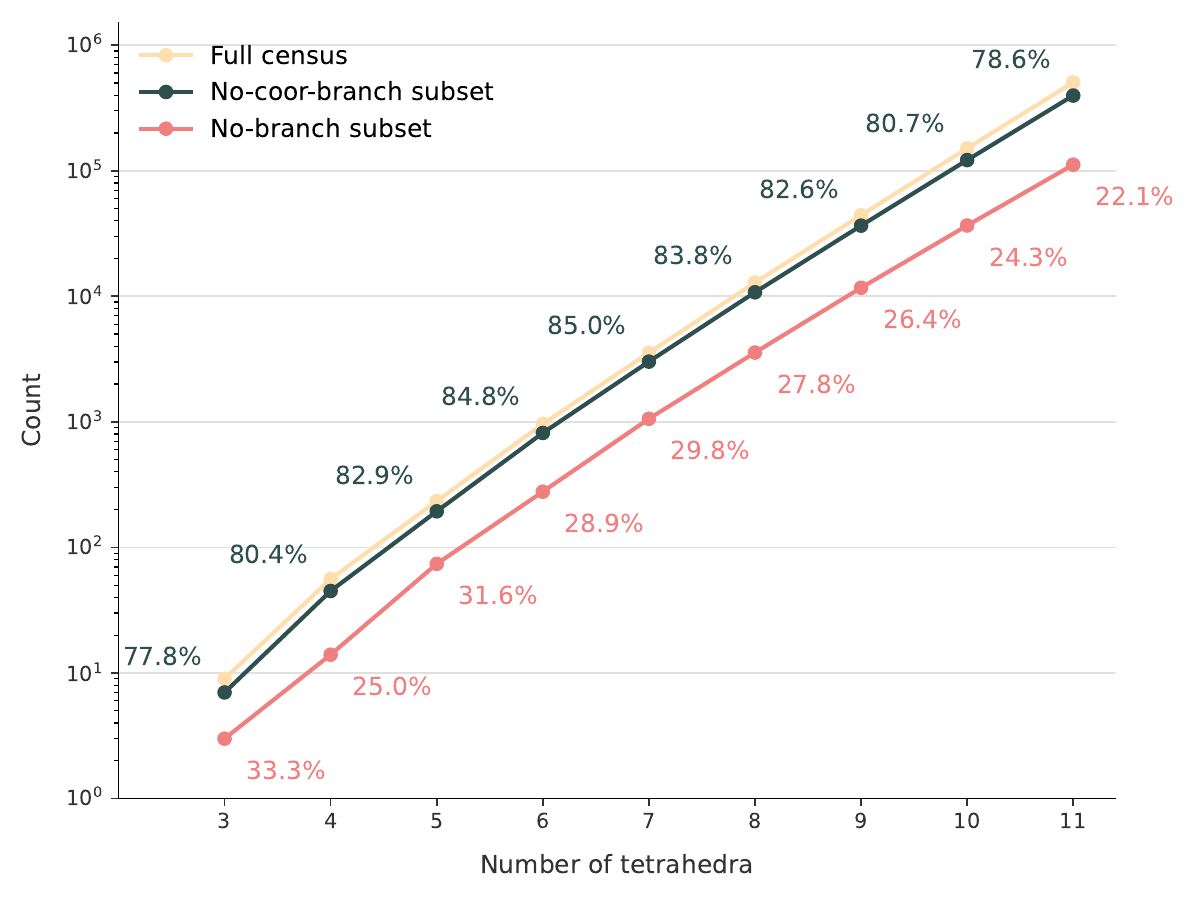}
    \caption[Census manifolds without veering triangulations.]{The distribution of three-manifolds for which we find unbranchable essential triangulations. For each tetrahedron count, we mark the percentage of manifolds for which we find either unbranchable triangulations or triangulations that are not coorientably branchable.}
    \label{fig:census}
\end{figure}

Triangulations that are not coorientably branchable are more prevalent. We find such census examples by augmenting the original depth-first search for branchings. Applying \Cref{prop:bi-order_branching} to these examples, we obtain:
\begin{thm}
\label{thm:biordercensus}
    At least 570,207 manifolds in the census of orientable cusped hyperbolic three-manifolds up to eleven tetrahedra do not have bi-orderable fundamental groups. The same manifolds also do not admit edge-orientable veering triangulations.\qed 
\end{thm}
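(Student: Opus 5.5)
The plan is to combine \Cref{prop:bi-order_branching} with the computational census search of \Cref{subsec:census}. The argument is by contrapositive. For each manifold $M$ counted, it suffices to exhibit one essential ideal triangulation $\cal{T}$ of $M$ that is not coorientably branchable. Then \Cref{prop:bi-order_branching} shows that $\pi_1(M)$ is not bi-orderable.

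For the second assertion, I would use the edge-orientable case of \Cref{thm:smoothcriterion}. Suppose $M$ admitted an edge-orientable veering triangulation. \Cref{thm:totalorder} would give a $\pi_1(M)$-invariant total order on $\cusps(\lift{M})$. \Cref{prop:cusps_give_branching} would then make every essential triangulation of $M$ coorientably branchable. So the same witness triangulation $\cal{T}$ rules out edge-orientable veering triangulations as well.

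The concrete steps are as follows.
\begin{enumerate}
\item Take Li's census triangulations. These are geometric, hence essential by \cite[Theorem 16]{edge_essential}.
\item Supplement them with triangulations obtained by random sequences of Pachner moves. For each of these, certify essentiality by finding a taut structure, again using \cite[Theorem 16]{edge_essential}.
\item For each triangulation, run the depth-first search over locally acyclic edge orientations. By \Cref{lem:order_gives_branch}, these are exactly the cooriented branchings. The search assigns one of the finitely many acyclic orientations to each tetrahedron in turn, propagates the forced edge orientations through shared edges, and backtracks on contradiction.
\item If the search tree is exhausted with no solution, record $\cal{T}$ as a certificate for $M$.
\end{enumerate}
The count of 570,207 is then simply the number of census manifolds for which such a certificate is found.

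I expect the main obstacle to be computational rather than mathematical. Two things need checking. First, the search must be exhaustive and correct, so that ``no coorientable branching found'' really means none exists. Second, every Pachner-modified triangulation used as a witness must be verified essential, since \Cref{prop:bi-order_branching} applies only to essential triangulations. The lower bound is then only as strong as the implementation recorded in \cite{me_data}, which I would cite as the proof.
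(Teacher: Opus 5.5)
Your proposal is correct and follows the paper's argument: an essential witness triangulation (a geometric census triangulation, or a Pachner-modified one certified essential by a taut structure) that the exhaustive search shows is not coorientably branchable rules out bi-orderability via \Cref{prop:bi-order_branching}, and rules out edge-orientable veering triangulations via \Cref{thm:totalorder} combined with \Cref{prop:cusps_give_branching}. As you note, the count itself is simply the output of the computation in \cite{me_data}.
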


We note that large collections of manifolds without edge-orientable veering triangulations were already known, since the exterior of an L-space knot or link cannot admit such a structure (see, for example, \cite{misha-dunfield-egs}). Still, unbranchability is easy to detect.

%
\begin{example}

Dunfield found that at least 8,153 manifolds in the cusped census up to nine tetrahedra admit persistently foliar triangulations \cite[\S 8]{dunfield-foliar}. We are unable to find a triangulation without a coorientable branching for any of these manifolds. So, we conjecture that a persistently foliar structure furnishes a total cusp order. Indeed, this appears to be analogous to the foliation-implies-left-orderable direction of the L-space conjecture.

Let $K\subset S^3$ be a knot and let $M$ be the exterior of $K$. Dunfield conjectures, based on numerical data and work of Delman-Roberts, that every non-L-space knot exterior admits a persistently foliar triangulation \cite[\S 1.7]{dunfield-foliar}. Among census manifolds that are knot exteriors, we find unbranchable triangulations only for the exteriors of L-space knots.

As a consequence, we do not shed light on the bi-orderability of knot groups, since it is known that the fundamental group of an $L$-space knot complement is not bi-orderable \cite[Theorem 1.2]{clay-rolfsen}. 

There are precisely 632 hyperbolic L-space knots whose complements admit triangulations with at most nine tetrahedra \cite{dunfield_L_space_knots} and \cite[Theorem~5.1]{baker_kegel_mccoy_search}. At least 325 of these have triangulations that are not branchable. Moreover, for all but one of these 632 knot complements, we find essential triangulations that are not coorientably branchable. The exception is \texttt{o9\_30634}. The exceptional knot itself is also the only one of the 632 that is not a positive braid closure \cite{kegel-baker}.
\end{example}

\subsection{Infinite families}\label{subec:inf_families}
We now build an infinite family to prove \Cref{thm:infiniteegs}. We leverage the fact that branchability can be obstructed locally, in the following sense. Given a triangulation $\cal{T}$, a \emph{subtriangulation} $\cal{T}'$ of $\cal{T}$ is a three-complex obtained by ungluing a subset of faces and deleting a subset of tetrahedra of $\cal{T}$. Branchability is defined in the straightforward way for subtriangulations. Moreover, a branching restricts to a branching of a subtriangulation, so we deduce:

\begin{lemma}\label{lem:subtri}
    Suppose that $\cal{T}$ has a subtriangulation that is not branchable. Then $\cal{T}$ is not branchable. \qed 
\end{lemma}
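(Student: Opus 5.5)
The plan is to prove the contrapositive: from a branching of $\cal{T}$ we produce a branching of the subtriangulation $\cal{T}'$ by restriction. So suppose $\cal{T}$ admits a branching $B$. A branching is a choice, for each vertex of $D(\cal{T})$, of one of the twelve smoothings of the dual spine of the corresponding tetrahedron, subject to the requirement that the smoothings agree across every glued face. Equivalently, on each face of a tetrahedron the smoothing induces a train track pointing to a long edge, and the tracks on two glued faces must coincide.

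Recall that $\cal{T}'$ is obtained from $\cal{T}$ by deleting some tetrahedra and ungluing some face pairs. Its tetrahedra therefore form a subset of those of $\cal{T}$, and its face pairings form a subset of the face pairings of $\cal{T}$ between surviving tetrahedra. Define $B'$ by giving each tetrahedron of $\cal{T}'$ the same smoothing that $B$ assigns to it. We then check the compatibility condition for $B'$. Every face pairing of $\cal{T}'$ is a face pairing of $\cal{T}$, so the two smoothings already agree across it by the corresponding property of $B$. Unglued faces impose no condition at all, since they are simply boundary of the subtriangulation. Hence $B'$ is a branching of $\cal{T}'$, and $\cal{T}'$ is branchable.

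The only point needing care is the meaning of ``branchable'' for a subtriangulation, which the paper describes as the straightforward extension. I take it to mean exactly a local smoothing in each tetrahedron, agreeing across the faces that remain glued, with no condition on free faces. Under that reading the restriction argument above is immediate, and I do not expect any real obstacle; taking the contrapositive gives the lemma.
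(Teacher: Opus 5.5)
Your proposal is correct and is the paper's argument: the paper justifies the lemma by the single remark that a branching of $\cal{T}$ restricts to a branching of any subtriangulation, and your contrapositive with the check that surviving face pairings are a subset of the original ones spells this out. Your reading of ``branchable'' for a subtriangulation (a smoothing of each tetrahedron, agreeing only across faces that remain glued) is the one the paper intends.
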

To build infinitely many examples, we build infinitely many triangulations that contain a common unbranchable subtriangulation. We use an operation that is a generalisation of Schleimer-Segerman's unpublished \emph{veering Dehn surgery} \cite{ss-surgery}. In its simplest form, the operation takes as input a veering triangulation with a M\"obius band face, pops open a face, and inserts veering tetrahedra. The construction is implemented in the Veering codebase \cite{Veering}. The operation was generalised and interpreted as an operation on \emph{almost-veering} branched surfaces by Tsang \cite[\S 3]{tsang}. We generalise it to taut triangulations and describe an associated drilling operation. The construction requires some basic familiarity with \emph{layered solid tori} \cite[\S 4]{layered_ts}. 

Let $\cal{T}$ be taut. Let $e$ be an edge of $\cal{T}$. Consider the pairs $(f,e_i)$ where $f$ is a face of $\cal{T}$ and $e_i$ is an edge of $f$ that is identified to $e$. The $\pi$-edges at $e$ partition these pairs into two fans.
\begin{defn}
Let $\cal{T}$ be a taut ideal triangulation. Let $f$ be a face of $\cal{T}$ that glues to itself to form a M\"obius band. Call the two identified edges of $f$ the \emph{band edges}. Call the remaining edge the \emph{base}. Suppose that the face $f$ further satisfies the following two conditions.
    \begin{itemize}
        \item Sharpness: Let $e$ be the edge of $\cal{T}$ where the band edges of $f$ meet. We impose that the band edges of $f$ are in the same fan of $e$.  
        
        \item Distinct $\pi$-edges: For each tetrahedron $t$ on either side of $f$, the face gluing from $t$ to $f$ identifies a $\pi$-edge of $t$ to an edge of $f$. We impose that each band edge of $f$ is the image of one of these $\pi$-edges.
    \end{itemize}
    We say that $f$ is \emph{surgery admissible}. 
\end{defn}
    For example, a M\"obius band face of a (transverse) veering triangulation is always surgery admissible.

\begin{con}[Taut M\"obius surgery]\label{con:mobius_surgery}
     
    Let $\cal{T}$ be a taut ideal triangulation and $f$ a surgery admissible face of $\cal{T}$. Blow up the face $f$ into a triangular pillow: this gives a solid torus with ideal vertices. The boundary is a punctured torus tiled by two copies of the original triangle. Now remove the solid torus, and attach a new solid torus formed by a single tetrahedron $t$ with a self-gluing. See \Cref{fig:mobius_surgery}. Consider the angle structure after ungluing. The band edge blows up into two edges. Since $f$ is surgery admissible, one of the two blown-up band edges has no $\pi$-edges, while the other has two. So, we endow $t$ with a taut structure that gives the former edge two $\pi$-edges. 
    
    Attaching $t$ gives an oriented triangulation $\cal{T}_1$ with a taut angle structure. This implies that  $\cal{T}_1$ is a taut ideal triangulation of a compact three-manifold with torus boundary. 
    %

    \begin{figure}[h]
        \centering
        \includegraphics[width=0.7\linewidth]{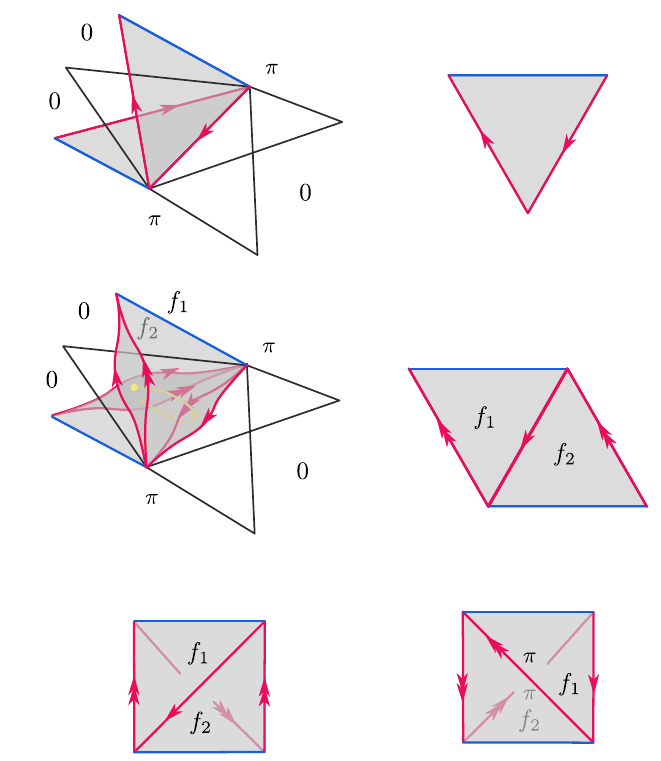}
        \caption[The taut M\"obius surgery operation.]{The taut M\"obius surgery operation. Above: an admissible face for the surgery. Middle: The picture on the universal cover of the solid torus obtained after blowing up the triangle. The yellow arc is a core longitude. Below left: the single tetrahedron that we attach to the torus boundary. Below right: the tetrahedron rotated into position as a transverse taut tetrahedron with coorientation pointing out of the page.}
        \label{fig:mobius_surgery}
    \end{figure}
    
    Topologically, we have drilled a neighbourhood of the core of the M\"obius band and attached a solid torus: this is a Dehn surgery operation on the core curve of $f$. 
    
    The attached solid torus has a M\"obius band face at its core. This face satisfies the surgery criteria above. Thus, we can iterate the surgery operation to obtain a taut triangulation $\cal{T}_n$. After the first surgery, the newly added tetrahedron opens the M\"obius face and modifies the filling slope by adding twice the core of the band. See \Cref{fig:iter_surg}. So, the manifolds $M_n$ are obtained by Dehn surgery on the original manifold, with slopes increasing in magnitude as $n\to\infty$. 

    \begin{figure}[h]
        \centering
        \includegraphics[width=0.7\linewidth]{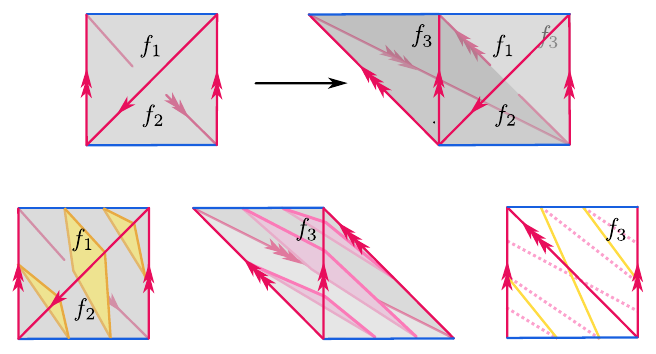}
        \caption[Iterated taut M\"obius surgery.]{One step into the iterated taut M\"obius surgery operation. Top: the newly layered tetrahedron. Below: the meridians in the first and second iterations as normal disks. Bottom right: the two filling slopes compared on the torus boundary obtained by opening up the first surgery's M\"obius band core.}
        \label{fig:iter_surg}
    \end{figure}
    
    It is also useful to build a taut triangulation of the surgery parent of the family. A \emph{drilled solid torus} is given by drilling a core longitude from a solid torus. That is, it is a copy of $\mathbb{T}^2\times I$ with one boundary component ideal. There is a two-tetrahedron ideal triangulation of a drilled solid torus with two boundary faces. After the pillow blow-up step of the surgery construction, we attach this solid torus to the two-triangle boundary to obtain a taut triangulation $\cal{T}^{\circ}$ of a manifold $M^{\circ}$ given by drilling the core of the face.
\end{con}

\begin{thm}\label{thm:infiniteegs}
     There are infinitely many cusped hyperbolic three-manifolds that do not admit veering triangulations. 
\end{thm}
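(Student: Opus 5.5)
The plan is to use the iterated taut Möbius surgery of \Cref{con:mobius_surgery} to produce infinitely many essential triangulations that all contain one fixed unbranchable subtriangulation. We then combine \Cref{lem:subtri} with \Cref{thm:smoothcriterion}.

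\textbf{Step 1: a seed triangulation.} We choose a taut ideal triangulation $\cal{T}$ of a cusped hyperbolic manifold with two properties. First, $\cal{T}$ has a surgery admissible Möbius band face $f$. Second, the subtriangulation $\cal{T}'$ consisting of the tetrahedra away from the blown-up pillow at $f$ is not branchable. Concretely, $\cal{T}'$ is obtained by ungluing the two sides of $f$ and keeping all the original tetrahedra. We would find such a pair $(\cal{T},f)$ by a computer search over taut triangulations in the census, using the depth-first branching check of \Cref{subsec:census}. The check is run on the subtriangulation with $f$ unglued.

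\textbf{Step 2: the family.} Iterating the construction gives taut triangulations $\cal{T}_n$ of manifolds $M_n$. Each $\cal{T}_n$ contains $\cal{T}'$ as a subtriangulation, because the surgery only modifies the triangulation inside the solid torus attached along the two copies of $f$. Taut triangulations are essential by \cite[Theorem 16]{edge_essential}. \Cref{lem:subtri} shows that $\cal{T}_n$ is not branchable, so \Cref{thm:smoothcriterion} shows that $M_n$ admits no veering triangulation.

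\textbf{Step 3: hyperbolicity and distinctness.} Each $M_n$ is obtained from the drilled manifold $M^{\circ}$ by Dehn filling the new cusp along slopes whose lengths grow without bound: each step adds twice the core of the band, as in \Cref{fig:iter_surg}. So we first need $M^{\circ}$ to be hyperbolic. We would verify this directly from the taut triangulation $\cal{T}^{\circ}$, for instance by having SnapPy certify a hyperbolic structure. Thurston's hyperbolic Dehn surgery theorem then makes all but finitely many $M_n$ hyperbolic. Their volumes increase strictly to $\mathrm{vol}(M^{\circ})$, so infinitely many of them are pairwise distinct.

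\textbf{Main obstacle.} The main difficulty is Step 1: finding a seed where the obstruction survives the ungluing of $f$. Ungluing a face removes constraints, so an unbranchable triangulation can become branchable once $f$ is unglued. I would first search small taut census triangulations that are unbranchable and have Möbius faces, and test the unglued subtriangulation directly. A second point needs care: the slopes in Step 2 must really be distinct and go to infinity, which comes from tracking the meridian through the layered solid torus.
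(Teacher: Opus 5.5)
Your proposal is correct and follows the paper's proof step for step. The paper's seed is the census manifold \texttt{m338} with a surgery admissible M\"obius face whose unglued subtriangulation (isoSig \texttt{fLfIkadedenxqnw}) is unbranchable, and it identifies the drilled parent $M^{\circ}$ as the hyperbolic census manifold \texttt{v3140} before applying Thurston's hyperbolic Dehn surgery theorem as you do. One small correction: Thurston's theorem gives $\mathrm{vol}(M_n) < \mathrm{vol}(M^{\circ})$ with $\mathrm{vol}(M_n) \to \mathrm{vol}(M^{\circ})$, not strict monotonic increase, but this still yields infinitely many pairwise distinct $M_n$.
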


\begin{proof}
    Let $M$ be the census manifold \texttt{m338}. It has a taut triangulation $\cal{T}$ and a M\"obius band triangle $f$ such that:
    \begin{itemize}
        \item The face $f$ is surgery admissible with respect to $\cal{T}$.
        
        \item The subtriangulation $\cal{T}'$ of $\cal{T}$ obtained by ungluing $f$ is not branchable.
    \end{itemize}
    The taut triangulation of $M$ is listed in \Cref{table:surgery}. The subtriangulation is given by the Regina isoSig \texttt{fLfIkadedenxqnw}.
    
    Set $\cal{T}_0:=\cal{T}$. Apply the repeated surgery operation of \Cref{con:mobius_surgery} on $f$ to obtain a sequence of taut triangulations $(\cal{T}_n)$ of manifolds $(M_n)$. 
    
    The triangulation $\cal{T}_n$ contains $\cal{T}'$ as a subtriangulation. By \Cref{lem:subtri}, $\cal{T}_n$ is not branchable. Since taut triangulations are essential, $M_n$ admits no veering triangulations by \Cref{thm:smoothcriterion}. 

    It remains to show that the manifolds $M_n$ are hyperbolic and distinct. We build the surgery parent with the drilling variant of \Cref{con:mobius_surgery} and find that it is the hyperbolic census manifold \texttt{v3140}. Since the magnitudes of the filling slopes that produce $M_n$ increase as $n\to\infty$, all but finitely many $M_n$ are hyperbolic, and infinitely many of them are pairwise distinct \cite[Theorem 5.8.2, \S 5.11]{thurston-notes}.
\end{proof}
 We record in \Cref{table:surgery} the first few triangulations in the sequence from the proof of \Cref{thm:infiniteegs}. 

\begin{table}[h]
\begin{tabular}{|l| l l l|}
\hline
$n$ & \textbf{Tetrahedra} & \textbf{SnapPy name} & \textbf{taut isoSig} \\
\hline
$0$ & {5} & \texttt{m338} & \tiny\texttt{fLLQcacdedenbxqnw\_12200} \\
\hline
$1$ & {6} & \texttt{s909} & \tiny\texttt{gLvQQadfedefjqxawjr\_100122} \\
\hline
$2$ & {7} & \texttt{v2790} & \tiny\texttt{hLvPQkccgfeefghrqhawkk\_1222001} \\
\hline
$3$ & {8} & \texttt{t07102} & \tiny\texttt{iLAvPQcbcbgfhfghxxnaxqrnw\_01100221} \\
\hline
$4$ & {9} & \texttt{o9\_16725} & \tiny\texttt{jLAwvQQbcbdhigghihhwhhxarsj\_211211002} \\
\hline
$5$ & {10} & \texttt{o10\_037294} & \tiny\texttt{kLAMvLQkbcbdeihjhijxxnxxaxqrnw\_0110100221} \\
\hline
$\infty$ & {7} & \texttt{v3140} & \tiny\texttt{hLvAQkadfdgggfjqxsjnbr\_2110202} \\
\hline
\end{tabular}

\caption[A sequence of manifolds without veering triangulations.]{The first few manifolds in the infinite sequence and the surgery parent constructed in the proof of \Cref{thm:infiniteegs}. The triangulations in the sequence are not branchable. The index $n$ indicates the iteration of the surgery operation, with $n=\infty$ denoting the surgery parent. The taut structure is specified by the taut isoSig; see \cite{Veering} for details on its meaning.}
\label{table:surgery}
\end{table}

\begin{rem}
    An unbranchable subtriangulation may appear to be an unassuming combinatorial coincidence, but it is a topological obstruction to the presence of veering triangulations. For example, it cannot be an ideal three-ball, as follows. Suppose that $\cal{T}$ is a triangulation of a three-ball with ideal vertices. Suppose that no edge of $\cal{T}$ has the same vertex at its endpoints. Then we can choose any total order on the vertices and apply \Cref{con:edge_o_to_branch} to obtain a branching on $\cal{T}$. So, the obstruction is topological, and in particular, it can disappear under finite covers.
    
    We use Regina's tricensus feature to determine that the smallest unbranchable `gadget' that can appear as a proper subtriangulation of an essential oriented ideal triangulation has at least four tetrahedra. The smallest gadgets we find among our examples have five tetrahedra. 
\end{rem}

\printbibliography

\end{document}